\edef\pgfkeysatcode{\the\catcode`\@}
\def\pgfrcsincluded{}
\edef\pgfrcsatcode{\the\catcode`\@}
\newif\ifpgfutil@format@is@latex
\newif\ifpgfutil@format@is@plain
\newif\ifpgfutil@format@is@context
\def\pgfutil@ifundefined#1{%
  \expandafter\ifx\csname#1\endcsname\relax
    \expandafter\pgfutil@firstoftwo
  \else
    \expandafter\pgfutil@secondoftwo
  \fi}
\def\pgfutil@IfUndefined#1{%
  \begingroup\expandafter\expandafter\expandafter\endgroup
  \expandafter\ifx\csname#1\endcsname\relax
    \expandafter\pgfutil@firstoftwo
  \else
    \expandafter\pgfutil@secondoftwo
  \fi
}
\long\def\pgfutil@firstofone#1{#1}
\long\def\pgfutil@firstoftwo#1#2{#1}
\long\def\pgfutil@secondoftwo#1#2{#2}
\def\pgfutil@empty{}
\def\pgfutil@gobble@until@relax#1\relax{}
\def\pgfutil@gobble#1{}
\def\pgfutil@gobbletwo#1#2{}
\def\pgfutil@namedef#1{\expandafter\def\csname #1\endcsname}
\def\pgfutil@namelet#1{\expandafter\pgfutil@@namelet\csname#1\endcsname}
\def\pgfutil@@namelet#1#2{\expandafter\let\expandafter#1\csname#2\endcsname}
\long\def\pgfutil@g@addto@macro#1#2{%
  \begingroup
    \pgfutil@toks@\expandafter{#1#2}%
    \xdef#1{\the\pgfutil@toks@}%
 \endgroup}
\newif\ifpgfutil@tempswa
\newif\ifpgfutil@tempswb
\long\def\pgfutil@ifnextchar#1#2#3{%
  \let\pgfutil@reserved@d=#1%
  \def\pgfutil@reserved@a{#2}%
  \def\pgfutil@reserved@b{#3}%
  \futurelet\pgfutil@let@token\pgfutil@ifnch}
\def\pgfutil@ifnch{%
  \ifx\pgfutil@let@token\pgfutil@sptoken
    \let\pgfutil@reserved@c\pgfutil@xifnch
  \else
    \ifx\pgfutil@let@token\pgfutil@reserved@d
      \let\pgfutil@reserved@c\pgfutil@reserved@a
    \else
      \let\pgfutil@reserved@c\pgfutil@reserved@b
    \fi
  \fi
  \pgfutil@reserved@c}
{%
  \def\:{\global\let\pgfutil@sptoken= } \:
  \def\:{\pgfutil@xifnch} \expandafter\gdef\: {\futurelet\pgfutil@let@token\pgfutil@ifnch}
}
\newif\ifpgfutil@in@
\def\pgfutil@in@#1#2{%
 \def\pgfutil@in@@##1#1##2##3\pgfutil@in@@{%
  \ifx\pgfutil@in@##2\pgfutil@in@false\else\pgfutil@in@true\fi}%
 \pgfutil@in@@#2#1\pgfutil@in@\pgfutil@in@@}
\def\pgfutil@nnil{\pgfutil@nil}
\def\pgfutil@fornoop#1\@@#2#3{}
\long\def\pgfutil@for#1:=#2\do#3{%
  \expandafter\def\expandafter\pgfutil@fortmp\expandafter{#2}%
  \ifx\pgfutil@fortmp\pgfutil@empty \else
    \expandafter\pgfutil@forloop#2,\pgfutil@nil,\pgfutil@nil\@@#1{#3}\fi}
\long\def\pgfutil@forloop#1,#2,#3\@@#4#5{\def#4{#1}\ifx #4\pgfutil@nnil \else
       #5\def#4{#2}\ifx #4\pgfutil@nnil \else#5\pgfutil@iforloop #3\@@#4{#5}\fi\fi}
\long\def\pgfutil@iforloop#1,#2\@@#3#4{\def#3{#1}\ifx #3\pgfutil@nnil
       \expandafter\pgfutil@fornoop \else
      #4\relax\expandafter\pgfutil@iforloop\fi#2\@@#3{#4}}
\def\pgfutil@tfor#1:={\pgfutil@tf@r#1 }
\long\def\pgfutil@tf@r#1#2\do#3{\def\pgfutil@fortmp{#2}\ifx\pgfutil@fortmp\pgfutil@space\else
    \pgfutil@tforloop#2\pgfutil@nil\pgfutil@nil\@@#1{#3}\fi}
\long\def\pgfutil@tforloop#1#2\@@#3#4{\def#3{#1}\ifx #3\pgfutil@nnil
       \expandafter\pgfutil@fornoop \else
      #4\relax\expandafter\pgfutil@tforloop\fi#2\@@#3{#4}}
\def\pgfutil@space{ }
\def\pgfutil@IfFileExists#1#2#3{%
  \openin\pgfutil@inputcheck=#1 %
  \ifeof\pgfutil@inputcheck
     #3\relax
  \else
    #2\relax
  \fi
  \closein\pgfutil@inputcheck}
\def\pgfutil@InputIfFileExists#1#2#3{\pgfutil@IfFileExists{#1}{\input #1\relax#2}{#3}}%
\def\pgfutil@loop#1\pgfutil@repeat{\def\pgfutil@body{#1}\pgfutil@iterate}
\def\pgfutil@iterate{\pgfutil@body \let\pgfutil@next\pgfutil@iterate \else\let\pgfutil@next\relax\fi \pgfutil@next}
\let\pgfutil@repeat=\fi 
\let\pgfutil@aux@read@hook=\relax
\newtoks\pgfutil@everybye
\def\pgfutil@addpdfresource@extgs#1{\pgf@sys@addpdfresource@extgs@plain{#1}}
\def\pgfutil@addpdfresource@colorspaces#1{\pgf@sys@addpdfresource@colorspaces@plain{#1}}
\def\pgfutil@addpdfresource@patterns#1{\pgf@sys@addpdfresource@patterns@plain{#1}}
\def\pgfutil@setuppdfresources{\pgf@sys@setuppdfresources@plain}
\let\pgfutil@insertatbegincurrentpage=\relax
\def\pgfutil@raggedright{\rightskip\z@ plus2em \spaceskip.3333em \xspaceskip.5em\relax}
\def\pgfutil@raggedleft{\leftskip\z@ plus2em \rightskip\z@ \spaceskip.3333em \xspaceskip.5em\parfillskip0pt\relax}
\def\pgfutil@packageerror#1#2#3{\errhelp{#3}\errmessage{Package #1 Error: #2}}
\def\pgfutil@packagewarning#1#2{\immediate\write17{Package #1: Warning! #2.}}
\def\pgferror#1{\pgfutil@packageerror{pgf}{#1}{}}
\def\pgfwarning#1{\pgfutil@packagewarning{pgf}{#1}}
\def\usepgflibrary{\pgfutil@ifnextchar[{\use@pgflibrary}{\use@@pgflibrary}}
\def\use@pgflibrary[#1]{\use@@pgflibrary{#1}}
\def\use@@pgflibrary#1{%
  \edef\pgf@list{#1}%
  \pgfutil@for\pgf@temp:=\pgf@list\do{%
    \expandafter\pgfkeys@spdef\expandafter\pgf@temp\expandafter{\pgf@temp}%
    \ifx\pgf@temp\pgfutil@empty
    \else
      \expandafter\ifx\csname pgf@library@\pgf@temp @loaded\endcsname\relax%
      \expandafter\let\csname pgf@library@\pgf@temp @loaded\endcsname=\pgfutil@empty%
      \expandafter\edef\csname pgf@library@#1@atcode\endcsname{\the\catcode`\@}
      \expandafter\edef\csname pgf@library@#1@barcode\endcsname{\the\catcode`\|}
      \expandafter\edef\csname pgf@library@#1@dollarcode\endcsname{\the\catcode`\$}
      \catcode`\@=11
      \catcode`\|=12
      \catcode`\$=3
      \pgfutil@InputIfFileExists{pgflibrary\pgf@temp.code.tex}{}{%
          \pgferror{I did not find the pgf library
            '\pgf@temp'. I looked for the file named
            pgflibrary\pgf@temp.code.tex, but could not find it in in
            the current texmf trees.} 
        }%
      \catcode`\@=\csname pgf@library@#1@atcode\endcsname
      \catcode`\|=\csname pgf@library@#1@barcode\endcsname
      \catcode`\$=\csname pgf@library@#1@dollarcode\endcsname
      \fi%
    \fi
  }%
}
\def\usepgfmodule{\pgfutil@ifnextchar[{\use@pgfmodule}{\use@@pgfmodule}}
\def\use@pgfmodule[#1]{\use@@pgfmodule{#1}}
\def\use@@pgfmodule#1{%
  \edef\pgf@list{#1}%
  \pgfutil@for\pgf@temp:=\pgf@list\do{%
    \expandafter\ifx\csname pgf@module@\pgf@temp @loaded\endcsname\relax%
      \expandafter\let\csname pgf@module@\pgf@temp @loaded\endcsname=\pgfutil@empty%
      \expandafter\edef\csname pgf@module@#1@atcode\endcsname{\the\catcode`\@}
      \expandafter\edef\csname pgf@module@#1@barcode\endcsname{\the\catcode`\|}
      \expandafter\edef\csname pgf@module@#1@dollarcode\endcsname{\the\catcode`\$}
      \catcode`\@=11
      \catcode`\|=12
      \catcode`\$=3
      \input pgfmodule\pgf@temp.code.tex
      \catcode`\@=\csname pgf@module@#1@atcode\endcsname
      \catcode`\|=\csname pgf@module@#1@barcode\endcsname
      \catcode`\$=\csname pgf@module@#1@dollarcode\endcsname
    \fi%
  }%
}
\def\pgfutilensuremath#1{%
  \ifmmode#1\else$#1$\fi
}
	\edef\pgf@loc@TMPa{"}%
	\gdef\pgfutilpreparefilename#1{%
		\begingroup
			\ifnum\the\catcode`\"=13
				\pgfutilconvertdcolon
			\fi
			\xdef\pgf@temp{#1}%
		\endgroup
		\expandafter\pgfutil@in@\expandafter"\expandafter{\pgf@temp}%
		\ifpgfutil@in@
			\def\pgf@loc@TMPa{\pgfutilstrreplace{"}{}}%
			\expandafter\pgf@loc@TMPa\expandafter{\pgf@temp}%
			\edef\pgfretval{"\pgfretval"}
			\let\pgfretvalquoted=\pgfretval
		\else
			\let\pgfretval=\pgf@temp
			\edef\pgfretvalquoted{"\pgfretval"}%
		\fi
	}%
	\xdef\pgfutilconvertdcolon{%
		\noexpand\def\noexpand"{\pgf@loc@TMPa}%
	}%
\long\def\pgfutilstrreplace#1#2#3{%
	\def\pgfretval{}%
	\long\def\pgfutil@search@and@replace@@##1#1##2\pgf@EOI{%
		\expandafter\def\expandafter\pgfretval\expandafter{\pgfretval ##1#2}%
		\pgfutil@search@and@replace@loop{#1}{##2}%
	}%
	\pgfutil@search@and@replace@loop{#1}{#3}%
}
\long\def\pgfutil@search@and@replace@loop#1#2{%
	\pgfutil@in@{#1}{#2}%
	\ifpgfutil@in@
		\def\pgf@loc@TMPa{\pgfutil@search@and@replace@@ #2\pgf@EOI}%
	\else
		\expandafter\def\expandafter\pgfretval\expandafter{\pgfretval #2}%
		\let\pgf@loc@TMPa=\relax
	\fi
	\pgf@loc@TMPa
}%
\def\pgfutilsolvetwotwoleq#1#2{%
	\begingroup
		\dimendef\aa=0
		\dimendef\ab=1
		\dimendef\ba=2
		\dimendef\bb=3
		\dimendef\ra=4
		\dimendef\rb=5
		\dimendef\tmpa=6
		\dimendef\tmpb=7
		\edef\pgf@temp{#1}%
		\expandafter\pgfutilsolvetwotwoleq@A\pgf@temp
		\edef\pgf@temp{#2}%
		\expandafter\pgfutilsolvetwotwoleq@r\pgf@temp
		\pgfutilsolvetwotwoleq@ifislarger\aa\ba{%
			\def\Pa{a}%
			\def\Pb{b}%
		}{%
			\def\Pa{b}%
			\def\Pb{a}%
		}%
		\pgfmathreciprocal@
			{\csname m\Pa a\endcsname}%
		\let\pivot=\pgfmathresult
		%
		\csname \Pb a\endcsname=\pivot\csname \Pb a\endcsname
		\edef\factor{\expandafter\pgf@sys@tonumber\csname \Pb a\endcsname}%
		%
		\tmpa=-\factor\csname \Pa b\endcsname
		\advance\csname \Pb b\endcsname by\tmpa
		%
		\tmpa=-\factor\csname r\Pa\endcsname
		\advance\csname r\Pb\endcsname by\tmpa
		%
		\pgfmathdivide@
			{\expandafter\pgf@sys@tonumber\csname r\Pb\endcsname}
			{\expandafter\pgf@sys@tonumber\csname \Pb b\endcsname}%
		\expandafter\let\csname pgfmathresult\Pb\endcsname=\pgfmathresult
		%
		\tmpa=\csname pgfmathresult\Pb\endcsname\csname \Pa b\endcsname
		\advance\csname r\Pa\endcsname by-\tmpa
		%
		\tmpa=\pivot\csname r\Pa\endcsname
		\expandafter\edef\csname pgfmathresult\Pa\endcsname{\pgf@sys@tonumber\tmpa}%
		\edef\pgfmathresult{%
			{\csname pgfmathresult\Pa\endcsname}%
			{\csname pgfmathresult\Pb\endcsname}%
		}%
		\pgfmath@smuggleone\pgfmathresult
	\endgroup
}%
\def\pgfutilsolvetwotwoleq@ifislarger#1#2#3#4{%
	\tmpa=#1
	\ifdim\tmpa<0pt
		\multiply\tmpa by-1
	\fi
	\tmpb=#2
	\ifdim\tmpb<0pt
		\multiply\tmpb by-1
	\fi
	\ifdim\tmpa>\tmpb
		#3%
	\else
		#4%
	\fi
}%
\def\pgfutilsolvetwotwoleq@A#1#2#3#4{%
	\def\maa{#1}\def\mab{#2}%
	\def\mba{#3}\def\mbb{#3}%
	\aa=#1pt \ab=#2pt
	\ba=#3pt \bb=#4pt
}
\def\pgfutilsolvetwotwoleq@r#1#2{%
	\ra=#1pt \rb=#2pt
}%
\let\pgfutil@write=\write
\let\pgfutil@read=\read
\let\pgfutil@protect\relax
\def\pgfutil@check@rerun#1#2{}
\newdimen\pgfutil@tempdima
\newdimen\pgfutil@tempdimb
\let\pgfutil@ifluatex\iffalse
\let\csname pgfutil@ifluatex\expandafter\endcsname
  \let\pgfutil@directlua\directlua
  \def\pgfutil@directlua#1{}
  \def\pgfutil@luaescapestring#1{}
\def\pgfutil@advancestringcounter#1{%
	\begingroup
		\c@pgf@counta=#1\relax
		\advance\c@pgf@counta by1
		\edef#1{\the\c@pgf@counta}%
		\pgfmath@smuggleone#1%
	\endgroup
}%
\def\pgfapplistnewempty#1{%
	\expandafter\let\csname pgfapp@#1\endcsname=\pgfutil@empty
	\expandafter\let\csname pgfapp@#1@smallbuf\endcsname=\pgfutil@empty
	\expandafter\let\csname pgfapp@#1@bigbuf\endcsname=\pgfutil@empty
	\expandafter\def\csname pgfapp@#1@smallbuf@c\endcsname{0}%
	\expandafter\def\csname pgfapp@#1@bigbuf@c\endcsname{0}%
}%
\long\def\pgfapplistpushback#1\to#2{%
	\begingroup
		\c@pgf@counta=\csname pgfapp@#2@smallbuf@c\endcsname\relax
		\advance\c@pgf@counta by1
		\xdef\pgf@glob@TMPa{\the\c@pgf@counta}%
	\endgroup
	\expandafter\let\csname pgfapp@#2@smallbuf@c\endcsname=\pgf@glob@TMPa
	\ifnum\csname pgfapp@#2@smallbuf@c\endcsname<40
		\t@pgf@toka=\expandafter\expandafter\expandafter{\csname pgfapp@#2@smallbuf\endcsname#1}%
		\expandafter\edef\csname pgfapp@#2@smallbuf\endcsname{\the\t@pgf@toka}%
	\else
		\pgfapplistpushback@smallbufoverfl{#1}{#2}%
	\fi
}%
\long\def\pgfapplistpushback@smallbufoverfl#1#2{%
	\begingroup
		\c@pgf@counta=\csname pgfapp@#2@bigbuf@c\endcsname\relax
		\advance\c@pgf@counta by1
		\xdef\pgf@glob@TMPa{\the\c@pgf@counta}%
	\endgroup
	\expandafter\let\csname pgfapp@#2@bigbuf@c\endcsname=\pgf@glob@TMPa
	\ifnum\csname pgfapp@#2@bigbuf@c\endcsname<30
		\t@pgf@toka=\expandafter\expandafter\expandafter{\csname pgfapp@#2@bigbuf\endcsname}%
		\t@pgf@tokb=\expandafter\expandafter\expandafter{\csname pgfapp@#2@smallbuf\endcsname#1}%
		\expandafter\edef\csname pgfapp@#2@bigbuf\endcsname{\the\t@pgf@toka\the\t@pgf@tokb}%
		\expandafter\let\csname pgfapp@#2@smallbuf\endcsname=\pgfutil@empty
		\expandafter\def\csname pgfapp@#2@smallbuf@c\endcsname{0}%
	\else%
		\t@pgf@toka=\expandafter\expandafter\expandafter{\csname pgfapp@#2\endcsname}%
		\t@pgf@tokb=\expandafter\expandafter\expandafter{\csname pgfapp@#2@bigbuf\endcsname}%
		\t@pgf@tokc=\expandafter\expandafter\expandafter{\csname pgfapp@#2@smallbuf\endcsname#1}%
		\expandafter\edef\csname pgfapp@#2\endcsname{\the\t@pgf@toka\the\t@pgf@tokb\the\t@pgf@tokc}%
		\expandafter\let\csname pgfapp@#2@smallbuf\endcsname=\pgfutil@empty
		\expandafter\def\csname pgfapp@#2@smallbuf@c\endcsname{0}%
		\expandafter\let\csname pgfapp@#2@bigbuf\endcsname=\pgfutil@empty
		\expandafter\def\csname pgfapp@#2@bigbuf@c\endcsname{0}%
	\fi%
}%
\def\pgfapplist@flushbuffers#1{%
	\t@pgf@toka=\expandafter\expandafter\expandafter{\csname pgfapp@#1\endcsname}%
	\t@pgf@tokb=\expandafter\expandafter\expandafter{\csname pgfapp@#1@bigbuf\endcsname}%
	\t@pgf@tokc=\expandafter\expandafter\expandafter{\csname pgfapp@#1@smallbuf\endcsname}%
	\expandafter\edef\csname pgfapp@#1\endcsname{\the\t@pgf@toka\the\t@pgf@tokb\the\t@pgf@tokc}%
	\expandafter\let\csname pgfapp@#1@smallbuf\endcsname=\pgfutil@empty
	\expandafter\def\csname pgfapp@#1@smallbuf@c\endcsname{0}%
	\expandafter\let\csname pgfapp@#1@bigbuf\endcsname=\pgfutil@empty
	\expandafter\def\csname pgfapp@#1@bigbuf@c\endcsname{0}%
}%
\def\pgfapplistlet#1=#2{%
	\pgfapplist@flushbuffers{#2}%
	\expandafter\let\expandafter#1\csname pgfapp@#2\endcsname
}%
\def\pgfprependlistnewempty#1{%
	\expandafter\let\csname pgfpPRP@#1\endcsname=\pgfutil@empty
	\expandafter\let\csname pgfpPRP@#1@smallbuf\endcsname=\pgfutil@empty
	\expandafter\let\csname pgfpPRP@#1@bigbuf\endcsname=\pgfutil@empty
	\expandafter\def\csname pgfpPRP@#1@smallbuf@c\endcsname{0}%
	\expandafter\def\csname pgfpPRP@#1@bigbuf@c\endcsname{0}%
}%
\long\def\pgfprependlistpushfront#1\to#2{%
	\begingroup
		\c@pgf@counta=\csname pgfpPRP@#2@smallbuf@c\endcsname\relax
		\advance\c@pgf@counta by1
		\xdef\pgf@glob@TMPa{\the\c@pgf@counta}%
	\endgroup
	\expandafter\let\csname pgfpPRP@#2@smallbuf@c\endcsname=\pgf@glob@TMPa
	\ifnum\csname pgfpPRP@#2@smallbuf@c\endcsname<40
		\t@pgf@toka=\expandafter\expandafter\expandafter{\csname pgfpPRP@#2@smallbuf\endcsname}%
		\t@pgf@tokb={#1}%
		\expandafter\edef\csname pgfpPRP@#2@smallbuf\endcsname{\the\t@pgf@tokb\the\t@pgf@toka}%
	\else
		\pgfprependlistpushfront@smallbufoverfl{#1}{#2}%
	\fi
}%
\long\def\pgfprependlistpushfront@smallbufoverfl#1#2{%
	\begingroup
		\c@pgf@counta=\csname pgfpPRP@#2@bigbuf@c\endcsname\relax
		\advance\c@pgf@counta by1
		\xdef\pgf@glob@TMPa{\the\c@pgf@counta}%
	\endgroup
	\expandafter\let\csname pgfpPRP@#2@bigbuf@c\endcsname=\pgf@glob@TMPa
	\ifnum\csname pgfpPRP@#2@bigbuf@c\endcsname<30
		\t@pgf@toka=\expandafter\expandafter\expandafter{\csname pgfpPRP@#2@bigbuf\endcsname}%
		\t@pgf@tokb=\expandafter\expandafter\expandafter{\csname pgfpPRP@#2@smallbuf\endcsname}%
		\t@pgf@tokc={#1}%
		\expandafter\edef\csname pgfpPRP@#2@bigbuf\endcsname{\the\t@pgf@tokc\the\t@pgf@tokb\the\t@pgf@toka}%
		\expandafter\let\csname pgfpPRP@#2@smallbuf\endcsname=\pgfutil@empty
		\expandafter\def\csname pgfpPRP@#2@smallbuf@c\endcsname{0}%
	\else%
		\pgfprependlist@flushbuffers{#2}%
		\t@pgf@toka=\expandafter\expandafter\expandafter{\csname pgfpPRP@#2\endcsname}%
		\t@pgf@tokb={#1}%
		\expandafter\edef\csname pgfpPRP@#2\endcsname{\the\t@pgf@tokb\the\t@pgf@toka}%
	\fi%
}%
\def\pgfprependlist@flushbuffers#1{%
	\t@pgf@toka=\expandafter\expandafter\expandafter{\csname pgfpPRP@#1\endcsname}%
	\t@pgf@tokb=\expandafter\expandafter\expandafter{\csname pgfpPRP@#1@bigbuf\endcsname}%
	\t@pgf@tokc=\expandafter\expandafter\expandafter{\csname pgfpPRP@#1@smallbuf\endcsname}%
	\expandafter\edef\csname pgfpPRP@#1\endcsname{\the\t@pgf@tokc\the\t@pgf@tokb\the\t@pgf@toka}%
	\expandafter\let\csname pgfpPRP@#1@smallbuf\endcsname=\pgfutil@empty
	\expandafter\def\csname pgfpPRP@#1@smallbuf@c\endcsname{0}%
	\expandafter\let\csname pgfpPRP@#1@bigbuf\endcsname=\pgfutil@empty
	\expandafter\def\csname pgfpPRP@#1@bigbuf@c\endcsname{0}%
}%
\def\pgfprependlistlet#1=#2{%
	\pgfprependlist@flushbuffers{#2}%
	\expandafter\let\expandafter#1\csname pgfpPRP@#2\endcsname
}%
\def\pgfversion{3.0.1a}
\def\pgfrcsloaded{}
\edef\pgfrcsatcode{\the\catcode`\@}
\def\pgf@parseid $#1: #2.#3,v #4 #5/#6/#7 #8${%
  \pgf@parsercsfile$#2/$
  \def\pgf@rcssuffix{#3}
  \def\pgf@rcsrevision{#4}
  \def\pgf@rcsdate{#5/#6/#7}
}
\def\pgf@parsercsfile$#1/#2${
  \def\pgf@temp{#2}
  \ifx\pgf@temp\pgfutil@empty
    \def\pgf@rcsfile{#1}
  \else
    \pgf@parsercsfile$#2$
  \fi}
\def\ProvidesFileRCS{%
  \def\pgfrcs@marshal{\ProvidesFile{\pgf@rcsfile.\pgf@rcssuffix}}%
  \pgfrcs@parserest%
}
\def\ProvidesPackageRCS{%
  \def\pgfrcs@marshal{\ProvidesPackage{\pgf@rcsfile}}%
  \pgfrcs@parserest%
}
\def\pgfrcs@parserest{%
  \def\pgf@rcsadditional{}%
  \afterassignment\pgfrcs@checkforoptional\let\next=}
\def\pgfrcs@checkforoptional{%
  \ifx\next[%
    \let\next=\pgfrcs@getoptional%
  \else
    \let\next=\pgfrcs@package%
  \fi%
  \next}
\def\pgfrcs@getoptional#1] ${%
  \def\pgf@rcsadditional{#1}%
  \pgfrcs@package%
}
\def\pgfrcs@package#1${
  \pgf@parseid $#1$
  \pgfrcs@marshal[\pgf@rcsdate\space\pgf@rcsadditional\space(rcs-revision \pgf@rcsrevision)]
}
\def\ProvidesClassRCS $#1$ [#2]{%
  \pgf@parseid $#1$
  \ProvidesClass{\pgf@rcsfile}[\pgf@rcsdate\space#2\space (rcs-revision \pgf@rcsrevision)]
}
  \def\ProvidesPackage#1[#2]{\wlog{Loading package #1 version #2.}}
  \def\ProvidesFile#1[#2]{\wlog{Loading file #1 version #2.}}
  \let\pgfkeysloaded=\relax
\newif\ifpgfkeys@csname@test%
\def\pgfkeys@ifcsname#1\endcsname#2\else#3\fi{%
  \pgfkeys@csname@testfalse%
  \expandafter\ifx\csname#1\endcsname\relax\else\expandafter\ifx\csname#1\endcsname\@undefined\else\pgfkeys@csname@testtrue\fi\fi%
  \ifpgfkeys@csname@test#2\else#3\fi%
}%
\let\expandafter\pgfkeys@ifcsname\csname ifcsname\endcsname%
\def\pgfkeys@empty{}
  \def\pgfkeys@error#1{\errmessage{Package pgfkeys Error: #1.}}%
  \def\pgfkeys@error#1{\PackageError{pgfkeys}{#1}{}}%
\long\def\pgfkeyssetvalue#1#2{%
  \pgfkeys@temptoks{#2}\expandafter\edef\csname pgfk@#1\endcsname{\the\pgfkeys@temptoks}%
}
\long\def\pgfkeysaddvalue#1#2#3{%
  {%
    \toks0{#2}%
    \pgfkeysifdefined{#1}
    {\pgfkeys@temptoks\expandafter\expandafter\expandafter{\csname pgfk@#1\endcsname}}%
    {\pgfkeys@temptoks{}}%
    \toks1{#3}%
    \xdef\pgfkeys@global@temp{\the\toks0 \the\pgfkeys@temptoks \the\toks1}
  }%
  \pgfkeyslet{#1}\pgfkeys@global@temp%
}
\def\pgfkeyslet#1#2{%
  \expandafter\let\csname pgfk@#1\endcsname#2%
}
\def\pgfkeysgetvalue#1#2{\expandafter\let\expandafter#2\csname pgfk@#1\endcsname}
\def\pgfkeysvalueof#1{\csname pgfk@#1\endcsname}
\long\def\pgfkeysifdefined#1#2#3{\pgfkeys@ifcsname pgfk@#1\endcsname#2\else#3\fi}
\newtoks\pgfkeys@pathtoks
\def\pgfkeyscurrentpath{\the\pgfkeys@pathtoks}
\newtoks\pgfkeys@temptoks
\def\pgfkeys@root{/}
\let\pgfkeysdefaultpath\pgfkeys@root
\def\pgfkeys{\expandafter\pgfkeys@@set\expandafter{\pgfkeysdefaultpath}}%
\long\def\pgfkeys@@set#1#2{%
  \let\pgfkeysdefaultpath\pgfkeys@root%
  \pgfkeys@parse#2,\pgfkeys@mainstop%
  \def\pgfkeysdefaultpath{#1}}
\def\pgfkeys@parse{\futurelet\pgfkeys@possiblerelax\pgfkeys@parse@main}
\def\pgfkeys@parse@main{%
  \ifx\pgfkeys@possiblerelax\pgfkeys@mainstop%
    \expandafter\pgfkeys@cleanup%
  \else%
    \expandafter\pgfkeys@normal%
  \fi%
}
\newif\ifpgfkeys@syntax@handlers
\def\pgfkeys@normal{%
  \ifpgfkeys@syntax@handlers%
    \expandafter\pgfkeys@syntax@handlers%
  \else%
    \expandafter\pgfkeys@@normal%
  \fi%
}
\def\pgfkeys@syntax@handlers{\pgf@keys@utilifnextchar\relax\pgfkeys@syntax@@handlers\pgfkeys@syntax@@handlers}
\def\pgfkeys@syntax@@handlers{\futurelet\pgfkeys@first@char\pgfkeys@syntax@handlers@test}
\def\pgfkeys@syntax@handlers@test{%
  \pgfkeysgetvalue{/handlers/first char syntax/\meaning\pgfkeys@first@char}\pgfkeys@the@handler%
  \ifx\pgfkeys@the@handler\relax%
    \expandafter\pgfkeys@@normal%
  \else%
    \expandafter\pgfkeys@use@handler%
  \fi%
}
\long\def\pgfkeys@use@handler#1,{%
  \pgfkeys@the@handler{#1}%
  \pgfkeys@parse%
}
\long\def\pgfkeys@@normal#1,{%
  \pgfkeys@unpack#1=\pgfkeysnovalue=\pgfkeys@stop%
  \pgfkeys@parse%
}
\def\pgfkeys@cleanup\pgfkeys@mainstop{}
\def\pgfkeys@mainstop{\pgfkeys@mainstop} 
\def\pgfkeys@novalue{} 
\def\pgfkeysnovalue{\pgfkeys@novalue} 
\def\pgfkeysnovalue@text{\pgfkeysnovalue}
\def\pgfkeysvaluerequired{\pgfkeysvaluerequired} 
\long\def\pgfkeys@unpack#1=#2=#3\pgfkeys@stop{%
  \pgfkeys@spdef\pgfkeyscurrentkey{#1}%
  \edef\pgfkeyscurrentkey{\pgfkeyscurrentkey}%
  \ifx\pgfkeyscurrentkey\pgfkeys@empty%
  \else%
    \pgfkeys@add@path@as@needed%
    \pgfkeys@spdef\pgfkeyscurrentvalue{#2}%
    \ifx\pgfkeyscurrentvalue\pgfkeysnovalue@text
      \pgfkeysifdefined{\pgfkeyscurrentkey/.@def}%
      {\pgfkeysgetvalue{\pgfkeyscurrentkey/.@def}{\pgfkeyscurrentvalue}}
      {}
    \fi%
    \ifx\pgfkeyscurrentvalue\pgfkeysvaluerequired%
      \def\pgf@marshal{\pgfkeysvalueof{/errors/value required/.@cmd}}%
      \expandafter\pgf@marshal\expandafter{\pgfkeyscurrentkey}{}\pgfeov%
    \else%
      \pgfkeys@case@one%
    \fi%
  \fi}
\def\pgfkeys@case@one{%
  \pgfkeysifdefined{\pgfkeyscurrentkey/.@cmd}%
  {\pgfkeysgetvalue{\pgfkeyscurrentkey/.@cmd}{\pgfkeys@code}%
   \expandafter\pgfkeys@code\pgfkeyscurrentvalue\pgfeov}
  {\pgfkeys@case@two}%
}
\def\pgfkeys@case@two{%
  \pgfkeysifdefined{\pgfkeyscurrentkey}%
  {\pgfkeys@case@two@extern}%
  {\pgfkeys@case@three}%
}
\def\pgfkeys@case@two@extern{%
  \ifx\pgfkeyscurrentvalue\pgfkeysnovalue@text%
    \pgfkeysvalueof{\pgfkeyscurrentkey}%
  \else%
    \pgfkeyslet{\pgfkeyscurrentkey}\pgfkeyscurrentvalue%
  \fi%
}
\def\pgfkeys@case@three{%
  \pgfkeys@split@path%
  \pgfkeysifdefined{/handlers/\pgfkeyscurrentname/.@cmd}%
  {\pgfkeysgetvalue{/handlers/\pgfkeyscurrentname/.@cmd}{\pgfkeys@code}%
    \expandafter\pgfkeys@code\pgfkeyscurrentvalue\pgfeov}
  {\pgfkeys@unknown}%
}
\let\pgfkeys@case@three@handleall=\pgfkeys@case@three
\def\pgfkeys@case@three@handle@restricted{%
  \pgfkeys@split@path%
  \pgfkeysifdefined{/handlers/\pgfkeyscurrentname/.@cmd}{%
    \pgfkeys@ifexecutehandler{%
    \pgfkeysgetvalue{/handlers/\pgfkeyscurrentname/.@cmd}{\pgfkeys@code}%
      \expandafter\pgfkeys@code\pgfkeyscurrentvalue\pgfeov
  }{%
    %
    \let\pgfkeys@temp=\pgfkeyscurrentkey
    \let\pgfkeys@tempb=\pgfkeyscurrentname
      \edef\pgfkeyscurrentkey{\pgfkeyscurrentpath}%
      \pgfkeys@split@path%
    \let\pgfkeyscurrentkey=\pgfkeys@temp
    \edef\pgfkeyscurrentname{\pgfkeyscurrentname/\pgfkeys@tempb}%
      \pgfkeys@unknown
  }%
  }{%
    \pgfkeys@unknown
  }%
}
\def\pgfkeys@ifexecutehandler#1#2{#1}%
\let\pgfkeys@ifexecutehandler@handleall=\pgfkeys@ifexecutehandler
\def\pgfkeys@ifexecutehandler@handleonlyexisting#1#2{%
  \pgfkeys@ifcsname pgfk@excpt@\pgfkeyscurrentname\endcsname%
     #1
  \else
     \pgfkeysifdefined{\pgfkeyscurrentpath}{#1}{%
    \pgfkeysifdefined{\pgfkeyscurrentpath/.@cmd}{#1}{#2}%
     }{}%
  \fi%
}%
\def\pgfkeys@ifexecutehandler@handlefullorexisting#1#2{%
  \ifpgfkeysaddeddefaultpath
    \pgfkeys@ifcsname pgfk@excpt@\pgfkeyscurrentname\endcsname%
       #1
    \else
       \pgfkeysifdefined{\pgfkeyscurrentpath}{%
         #1%
       }{%
         \pgfkeysifdefined{\pgfkeyscurrentpath/.@cmd}{%
           #1%
         }{%
           #2%
         }%
       }%
    \fi%
  \else
    #1
  \fi
}%
\def\pgfkeysaddhandleonlyexistingexception#1{\expandafter\def\csname pgfk@excpt@#1\endcsname{1}}%
\def\pgfkeys@unknown{%
  \pgfkeysifdefined{\pgfkeyscurrentpath/.unknown/.@cmd}%
  {%
    \pgfkeysgetvalue{\pgfkeyscurrentpath/.unknown/.@cmd}{\pgfkeys@code}%
    \expandafter\pgfkeys@code\pgfkeyscurrentvalue\pgfeov}
  {%
    \pgfkeysgetvalue{/handlers/.unknown/.@cmd}{\pgfkeys@code}%
    \expandafter\pgfkeys@code\pgfkeyscurrentvalue\pgfeov%
  }%
}
\long\def\pgfkey@argumentisspace#1{%
  \long\def\pgfkeys@spdef##1##2{%
    \futurelet\pgfkeys@possiblespace\pgfkeys@sp@a##2\pgfkeys@stop\pgfkeys@stop#1\pgfkeys@stop\relax##1}%
  \def\pgfkeys@sp@a{%
    \ifx\pgfkeys@possiblespace\pgfkeys@sptoken%
      \expandafter\pgfkeys@sp@b%
    \else%
      \expandafter\pgfkeys@sp@b\expandafter#1%
    \fi}%
  \long\def\pgfkeys@sp@b#1##1 \pgfkeys@stop{\pgfkeys@sp@c##1}%
}
\long\def\pgfkeys@sp@c#1\pgfkeys@stop#2\relax#3{\pgfkeys@temptoks{#1}\edef#3{\the\pgfkeys@temptoks}}
{\def\:{\global\let\pgfkeys@sptoken= } \: }
\def\pgfkeys@add@path@as@needed{
  \expandafter\futurelet\expandafter\pgfkeys@possibleslash\expandafter\pgfkeys@check@slash\pgfkeyscurrentkey\relax%
}
\newif\ifpgfkeysaddeddefaultpath
\def\pgfkeys@check@slash{%
  \ifx\pgfkeys@possibleslash/%
    \expandafter\pgfkeys@nevermind%
  \else%
    \expandafter\pgfkeys@addpath%
  \fi%
}
\def\pgfkeys@nevermind#1\relax{%
  \pgfkeysaddeddefaultpathfalse
  \let\pgfkeyscurrentkeyRAW\pgfkeyscurrentkey
}
\def\pgfkeys@addpath#1\relax{%
  \pgfkeysaddeddefaultpathtrue
  \def\pgfkeyscurrentkeyRAW{#1}%
  \edef\pgfkeyscurrentkey{\pgfkeysdefaultpath#1}%
}
\def\pgfkeys@split@path{
  \pgfkeys@pathtoks{}%
  \expandafter\pgfkeys@splitter\pgfkeyscurrentkey//%
}
\def\pgfkeys@splitter#1/#2/{%
  \def\pgfkeys@temp{#2}%
  \ifx\pgfkeys@temp\pgfkeys@empty%
    \def\pgfkeyscurrentname{#1}%
    \expandafter\pgfkeys@gobbletoslash%
  \else%
    \expandafter\pgfkeys@pathtoks\expandafter{\the\pgfkeys@pathtoks#1/}%
  \fi%
  \pgfkeys@splitter#2/%
}
\def\pgfkeys@gobbletoslash\pgfkeys@splitter/{\expandafter\pgfkeys@remove@slash\the\pgfkeys@pathtoks\relax}%
\def\pgfkeys@remove@slash#1/\relax{\pgfkeys@pathtoks{#1}}
\def\pgfqkeys{\expandafter\pgfkeys@@qset\expandafter{\pgfkeysdefaultpath}}%
\long\def\pgfkeys@@qset#1#2#3{\def\pgfkeysdefaultpath{#2/}\pgfkeys@parse#3,\pgfkeys@mainstop\def\pgfkeysdefaultpath{#1}}
\long\def\pgfkeysalso#1{\pgfkeys@parse#1,\pgfkeys@mainstop}
\long\def\pgfqkeysalso#1#2{\def\pgfkeysdefaultpath{#1/}\pgfkeys@parse#2,\pgfkeys@mainstop}
\long\def\pgfkeysdef#1#2{%
  \long\def\pgfkeys@temp##1\pgfeov{#2}%
  \pgfkeyslet{#1/.@cmd}{\pgfkeys@temp}%
  \pgfkeyssetvalue{#1/.@body}{#2}%
}
\long\def\pgfkeysedef#1#2{%
  \long\edef\pgfkeys@temp##1\pgfeov{#2}%
  \pgfkeyslet{#1/.@cmd}{\pgfkeys@temp}%
  \pgfkeyssetvalue{#1/.@body}{#2}%
}
\long\def\pgfkeysdefargs#1#2#3{%
  \long\def\pgfkeys@temp#2\pgfeov{#3}%
  \pgfkeyslet{#1/.@cmd}{\pgfkeys@temp}%
  \pgfkeyssetvalue{#1/.@args}{#2\pgfeov}%
  \pgfkeyssetvalue{#1/.@body}{#3}%
}
\long\def\pgfkeysedefargs#1#2#3{%
  \long\edef\pgfkeys@temp#2\pgfeov{#3}%
  \pgfkeyslet{#1/.@cmd}{\pgfkeys@temp}%
  \pgfkeyssetvalue{#1/.@args}{#2\pgfeov}%
  \pgfkeyssetvalue{#1/.@body}{#3}%
}
\long\def\pgfkeysdefnargs#1#2#3{\pgfkeysdefnargs@{#1}{#2}{#3}{\def}}%
\long\def\pgfkeysedefnargs#1#2#3{\pgfkeysdefnargs@{#1}{#2}{#3}{\edef}}%
\long\def\pgfkeysdefnargs@#1#2#3#4{%
  \ifcase#2\relax
    \pgfkeyssetvalue{#1/.@args}{}%
  \or
    \pgfkeyssetvalue{#1/.@args}{##1}%
  \or
    \pgfkeyssetvalue{#1/.@args}{##1##2}%
  \or
    \pgfkeyssetvalue{#1/.@args}{##1##2##3}%
  \or
    \pgfkeyssetvalue{#1/.@args}{##1##2##3##4}%
  \or
    \pgfkeyssetvalue{#1/.@args}{##1##2##3##4##5}%
  \or
    \pgfkeyssetvalue{#1/.@args}{##1##2##3##4##5##6}%
  \or
    \pgfkeyssetvalue{#1/.@args}{##1##2##3##4##5##6##7}%
  \or
    \pgfkeyssetvalue{#1/.@args}{##1##2##3##4##5##6##7##8}%
  \or
    \pgfkeyssetvalue{#1/.@args}{##1##2##3##4##5##6##7##8##9}%
  \else
    \pgfkeys@error{\string\pgfkeysdefnargs: expected  <= 9 arguments, got #2}%
  \fi
  \pgfkeysgetvalue{#1/.@args}\pgfkeys@tempargs
  \def\pgfkeys@temp{\expandafter\long\expandafter#4\csname pgfk@#1/.@@body\endcsname}%
  \expandafter\pgfkeys@temp\pgfkeys@tempargs{#3}%
  \edef\pgfkeys@tempargs{\noexpand\pgfkeysvalueof{#1/.@@body}}%
  \def\pgfkeys@temp{\pgfkeysdef{#1}}%
  \expandafter\pgfkeys@temp\expandafter{\pgfkeys@tempargs##1}%
  \pgfkeyssetvalue{#1/.@body}{#3}%
}
    \def\pgfkeys@marshal{\expandafter\gdef\expandafter\pgfkeys@global@temp\pgfkeys@tempargs}%
\pgfkeys@marshal\expandafter{\pgfkeys@tempbody}%
    \edef\pgf@expanded@path{\pgfkeyscurrentpath}%
    {%
      \toks0{#1}%
      \pgfkeysifdefined{\pgf@expanded@path/.@cmd}%
      {\expandafter\expandafter\expandafter\pgfkeys@temptoks\expandafter\expandafter\expandafter{\csname pgfk@\pgf@expanded@path/.@body\endcsname}}%
      {\expandafter\pgfkeys@temptoks\expandafter{\expandafter\pgfkeyssetvalue\expandafter{\pgf@expanded@path}{##1}}}%
      \toks1{#2}%
      \xdef\pgfkeys@global@temp{\the\toks0 \the\pgfkeys@temptoks \the\toks1 }%
    }%
    \def\pgf@temp{\pgfkeyssetvalue{\pgf@expanded@path/.@body}}%
\pgf@temp\expandafter{\pgfkeys@global@temp}%
\def\expandafter\pgfkeys@temp\expandafter##\expandafter1\expandafter\pgfeov\expandafter{\pgfkeys@global@temp}%
    \def\pgf@marshal{\pgfkeysvalueof{/errors/value forbidden/.@cmd}}%
\def\pgfkeys@handle@boolean#1#2{%
  \pgfkeys@ifcsname#1#2\endcsname%
    \csname#1#2\endcsname%
  \else%
    \def\pgf@marshal{\pgfkeysvalueof{/errors/boolean expected/.@cmd}}%
    \expandafter\pgf@marshal\expandafter{\pgfkeyscurrentkey}{#2}\pgfeov%
  \fi
}
\def\pgfkeys@was@choice{##1}\expandafter\pgfkeysalso\expandafter{\pgfkeyscurrentkey/##1},
      \def\pgf@marshal{\pgfkeysvalueof{/errors/unknown choice value/.@cmd}}%
      {\expandafter\expandafter\expandafter\pgf@marshal\expandafter\expandafter\expandafter{\expandafter\the\expandafter\pgfkeys@pathtoks\expandafter}\expandafter{\pgfkeys@was@choice}\pgfeov}%
    \def\pgf@keys@temp{}%
\gdef%
    \edef\pgf@keys@list@path{\pgfkeyscurrentpath}%
\def\pgf@keys@do@list#1{\pgf@keys@utilifnextchar\bgroup{\pgf@keys@do@list@item{#1}}\pgf@keys@gobble}
\def\pgf@keys@do@list@item#1#2{\pgfkeysalso{#1={#2}}\pgf@keys@do@list{#1}}
\def\pgfkeys@searchalso@prepare@unknown@handler#1{%
  \global\def\pgfkeys@global@temp##1\pgfeov{}%
  \pgfkeys@searchalso@parse#1,\pgfkeys@mainstop
  {%
    \toks0=\expandafter{\pgfkeys@global@temp##1\pgfeov}%
  \toks1={\pgfkeysalso{/handlers/.unknown/.@cmd/.expand once=\pgfkeys@searchalso@temp@value}}%
  \xdef\pgfkeys@global@temp{%
    \noexpand\def\noexpand\pgfkeys@searchalso@temp@value{####1}%
    \noexpand\ifpgfkeysaddeddefaultpath
      \noexpand\pgfkeyssuccessfalse
      \noexpand\let\noexpand\pgfkeys@searchalso@name=\noexpand\pgfkeyscurrentkeyRAW
      \the\toks0 
    \noexpand\else
      \the\toks1 
    \noexpand\fi
  }%
  \expandafter\gdef\expandafter\pgfkeys@global@temp\expandafter##\expandafter1\expandafter\pgfeov\expandafter{\pgfkeys@global@temp}%
  }%
}%
\def\pgfkeys@searchalso@parse{\futurelet\pgfkeys@possiblerelax\pgfkeys@searchalso@parse@main}
\def\pgfkeys@searchalso@parse@main{%
  \ifx\pgfkeys@possiblerelax\pgfkeys@mainstop%
    \expandafter\pgfkeys@cleanup%
  \else%
    \expandafter\pgfkeys@searchalso@appendentry%
  \fi%
}
\def\pgfkeys@searchalso@appendentry#1,#2{%
  \def\pgfkeys@searchalso@nexttok{#2}%
  \pgfkeys@spdef\pgfkeys@temp{#1}%
  {%
    \toks0=\expandafter{\pgfkeys@global@temp##1\pgfeov}%
  \toks1=\expandafter{\pgfkeys@temp}%
  \xdef\pgfkeys@global@temp{%
    \the\toks0 
    \noexpand\ifpgfkeyssuccess\noexpand\else
      \noexpand\pgfqkeys{\the\toks1 }{\noexpand\pgfkeys@searchalso@name
               \ifx\pgfkeys@searchalso@nexttok\pgfkeys@mainstop\else/.try\fi /.expand once=\noexpand\pgfkeys@searchalso@temp@value}%
    \noexpand\fi}%
  \expandafter\gdef\expandafter\pgfkeys@global@temp\expandafter##\expandafter1\expandafter\pgfeov\expandafter{\pgfkeys@global@temp}%
  }%
  \pgfkeys@searchalso@parse#2%
}
\edef\pgfkeysdefaultpath{\pgfkeyscurrentpath/},%
\long\def\pgfkeys@exp@call#1{\pgfkeysalso{\pgfkeyscurrentpath={#1}}}
\newif\ifpgfkeyssuccess
\def\pgfkeys@try{%
  \edef\pgfkeyscurrentkey{\pgfkeyscurrentpath}
  \ifx\pgfkeyscurrentvalue\pgfkeysnovalue@text
    \pgfkeysifdefined{\pgfkeyscurrentpath/.@def}%
    {\pgfkeysgetvalue{\pgfkeyscurrentpath/.@def}{\pgfkeyscurrentvalue}}
    {}
  \fi%
  \pgfkeysifdefined{\pgfkeyscurrentpath/.@cmd}%
  {%
    \pgfkeysgetvalue{\pgfkeyscurrentpath/.@cmd}{\pgfkeys@code}%
    \expandafter\pgfkeys@code\pgfkeyscurrentvalue\pgfeov%
    \pgfkeyssuccesstrue%
  }%
  {%
    \pgfkeysifdefined{\pgfkeyscurrentpath}%
    {%
      \ifx\pgfkeyscurrentvalue\pgfkeysnovalue@text%
        \pgfkeysvalueof{\pgfkeyscurrentpath}%
      \else%
        \pgfkeyslet{\pgfkeyscurrentpath}\pgfkeyscurrentvalue%
      \fi%
      \pgfkeyssuccesstrue%
    }%
    {%
      \pgfkeys@split@path%
      \pgfkeysifdefined{/handlers/\pgfkeyscurrentname/.@cmd}{%
        \pgfkeys@ifexecutehandler{%
        \pgfkeysgetvalue{/handlers/\pgfkeyscurrentname/.@cmd}{\pgfkeys@code}%
          \expandafter\pgfkeys@code\pgfkeyscurrentvalue\pgfeov
          \pgfkeyssuccesstrue%
      }{%
        \pgfkeyssuccessfalse
      }%
      }{%
      \pgfkeyssuccessfalse
      }%
  }%
  }%
}
    \def\pgf@temp{#2}%
    \def\pgf@marshal{\pgfkeysvalueof{/errors/unknown key/.@cmd}}%
    {\expandafter\expandafter\expandafter\pgf@marshal\expandafter\expandafter\expandafter{\expandafter\pgfkeyscurrentkey\expandafter}\expandafter{\pgfkeyscurrentvalue}\pgfeov}%
    \let\pgfkeys@case@three=\pgfkeys@case@three@handleall
    \let\pgfkeys@ifexecutehandler=\pgfkeys@ifexecutehandler@handleall
    \let\pgfkeys@case@three=\pgfkeys@case@three@handle@restricted
    \let\pgfkeys@ifexecutehandler=\pgfkeys@ifexecutehandler@handleonlyexisting
    \let\pgfkeys@case@three=\pgfkeys@case@three@handle@restricted
    \let\pgfkeys@ifexecutehandler=\pgfkeys@ifexecutehandler@handlefullorexisting
\def\pgf@keys@gobble#1{}
\long\def\pgf@keys@utilifnextchar#1#2#3{%
  \let\pgf@keys@utilreserved@d=#1%
  \def\pgf@keys@utilreserved@a{#2}%
  \def\pgf@keys@utilreserved@b{#3}%
  \futurelet\pgf@keys@utillet@token\pgf@keys@utilifnch}
\def\pgf@keys@utilifnch{%
  \ifx\pgf@keys@utillet@token\pgf@keys@utilsptoken
    \let\pgf@keys@utilreserved@c\pgf@keys@utilxifnch
  \else
    \ifx\pgf@keys@utillet@token\pgf@keys@utilreserved@d
      \let\pgf@keys@utilreserved@c\pgf@keys@utilreserved@a
    \else
      \let\pgf@keys@utilreserved@c\pgf@keys@utilreserved@b
    \fi
  \fi
  \pgf@keys@utilreserved@c}
{%
  \def\:{\global\let\pgf@keys@utilsptoken= } \:
  \def\:{\pgf@keys@utilxifnch} \expandafter\gdef\: {\futurelet\pgf@keys@utillet@token\pgf@keys@utilifnch}
}
\let\pgfkeys@orig@case@one=\pgfkeys@case@one
\let\pgfkeys@orig@@set=\pgfkeys@@set
\let\pgfkeys@orig@@qset=\pgfkeys@@qset
\let\pgfkeys@orig@try=\pgfkeys@try
\let\pgfkeys@orig@unknown=\pgfkeys@unknown
\newif\ifpgfkeysfilteringisactive
\newif\ifpgfkeysfiltercontinue
\let\pgfkeys@key@predicate=\pgfkeys@empty
\let\pgfkeys@filtered@handler=\pgfkeys@empty
\newtoks\pgfkeys@tmptoks
\def\pgfkeysfiltered{%
	\expandafter\pgfkeysfiltered@@install\expandafter{\pgfkeysdefaultpath}%
}
\long\def\pgfkeysfiltered@@install#1#2{%
	\pgfkeys@install@filter@and@invoke{%
		\let\pgfkeysdefaultpath\pgfkeys@root%
		\pgfkeys@parse#2,\pgfkeys@mainstop%
		\def\pgfkeysdefaultpath{#1}%
	}%
}
\long\def\pgfkeysalsofrom#1{%
	\expandafter\pgfkeysalso\expandafter{#1}%
}
\long\def\pgfkeysalsofilteredfrom#1{%
	\expandafter\pgfkeysalsofiltered\expandafter{#1}%
}
\long\def\pgfkeysalsofiltered#1{%
	\pgfkeys@install@filter@and@invoke{\pgfkeysalso{#1}}%
}%
\long\def\pgfqkeysfiltered#1{%
	\expandafter\pgfqkeysfiltered@@install\expandafter{\pgfkeysdefaultpath}{#1}%
}
\long\def\pgfqkeysfiltered@@install#1#2#3{%
	\pgfkeys@install@filter@and@invoke{%
		\def\pgfkeysdefaultpath{#2/}\pgfkeys@parse#3,\pgfkeys@mainstop\def\pgfkeysdefaultpath{#1}%
	}%
}
\def\pgfkeysactivatefamily#1{%
	\pgfkeysiffamilydefined
		{#1}%
		{\csname pgfk@#1/familyactivetrue\endcsname}%
		{\pgfkeysvalueof{/errors/family unknown/.@cmd}{#1}\pgfeov}%
}
\def\pgfkeysdeactivatefamily#1{%
	\pgfkeysiffamilydefined
		{#1}%
		{\csname pgfk@#1/familyactivefalse\endcsname}%
		{\pgfkeysvalueof{/errors/family unknown/.@cmd}{#1}\pgfeov}%
}
\def\pgfkeysactivatefamilies#1#2{%
	\pgfkeyssavekeyfilterstateto\pgfkeys@cur@state
	\expandafter\pgfkeysactivatefamilies@impl\expandafter{\pgfkeys@cur@state}{#1}{#2}%
}
\def\pgfkeysactivatefamilies@impl#1#2#3{%
	\pgfkeysinstallkeyfilter{/pgf/key filters/false}{}%
	\let#3=\pgfkeys@empty%
	\def\pgfkeys@filtered@handler{\pgfkeys@family@activate@handler{#3}}%
	\pgfkeysalsofiltered{#2}%
	#1%
}
\def\pgfkeys@family@activate@handler#1{%
	\pgfkeysactivatefamily{\pgfkeyscurrentkey}%
	\pgfkeys@tmptoks=\expandafter\expandafter\expandafter{\expandafter#1\expandafter\pgfkeysdeactivatefamily\expandafter{\pgfkeyscurrentkey}}%
	\edef#1{\the\pgfkeys@tmptoks}%
}
\long\def\pgfkeysiffamilydefined#1#2#3{\pgfkeys@ifcsname ifpgfk@#1/familyactive\endcsname#2\else#3\fi}
\def\pgfkeysisfamilyactive#1{%
	\pgfkeysiffamilydefined{#1}{%
		\expandafter\let\expandafter\ifpgfkeysfiltercontinue\csname ifpgfk@#1/familyactive\endcsname
	}{%
		\pgfkeysvalueof{/errors/family unknown/.@cmd}{#1}\pgfeov%
		\expandafter\expandafter\expandafter\let\csname ifpgfkeysfiltercontinue\endcsname\csname iffalse\endcsname
	}%
}%
\def\pgfkeysgetfamily#1#2{%
	\pgfkeysifdefined{#1/family}{\pgfkeysgetvalue{#1/family}{#2}\pgfkeyssuccesstrue}{\pgfkeyssuccessfalse}%
}
\def\pgfkeyssetfamily#1#2{%
	\pgfkeysiffamilydefined{#2}{%
		\pgfkeyssetvalue{#1/family}{#2}%
	}{%
		\pgfkeysalso{/errors/family unknown=#2}%
	}%
}%
\long\def\pgfkeys@cur@is@descendant@of@errors{%
	\expandafter\pgfkeys@cur@is@descendant@of@errors@impl\pgfkeyscurrentkey/errors\pgf@@eov
}%
\long\def\pgfkeys@cur@is@descendant@of@errors@impl#1/errors#2\pgf@@eov{%
	\def\pgfkeyspred@TMP{#1}%
	\ifx\pgfkeyspred@TMP\pgfkeys@empty
		\pgfkeysfiltercontinuetrue
	\else
		\pgfkeysfiltercontinuefalse
	\fi
}%
\def\pgfkeysinterruptkeyfilter{%
	\ifpgfkeysfilteringisactive
		\let\pgfkeys@case@one=\pgfkeys@orig@case@one
		\let\pgfkeys@try=\pgfkeys@orig@try
		\let\pgfkeys@unknown=\pgfkeys@orig@unknown
	\fi
}
\def\endpgfkeysinterruptkeyfilter{%
	\ifpgfkeysfilteringisactive
		\let\pgfkeys@case@one=\pgfkeys@case@one@filtered
		\let\pgfkeys@try=\pgfkeys@try@filtered
		\let\pgfkeys@unknown=\pgfkeys@unknown@filtered
	\fi
}
\def\pgfkeysactivatefamiliesandfilteroptions#1#2{%
	\pgfkeysactivatefamilies{#1}{\pgfkeys@family@deactivation}%
	\pgfkeysfiltered{#2}%
	\pgfkeys@family@deactivation
}
\def\pgfqkeysactivatefamiliesandfilteroptions#1#2#3{%
	\pgfkeysactivatefamilies{#1}{\pgfkeys@family@deactivation}%
	\pgfqkeysfiltered{#2}{#3}%
	\pgfkeys@family@deactivation
}
\def\pgfkeyssplitpath{\pgfkeys@split@path}%
\def\pgfkeysinstallkeyfilter#1#2{%
	\pgfkeysifdefined{#1/.@cmd}{%
		\edef\pgfkeyscurrentkeyfilter{#1}%
		\def\pgfkeyscurrentkeyfilterargs{#2}%
		\pgfkeysgetvalue{#1/.@cmd}{\pgfkeys@key@predicate@}%
		\def\pgfkeys@key@predicate{\pgfkeys@key@predicate@#2\pgfeov}%
	}{%
		\pgfkeysvalueof{/errors/no such key filter/.@cmd}{#1}{#2}\pgfeov%
	}%
}
\def\pgfkeysinstallkeyfilterhandler#1#2{%
	\pgfkeysifdefined{#1/.@cmd}{%
		\edef\pgfkeyscurrentkeyfilterhandler{#1}%
		\def\pgfkeyscurrentkeyfilterhandlerargs{#2}%
		\pgfkeysgetvalue{#1/.@cmd}{\pgfkeys@filtered@handler@}%
		\def\pgfkeys@filtered@handler{\pgfkeys@filtered@handler@#2\pgfeov}%
	}{%
		\pgfkeysvalueof{/errors/no such key filter handler/.@cmd}{#1}{#2}\pgfeov%
	}%
}
\def\pgfkeyssavekeyfilterstateto#1{%
	\pgfkeys@tmptoks={\pgfkeysinstallkeyfilter}%
	\pgfkeys@tmptoks=\expandafter\expandafter\expandafter{\expandafter\the\expandafter\pgfkeys@tmptoks\expandafter{\pgfkeyscurrentkeyfilter}}%
	\pgfkeys@tmptoks=\expandafter\expandafter\expandafter{\expandafter\the\expandafter\pgfkeys@tmptoks\expandafter{\pgfkeyscurrentkeyfilterargs}\pgfkeysinstallkeyfilterhandler}%
	\pgfkeys@tmptoks=\expandafter\expandafter\expandafter{\expandafter\the\expandafter\pgfkeys@tmptoks\expandafter{\pgfkeyscurrentkeyfilterhandler}}%
	\pgfkeys@tmptoks=\expandafter\expandafter\expandafter{\expandafter\the\expandafter\pgfkeys@tmptoks\expandafter{\pgfkeyscurrentkeyfilterhandlerargs}}%
	\edef#1{%
		\the\pgfkeys@tmptoks
	}%
}
		\edef\pgfkeyspred@TMP{pgfk@\pgfkeyscurrentpath/familyactive}%
\pgfkeys@non@outer@newif\expandafter{\pgfkeyspred@TMP}%
		\edef\pgfkeyspred@TMP{\pgfkeyscurrentpath/.belongs to family=\pgfkeyscurrentpath}%
\pgfkeysalso\expandafter{\pgfkeyspred@TMP}%
				\edef\pgfkeyscurrentkey{\pgfkeyscurrentpath}%
		\pgfkeysinstallkeyfilter{\pgfkeyscurrentpath}{#1}%
		\pgfkeysinstallkeyfilterhandler{\pgfkeyscurrentpath}{#1}%
				\def\pgfkeysisdescendantof@impl##1#1##2\pgf@@eov{%
					\def\pgfkeyspred@TMP{##1}%
					\ifx\pgfkeyspred@TMP\pgfkeys@empty
						\pgfkeysfiltercontinuetrue
					\else
						\pgfkeysfiltercontinuefalse
					\fi
				}%
			\def\pgfkeysisdescendantof@impl##1#1##2\pgf@@eov{%
				\def\pgfkeyspred@TMP{##1}%
				\ifx\pgfkeyspred@TMP\pgfkeys@empty
					\pgfkeysfiltercontinuetrue
				\else
					\pgfkeysfiltercontinuefalse
				\fi
			}%
			\def\pgfkeyspred@TMP{#1}%
\def\pgfkeys@case@one@filtered{%
	\pgfkeys@cur@is@descendant@of@errors
	\ifpgfkeysfiltercontinue
		\pgfkeys@orig@case@one
	\else
		\pgfkeysfiltercontinuetrue
		\pgfkeysifdefined{\pgfkeyscurrentkey/.@cmd}{%
			\def\pgfkeyscasenumber{1}%
			\pgfkeys@key@predicate%
			\ifpgfkeysfiltercontinue
				\pgfkeysgetvalue{\pgfkeyscurrentkey/.@cmd}{\pgfkeys@code}%
				\expandafter\pgfkeys@code\pgfkeyscurrentvalue\pgfeov
			\else
				\pgfkeys@filtered@handler%
			\fi
		}{%
			\pgfkeysifdefined{\pgfkeyscurrentkey}{%
				\def\pgfkeyscasenumber{2}%
				\pgfkeys@key@predicate%
				\ifpgfkeysfiltercontinue
					\pgfkeys@case@two@extern
				\else
					\pgfkeys@filtered@handler%
				\fi
			}{%
				\pgfkeys@split@path
				\pgfkeysifdefined{/handlers/\pgfkeyscurrentname/.@cmd}{%
					\pgfkeys@ifexecutehandler{%
						\def\pgfkeyscasenumber{3}%
						\pgfkeys@key@predicate%
						\ifpgfkeysfiltercontinue
							\pgfkeysgetvalue{/handlers/\pgfkeyscurrentname/.@cmd}{\pgfkeys@code}%
							\expandafter\pgfkeys@code\pgfkeyscurrentvalue\pgfeov
						\else
							\pgfkeys@filtered@handler%
						\fi
					}{%
						\pgfkeys@unknown
					}%
				}{%
					\pgfkeys@unknown
				}%
			}%
		}%
	\fi
}%
\def\pgfkeys@unknown@filtered{%
	\def\pgfkeyscasenumber{0}%
	\pgfkeys@key@predicate%
	\ifpgfkeysfiltercontinue
		\pgfkeys@orig@unknown
	\else
		\pgfkeys@filtered@handler%
	\fi
}
\def\pgfkeys@try@filtered{%
	\ifpgfkeysfiltercontinue
		\pgfkeys@orig@try
	\else
		\pgfkeysfiltercontinuetrue
		\edef\pgfkeyscurrentkey{\pgfkeyscurrentpath}
		\ifx\pgfkeyscurrentvalue\pgfkeysnovalue@text
			\pgfkeysifdefined{\pgfkeyscurrentpath/.@def}%
			{\pgfkeysgetvalue{\pgfkeyscurrentpath/.@def}{\pgfkeyscurrentvalue}}
			{}
		\fi%
		\pgfkeysifdefined{\pgfkeyscurrentpath/.@cmd}%
		{%
			\def\pgfkeyscasenumber{1}%
			\pgfkeys@key@predicate%
			\ifpgfkeysfiltercontinue
				\pgfkeysgetvalue{\pgfkeyscurrentkey/.@cmd}{\pgfkeys@code}%
				\expandafter\pgfkeys@code\pgfkeyscurrentvalue\pgfeov%
			\else
				\pgfkeys@filtered@handler%
			\fi
			\pgfkeyssuccesstrue%
		}%
		{%
			\pgfkeysifdefined{\pgfkeyscurrentpath}%
			{
				\def\pgfkeyscasenumber{2}%
				\pgfkeys@key@predicate%
				\ifpgfkeysfiltercontinue
					\ifx\pgfkeyscurrentvalue\pgfkeysnovalue@text%
						\pgfkeysvalueof{\pgfkeyscurrentpath}%
					\else%
						\pgfkeyslet{\pgfkeyscurrentpath}\pgfkeyscurrentvalue%
					\fi%
				\else
					\pgfkeys@filtered@handler%
				\fi
				\pgfkeyssuccesstrue%
			}%
			{%
				\pgfkeys@split@path%
				\pgfkeysifdefined{/handlers/\pgfkeyscurrentname/.@cmd}{%
					%
					\pgfkeys@ifexecutehandler{%
						\def\pgfkeyscasenumber{3}%
						\pgfkeys@key@predicate%
						\ifpgfkeysfiltercontinue
							\pgfkeysgetvalue{/handlers/\pgfkeyscurrentname/.@cmd}{\pgfkeys@code}%
							\expandafter\pgfkeys@code\pgfkeyscurrentvalue\pgfeov
						\else
							\pgfkeys@filtered@handler%
						\fi
						\pgfkeyssuccesstrue%
					}{%
						\pgfkeyssuccessfalse
					}%
				}{%
					\pgfkeyssuccessfalse
				}%
			}%
		}%
	\fi
}
\long\def\pgfkeys@install@filter@and@invoke#1{%
	\ifpgfkeysfilteringisactive
		\pgfkeys@error{Sorry, nested calls to key filtering routines are not allowed. (reason: It is not possible to properly restore the previous filtering state after returning from the nested call)}%
	\fi
	\pgfkeysfilteringisactivetrue
	\let\pgfkeys@case@one=\pgfkeys@case@one@filtered
	\let\pgfkeys@try=\pgfkeys@try@filtered
	\let\pgfkeys@unknown=\pgfkeys@unknown@filtered
	#1%
	\let\pgfkeys@case@one=\pgfkeys@orig@case@one
	\let\pgfkeys@try=\pgfkeys@orig@try
	\let\pgfkeys@unknown=\pgfkeys@orig@unknown
	\pgfkeysfilteringisactivefalse
}
\long\def\pgfkeysevalkeyfilterwith#1{%
	\pgfkeys@eval@key@filter@subroutine@unpack#1=\pgfkeysnovalue=\pgfkeys@stop
}%
\long\def\pgfkeys@eval@key@filter@subroutine@unpack#1=#2=#3\pgfkeys@stop{%
	\pgfkeys@spdef\pgfkeyspred@TMP{#1}%
	\edef\pgfkeyspred@TMP{\pgfkeyspred@TMP}%
	\pgfkeys@spdef\pgfkeyspred@TMPB{#2}
	\ifx\pgfkeyspred@TMPB\pgfkeysnovalue@text
		\pgfkeysifdefined{\pgfkeyspred@TMP/.@def}%
		{\pgfkeysgetvalue{\pgfkeyspred@TMP/.@def}{\pgfkeyspred@TMPB}}
		{}
	\fi%
	\ifx\pgfkeyspred@TMPB\pgfkeysvaluerequired%
		\pgfkeysvalueof{/errors/value required/.@cmd}\pgfkeyspred@TMP\pgfkeyspred@TMPB\pgfeov%
	\else%
		\pgfkeysifdefined{\pgfkeyspred@TMP/.@cmd}{%
			\pgfkeysgetvalue{\pgfkeyspred@TMP/.@cmd}{\pgfkeys@code}%
			\expandafter\pgfkeys@code\pgfkeyspred@TMPB\pgfeov
		}{%
			\pgfkeysvalueof{/errors/no such key filter/.@cmd}\pgfkeyspred@TMP\pgfkeyspred@TMPB\pgfeov%
		}%
	\fi%
}
\def\pgfkeys@non@outer@newif@#1#2{%
	\expandafter\edef\csname #2true\endcsname{\noexpand\let\noexpand#1=\noexpand\iftrue}%
	\expandafter\edef\csname #2false\endcsname{\noexpand\let\noexpand#1=\noexpand\iffalse}%
	\csname #2false\endcsname
}%
\def\pgfkeys@non@outer@newif#1{%
	\expandafter\pgfkeys@non@outer@newif@\csname if#1\endcsname{#1}%
}
\theoremstyle{definition}
\newtheorem{mydef}{Definition}[section]
\theoremstyle{plain}
\newtheorem{mythm}[mydef]{Theorem}
\newtheorem{mycor}[mydef]{Corollary}
\newtheorem{mylem}[mydef]{Lemma}
\theoremstyle{remark}
\newtheorem{myrem}[mydef]{Remark}
\tikzstyle{my node}=[circle,minimum size = 4pt, fill=black,inner sep=0pt]
\tikzstyle{empty}=[fill=none]
\tikzstyle{my path}=[-stealth, thick, shorten >=1pt, shorten <=1pt]
\newcommand{\id}{\operatorname{id}}					
\DeclareMathOperator*{\colim}{colim}					
\DeclareMathOperator{\sd}{Sd}						
\newcommand{\cat}[1]{\mathcal{#1}}					
\newcommand*{\xrightrightarrows}[2]{\mathrel{
  \settowidth{\@tempdima}{$\scriptstyle#1$}
  \settowidth{\@tempdimb}{$\scriptstyle#2$}
  \ifdim\@tempdimb>\@tempdima \@tempdima=\@tempdimb\fi
  \mathop{\vcenter{
    \offinterlineskip\ialign{\hbox to\dimexpr\@tempdima+1em{##}\cr
    \rightarrowfill\cr\noalign{\kern.5ex}
    \rightarrowfill\cr}}}\limits^{\!#1}_{\!#2}}}
\newcommand{\thol}{\tau_1\sd^2}						
\newcommand{\trisimp}{\xymatrix@C=0pt@M=0pt@R=.5pt{&\cdot\ar@{-}[dr]&\\\cdot\ar@{-}[ur]\ar@{-}[r]&&\cdot}}
\newcommand{\Ac}{\mathbf{Ac}}						
\newcommand{\sSet}{\mathbf{sSet}}					
\newcommand{\Cat}{\mathbf{Cat}}						
\newcommand{\Pos}{\mathbf{Pos}}						
\newcommand{\ie}{\emph{i.e.\ }}
\newcommand{\eg}{\emph{e.g.\ }}
\newcommand{\tel}{[0]}
\newcommand{\iel}{\emptyset}
\newcommand{\eset}{\varnothing}
\begin{document}

\title{Cofibrant Objects in the Thomason Model Structure}
\author{Roman Bruckner \and Christoph Pegel}
\address{Fachbereich Mathematik und Informatik, Universit\"at Bremen,
  Bibliothekstra\ss{}e~1, 28359~Bremen, Bremen, Germany.}
\email[Roman Bruckner]{bruckner@math.uni-bremen.de }
\email[Christoph Pegel]{pegel@math.uni-bremen.de }

\begin{abstract}
There are Quillen equivalent Thomason model structures on the category of small categories, the category of small acyclic categories and the category of posets. These share the property that cofibrant objects are posets. In fact, they share the same class of cofibrant objects. We show that every finite semilattice, every chain, every countable tree, every finite zigzag and every poset with five or less elements is cofibrant in all of those structures.
\end{abstract}

\maketitle

\pagenumbering{arabic}
\setcounter{page}{1}

\input{Sections/x-figures}

\section{Introduction}
In \cite{thomason:ccmc}, Thomason showed that there is a model structure on $\Cat$---the category of small categories---which is Quillen equivalent to the Quillen model structure on $\sSet$,  the category of simplicial sets. This model structure has been lifted to $\Pos$, the category of posets in \cite{raptis:hotopo}, and to $\Ac$, the category of small acyclic categories in \cite{bruckner:amsosac}. In his original paper, Thomason already showed that every cofibrant object in his model structure on $\Cat$ is a poset \cite[Proposition~5.7]{thomason:ccmc}. Since the model structures on $\Pos$, $\Ac$ and $\Cat$ are cofibrantly generated by the same classes of generating cofibrations and trivial cofibrations, and pushouts and colimits along cofibrations yield the same objects in all of those, this implies that all three categories feature the same class of cofibrant objects. 

There have been many attempts to find Thomason model structures on various categories, \eg the category of $G$--categories \cite{bmoopy:gcat}, the category of $G$--posets \cite{msz:htep}, the category of strict $n$-categories \cite{am:ncat} or the category of small $n$-fold categories \cite{fiorepaoli:nfcat}. However, to the knowledge of the authors, there have been no attempts to identify cofibrant objects in the Thomason model structure, except for one chapter in \cite{msz:htep}. 

In \cite[Proposition~6.5]{msz:htep} it was proved, that every finite one-dimensional poset is cofibrant, \ie every poset that has a $1$-skeletal nerve. In this paper we identify various other classes of cofibrant posets. In Section~\ref{sec:tcl}, we show that every finite semilattice, every countable tree, every chain and every finite zigzag is cofibrant. In Section~\ref{sec:finpos} we show that every poset with five or less elements is cofibrant. Moreover, we prove that every inclusion of a minimum into any of the cofibrant posets we identified is a cofibration.
\section{Preliminaries}
We assume that the reader has some general knowledge on model categories and the Thomason model structure and use this section primarily to fix some notation and terminology.
We denote by $\Delta^n$ the standard $n$--simplex, given by $\Delta^n = N([n])$ where $N\colon\Cat\to\sSet$ is the usual nerve functor, and 
\begin{align*}
	[n]=0\to1\to\dotsb\to n
\end{align*}
the finite ordinal with $n+1$ elements. Given a simplicial set $X$, the barycentric subdivision is denoted by $\sd X$. In particular, given the barycentric subdivision of a standard $n$--simplex, we denote the vertices of $\sd\Delta^n$ by sets of vertices of $\Delta^n$. For example,
\begin{align*}
	(\sd\Delta^2)_0 = \left\{\left\{0\right\},\left\{1\right\},\left\{2\right\},\left\{0,1\right\},\left\{1,2\right\},\left\{0,2\right\},\left\{0,1,2\right\}\right\}\text{.}
\end{align*}
By $\tau_1\colon\sSet\to\Cat$ we denote the fundamental category functor, \ie the left adjoint of the nerve.

Given any poset $P$, we treat it as a category in the sense that there is an arrow from $x$ to $y$ if and only if $x\le y$. In particular, when talking about the natural numbers $\mathbb{N}$, we are talking about the category
\begin{align*}
	0\to 1\to 2\to 3\to\dotsb\text{.}
\end{align*}
 Furthermore, we denote by $\iel$ the initial, and by $\tel$ the terminal object in $\Pos$, $\Cat$, and $\Ac$, respectively.

When we talk about the Thomason model structure, we mean the Thomason model structure on either $\Pos$, $\Ac$, or $\Cat$. In particular, when we use the term category, we refer to an object in any Thomason model structure. Given any category $\cat{C}$, we denote by $\cat{C}^{(0)}$ its class of objects and by $\cat{C}^{(1)}$ its class of morphisms. Given two categories $\cat{C}$ and $\cat{D}$, we say that $\cat{D}$ is a \emph{retract} of $\cat{C}$, if $\iel\to\cat{D}$ is a retract of $\iel\to\cat{C}$.

We need a few theorems, that are used throughout this paper.

\begin{mythm}\label{thm:sdiscof}
	The category $\tau_1\sd\Delta^n$ is cofibrant, and every inclusion
	\begin{align*}
		i_k\colon[0]&\to\tau_1\sd\Delta^n\text{,}\\
		0&\mapsto\left\{k\right\}
	\end{align*}
	is a cofibration.
\end{mythm}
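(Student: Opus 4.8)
\section*{Proof proposal}

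The plan is to deduce the first assertion from the second. The one-point category $[0]=\tau_1\sd^2\Delta^0$ is cofibrant, being the value of the left Quillen functor $\tau_1\sd^2$ on a simplicial set, so once each $i_k$ is shown to be a cofibration, cofibrancy of $\tau_1\sd\Delta^n$ follows by composing $\emptyset\to[0]\xrightarrow{i_k}\tau_1\sd\Delta^n$. It is convenient to identify $\tau_1\sd\Delta^n$ with the poset $\cat{P}$ of nonempty subsets of $\{0,\dots,n\}$ ordered by inclusion, using $\sd\Delta^n=N\cat{P}$ and $\tau_1 N=\id$ on posets. Under this identification the vertices $\{k\}$ are the minimal elements, the full set $\{0,\dots,n\}$ is the maximum, and $\tau_1\sd\partial\Delta^n$ is the full subposet $\cat{P}'$ of proper nonempty subsets. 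Since for $n\ge 1$ each $i_k$ factors as $[0]\to\tau_1\sd\partial\Delta^n\hookrightarrow\tau_1\sd\Delta^n$, I would prove by induction on $n$ the two statements that $\tau_1\sd\partial\Delta^n\hookrightarrow\tau_1\sd\Delta^n$ is a cofibration and that each $i_k$ is a cofibration; the base case $n=0$ is just $\emptyset\to[0]$ together with the identity on $[0]$.

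The structural observation driving the induction is that $\sd\Delta^n$ is the cone on $\sd\partial\Delta^n$: writing the adjunction of a maximum to a poset as a join with $\Delta^0$, one has $\sd\Delta^n=N\cat{P}=N(\cat{P}'\star[0])=\sd\partial\Delta^n\star\Delta^0$, with the cone point the maximum. The inductive assembly of $\tau_1\sd\partial\Delta^n$ is then formal: $\partial\Delta^n$ is glued from its $n+1$ facets along common faces, and since both $\sd$ (a left adjoint, with right adjoint $\ex$) and $\tau_1$ preserve colimits, applying $\tau_1\sd$ presents $\tau_1\sd\partial\Delta^n$ as an iterated pushout of the inclusions $\tau_1\sd\partial\Delta^m\hookrightarrow\tau_1\sd\Delta^m$ for $m<n$ and their faces, which are cofibrations by the induction hypothesis. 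Because pushouts along cofibrations agree in $\Pos$, $\Ac$ and $\Cat$, these remain cofibrations, so $\tau_1\sd\partial\Delta^n$ is cofibrant and each vertex inclusion into it is a cofibration. The cone picture moreover identifies, cell by cell, the passage from $\sd\partial\Delta^n$ to its cone with the attachment of horns $\Lambda^{m+1}_{v}\hookrightarrow\Delta^{m+1}$ (the cone on a facet inclusion $\partial\Delta^m\hookrightarrow\Delta^m$), which is the homotopical reason the cone inclusion is a weak equivalence.

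The one genuinely non-formal step, and the step I expect to be the main obstacle, is to upgrade this to a cofibration: to show that the cone inclusion $\tau_1\sd\partial\Delta^n\hookrightarrow\tau_1\sd\Delta^n$ adjoining the maximum is a cofibration. This resists the formal arguments above because single subdivision is not left Quillen: adjoining the maximum is not a Dwyer map, since the subposet of proper subsets has several maximal elements and so admits no adjoint retraction of the new top, and correspondingly the evident last-vertex section $\tau_1\sd\Delta^n\to\tau_1\sd^2\Delta^n$ is not monotone, so $\tau_1\sd\Delta^n$ is not a naive retract of the cofibrant object $\tau_1\sd^2\Delta^n$. This missing reflexivity is exactly what Thomason's second subdivision supplies, and I would exploit it by working at the double-subdivided level, presenting the cone inclusion as a retract, in the arrow category, of a relative cell complex assembled from the generating trivial cofibrations $\tau_1\sd^2(\Lambda^{m}_{k}\hookrightarrow\Delta^{m})$ corresponding to the horn attachments found above. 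Constructing the retraction compatibly on source and target --- which the last-vertex maps alone do \emph{not} provide --- is the heart of the proof; once it is in place, the cone inclusion is a cofibration, the induction closes, and $i_k\colon[0]\to\tau_1\sd\Delta^n$ is a cofibration for every $k$, whence $\tau_1\sd\Delta^n$ is cofibrant.
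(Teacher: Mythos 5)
The paper itself does not prove this statement: its ``proof'' is a pointer to Lemma~1 of \cite{cisinski:dnpr}, so a self-contained argument would genuinely add something. Unfortunately yours does not work, and the failure is not just the admitted incompleteness of your final step --- the step itself is false. Your induction pivots on the claim that the boundary inclusion $\tau_1\sd\partial\Delta^n\hookrightarrow\tau_1\sd\Delta^n$ is a cofibration. Already for $n=1$ this fails: the map in question is the inclusion of the discrete poset $\bigl\{\{0\},\{1\}\bigr\}$ into $\{0\}\to\{0,1\}\leftarrow\{1\}$. Push it out in $\Cat$ along the fold map $\bigl\{\{0\},\{1\}\bigr\}\to[0]$: the pushout is the category with two objects and two distinct parallel arrows between them. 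If the inclusion were a cofibration, then (cofibrations being stable under cobase change, and $[0]$ being cofibrant) this pushout would be a cofibrant object of $\Cat$, contradicting Thomason's Proposition~5.7, cited in the introduction, that every cofibrant object is a poset; the compatibility of the three structures recalled there transfers the contradiction to $\Pos$ and $\Ac$. Note that this also destroys the earlier part of your induction: your assembly of $\tau_1\sd\partial\Delta^n$ as an iterated pushout proceeds along the maps $\tau_1\sd(\partial\Delta^m\hookrightarrow\Delta^m)$ for $m<n$, which by the same argument are not cofibrations, so colimit preservation of $\tau_1\sd$ alone buys you nothing.

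The homotopical claim underlying your proposed repair is also wrong, which is why no refinement of that strategy can succeed. The inclusion $\sd\partial\Delta^n\to\sd\Delta^n$ realizes to $S^{n-1}\hookrightarrow D^n$ and is therefore \emph{not} a weak equivalence. What the horn fillings $\Lambda^{m+1}_{m+1}\subset\Delta^{m+1}$ build is the inclusion of the cone \emph{point} $\Delta^0\to\sd\Delta^n$ (each filler adjoins a cone simplex together with its base simplex), whereas relative to the base $\sd\partial\Delta^n$ every cone cell is attached along its full boundary, the base face being already present. Consequently the cone inclusion can never be a retract of a relative cell complex built from the generating trivial cofibrations $\thol(\Lambda^m_k\subset\Delta^m)$: such a retract is a trivial cofibration, in particular a weak equivalence. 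A correct proof has to go through the vertex inclusions themselves rather than through the boundary inclusion; for instance, $\{\{k\}\}$ is a sieve in $\tau_1\sd\Delta^n$ whose generated cosieve (the sets containing $k$) has $\{k\}$ as minimum, so $i_k$ is a Dwyer map --- but upgrading that observation to ``$i_k$ is a cofibration'' is precisely the nontrivial content of Cisinski's cited lemma, not something your formal arguments supply.
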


\begin{proof}
	This is found in the proof of \cite[Lemma~1]{cisinski:dnpr}.
\end{proof}

\begin{mylem}\label{lem:imapycof}
	Let $\cat{C}$ be a category. If $\tel\to\cat{C}$ is a cofibration, $\cat{C}$ is cofibrant.
\end{mylem}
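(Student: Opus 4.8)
The plan is to exhibit $\iel\to\cat{C}$ as a composite of two cofibrations. Since cofibrations in any model category are closed under composition, it suffices to factor the canonical map through the terminal object $\tel$ and to check that each of the two stages is a cofibration. The second stage $\tel\to\cat{C}$ is a cofibration by hypothesis, so the entire argument reduces to verifying that the first stage $\iel\to\tel$ is a cofibration, that is, that $\tel$ is itself cofibrant.

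To establish this I would specialize Theorem~\ref{thm:sdiscof} to the case $n=0$. The standard $0$-simplex $\Delta^0=N(\tel)$ is a single point, so its barycentric subdivision is again a point, $\sd\Delta^0=\Delta^0$. Since the nerve $N$ is fully faithful, $\tau_1$ restricts to the identity on nerves of categories, giving $\tau_1\sd\Delta^0=\tau_1\Delta^0=\tel$. Hence Theorem~\ref{thm:sdiscof} asserts precisely that $\tel$ is cofibrant, which by definition is the statement that $\iel\to\tel$ is a cofibration.

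Composing the two maps, the morphism $\iel\to\tel\to\cat{C}$ is the unique map $\iel\to\cat{C}$, and being a composite of two cofibrations it is again a cofibration; therefore $\cat{C}$ is cofibrant, as desired. The only non-formal point in the whole argument is the identification $\tau_1\sd\Delta^0=\tel$, and this is the single place where one must unwind the definitions of $\sd$, $N$ and $\tau_1$; once that identification is in hand, the conclusion follows purely from the closure of cofibrations under composition, so I do not expect any genuine obstacle here.
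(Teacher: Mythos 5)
Your proof is correct and takes essentially the same route as the paper: both factor $\iel\to\cat{C}$ as $\iel\to\tel\to\cat{C}$, establish that $\tel$ is cofibrant, and conclude by closure of cofibrations under composition. The only (cosmetic) difference is the justification that $\tel$ is cofibrant --- you specialize Theorem~\ref{thm:sdiscof} to $n=0$ via the identification $\tau_1\sd\Delta^0=\tel$, while the paper notes that $\tel$ lies in the image of the left Quillen functor $\thol$; both are one-line appeals to the same surrounding facts.
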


\begin{proof}
	Since $\tel$ lies in the image of $\thol$, $\tel$ is cofibrant, and since cofibrations are closed under composition, $\iel\to\tel\to\cat{C}$ is a cofibration and thus, $\cat{C}$ is cofibrant.
\end{proof}

Moreover, note that since $\thol$ is a left Quillen functor, and monomorphisms in $\sSet$ are cofibrations, the image of a monomorphism under $\thol$ is a cofibration in the Thomason model structure. We will use this fact quite often throughout this paper.
\section{Trees, Zigzags, Chains and Semiattices}
\label{sec:tcl}
In this section, we  proof that every countable tree, every finite zigzag, every chain and every finite semilattice is cofibrant in the Thomason model structure.

\subsection{Finite Semiattices}
We first show that the semilattices constructed from a Boolean lattice by removing either the top, or the bottom element are cofibrant in Lemma~\ref{lem:bopcof} and \ref{lem:bnmiscof}, respectively. In Theorem~\ref{thm:sliscof} we use the resulting semilattices to construct arbitrary, finite semilattices as retracts of those. We start by giving some definitions.

\begin{mydef}
	Let $\cat{C}$ be a small category, $A\subseteq\cat{C}^{(0)}$. We denote by $\cat{C}\setminus A$ the full subcategory of $\cat{C}$ with object set $\cat{C}^{(0)}\setminus A$. In particular, if $A=\{x\}$, we simply write $\cat{C}\setminus x$.
\end{mydef}

\begin{mydef}
	Let $\cat{C}$ be a small category. We denote by $\cat{P}(\cat{C})$ the category which is given by the power set lattice of $\cat{C}^{(0)}$.
\end{mydef}

In the following, we will denote the minimal element of $\cat{P}(\cat{C})$ by $\eset$, as opposed to $\iel$ to avoid any confusion on whether we are talking about the minimal element of $\cat{P}(\cat{C})$, or the initial object in the ambient category.

\begin{mylem}\label{lem:bopcof}
	The category $\cat{P}([n])\setminus[n]$ is cofibrant and the inclusion $\tel\to\cat{P}([n])\setminus[n]$ of the minimum is a cofibration.
\end{mylem}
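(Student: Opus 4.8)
The plan is to reduce both assertions to a single statement about the least element. By Lemma~\ref{lem:imapycof} it suffices to prove that the inclusion $\tel\to\cat{P}([n])\setminus[n]$ sending $0$ to the minimum $\eset$ is a cofibration; cofibrancy of $\cat{P}([n])\setminus[n]$ is then automatic. So from here on I would concentrate entirely on producing this one cofibration, and forget about the cofibrancy statement.

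Next I would set up the combinatorial picture and the tools. The poset $\cat{P}([n])\setminus[n]$ is the face poset of $\partial\stdsimp{n}$ (the nonempty proper subsets of $[n]$) with a least element $\eset$ adjoined, so that its nerve is the \emph{lower} cone on $\sd\partial\stdsimp{n}$; by contrast $\tau_1\sd\stdsimp{n}=\cat{P}([n])\setminus\eset$ is the same face poset with a \emph{greatest} element adjoined, and this object is handed to us as cofibrant by Theorem~\ref{thm:sdiscof}. The two tools I would use are exactly Theorem~\ref{thm:sdiscof}, which makes each $\tau_1\sd\stdsimp{k}$ cofibrant and each singleton inclusion $\tel\to\tau_1\sd\stdsimp{k}$ a cofibration, and the remark after Lemma~\ref{lem:imapycof}, by which $\thol=\tau_1\sd^2$ carries monomorphisms of simplicial sets to cofibrations (in particular $\tel\to\thol\stdsimp{n}$ is a cofibration). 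The problem is thus to assemble the lower cone from subdivided simplices, whose cofibrancy we already control.

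Concretely I would build $\tel\to\cat{P}([n])\setminus[n]$ as a finite iterated pushout. Starting from the basepoint $\tel$ (which is to become $\eset$), I would attach the faces of $\partial\stdsimp{n}$ in order of increasing dimension, each face contributing a cell modelled on a subdivided simplex $\tau_1\sd\stdsimp{k}$; the attaching maps are singleton inclusions, hence cofibrations by Theorem~\ref{thm:sdiscof}, and collapsing the relevant boundary onto the already-constructed part is a pushout along such a cofibration. Since pushouts and finite composites of cofibrations are cofibrations, the resulting map $\tel\to\cat{P}([n])\setminus[n]$ is a cofibration, \emph{provided} one checks that the colimit of this gluing is exactly $\cat{P}([n])\setminus[n]$ with $\tel$ landing on $\eset$. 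The cases $n\le 2$ I would do by hand: the case $n=2$ is precisely the configuration drawn on $\sd^2\stdsimp{2}$, where the several preimages $x_{2_a},x_{2_b},\dots$ of a single element display the collapse explicitly, and this serves as the base of an induction on $n$.

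The main obstacle is the asymmetry between $\cat{P}([n])\setminus[n]$ and $\cat{P}([n])\setminus\eset=\tau_1\sd\stdsimp{n}$. The latter is literally a subdivision; the former is not. Indeed $\cat{P}([n])\setminus[n]$ has a \emph{unique} minimum and carries genuine joins (unions of proper subsets that remain proper), and neither feature is present in any object of the form $\thol X$, nor is it preserved by the obvious monotone maps out of a subdivided simplex. Consequently the minimum-inclusion cannot be produced formally, e.g.\ as $\thol$ of a single monomorphism or as a naive retract of some $\tau_1\sd\stdsimp{m}$; it has to be manufactured by the explicit collapse above. The real difficulty is therefore the bookkeeping that simultaneously creates the single least element $\eset$ together with the join structure, keeps every attaching map a cofibration, and identifies the colimit on the nose with $\cat{P}([n])\setminus[n]$.
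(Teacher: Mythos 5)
Your opening reduction via Lemma~\ref{lem:imapycof} is fine and agrees with the paper, but the engine of your proof---the iterated pushout---does not work with the tools you invoke. If the pushouts are literally taken along singleton inclusions $\tel\to\tau_1\sd\stdsimp{k}$ (the only cofibrations Theorem~\ref{thm:sdiscof} provides), then each step merely wedges a copy of $\tau_1\sd\stdsimp{k}$ onto the previous stage at a single object: nothing else of the new cell can get identified with the part already built in such a pushout, so every piece embeds into the colimit and every glue point is a cut vertex of its Hasse diagram. For $n\ge 2$ this can never yield $\cat{P}([n])\setminus[n]$: its Hasse diagram is $2$-connected (for instance $\eset<\{0\}<\{0,1\}>\{1\}>\eset$ is a cycle, and removing any single element leaves the diagram connected), and the poset is not isomorphic to any single $\tau_1\sd\stdsimp{k}$ either, since it has a unique minimum and $n+1$ maximal elements while $\tau_1\sd\stdsimp{k}$ has a unique maximum. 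So the colimit identification you defer to the end can never be checked. If instead you intend to attach each face $F$ along its boundary---which is what ``in order of increasing dimension'' really requires, since adding the element $F$ means gluing $\cat{P}(F)$ to the previous stage along $\cat{P}(F)\setminus F$---then the cofibration leg of the pushout must be a boundary-type inclusion such as $\cat{P}([k])\setminus[k]\to\cat{P}([k])$ or $\tau_1\sd\partial\stdsimp{k}\to\tau_1\sd\stdsimp{k}$. Neither is supplied by your tools: Theorem~\ref{thm:sdiscof} gives only point inclusions, and the remark after Lemma~\ref{lem:imapycof} covers only $\thol=\tau_1\sd^2$ (the \emph{double} subdivision) of monomorphisms, not single subdivisions. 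Establishing those boundary inclusions as cofibrations is essentially the same difficulty as the lemma itself, and your induction hypothesis (about $\tel\to\cat{P}([k])\setminus[k]$) is of the wrong shape to provide it.

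Moreover, the ``main obstacle'' you identify is not an obstacle, and ruling it out discards exactly the proof the paper gives. Uniqueness of the minimum and existence of joins are not reflected by retractions: a retract of a category without a minimum can have one. The paper exploits this via the functor $\xi\colon\Pos\to\Cat$ sending a poset to its lattice of nonempty chains ordered by inclusion, so that $\xi=\tau_1\sd N$ and $\xi^2=\thol N$. Sending $A=\{m_1\le\dotsb\le m_k\}$ to the chain $\{\eset,\{m_1\},\{m_1,m_2\},\dotsc,A\}$ and a chain $B$ to $\bigcup B$ exhibits $i_\eset\colon\tel\to\cat{P}([n])\setminus[n]$ as a retract of $\xi(i_\eset)$; applying $\xi$ once more exhibits $\xi(i_\eset)$ as a retract of $\xi^2(i_\eset)=\thol N(i_\eset)$, which is a cofibration because $N(i_\eset)$ is a monomorphism. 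Since cofibrations are closed under retracts, $i_\eset$ is a cofibration, and Lemma~\ref{lem:imapycof} finishes the proof. Note that $\xi(\cat{P}([n])\setminus[n])$ has many minimal elements and no minimum, yet retracts onto a poset with a minimum; your impossibility claim conflates properties preserved by a retraction with properties reflected by one, and it is precisely this ``formal'' retract argument---not a cell-by-cell gluing---that proves the lemma.
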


\begin{proof}
	Let $\xi\colon\mathbf{Pos}\to\Cat$ be the functor which maps a poset $P$ to the lattice of non-empty chains in $P$, ordered by inclusion. Then $\xi = \tau_1\sd N$ and $\xi^2 = \tau_1\sd^2 N$ (cf. \cite{cisinski:dnpr}). Consider the diagram
	\begin{equation*}
		\begin{tikzcd}
			\ar[d, "i_\eset"][0]\ar[r]&\ar[d, "\iota_\eset"]\xi[0]\ar[r]&\ar[d, "i_\eset"][0]\\
 			\cat{P}([n])\setminus[n]\ar[r, "i"]&\xi(\cat{P}([n])\setminus[n])\ar[r, "p"]&\cat{P}([n])\setminus[n]
		\end{tikzcd}\text{,}
	\end{equation*}
	where $i_\eset$ and $\iota_\eset$ are the minima inclusions, and $i$ and $p$ are given as follows: let $A=\{m_1,m_2,\dotsc,m_k\}\in\cat{P}([n])\setminus[n]$, such that $m_1\le m_2\le\dotsb\le m_k$. We set
	\begin{equation*}
		i(A) = \left\{\{\eset\},\{m_1\},\{m_1,m_2\},\dotsc,\{m_1,m_2,\dotsc,m_k\}\right\}
	\end{equation*}
	and given $B\in\cat{P}([n])\setminus[n]$, we set $p(B)=\bigcup B$. 

	It is easy to see, that $p\circ i=\id$, hence $i_\eset$ is a retract of $\iota_\eset$. If we apply $\xi$ to the whole diagram, we get that $\xi(i_\eset)=\iota_\eset$ is a retract of $\xi(\iota_\eset)$, which is a cofibration since $\xi(\iota_\eset) = \thol N(i_\eset)$ and $N(i_\eset)$ is a monomorphism in $\sSet$. Hence $i_\eset$ is a cofibration and $\cat{P}([n])\setminus[n]$ is cofibrant.
\end{proof}

\begin{mylem}\label{lem:bnmiscof}
	The category $\cat{P}([n])\setminus\eset$ is cofibrant and the inclusions of the respective minima are cofibrations.
\end{mylem}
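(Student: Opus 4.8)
The plan is to sidestep the subdivision-retraction machinery used for Lemma~\ref{lem:bopcof} entirely, and instead observe that $\cat{P}([n])\setminus\eset$ is, up to isomorphism of categories, nothing but $\tau_1\sd\Delta^n$. Once this is seen, the statement is an immediate consequence of Theorem~\ref{thm:sdiscof}, which already records both the cofibrancy of $\tau_1\sd\Delta^n$ and the fact that the relevant vertex inclusions are cofibrations.

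First I would make the identification precise. The nondegenerate simplices of $\Delta^n = N([n])$ are exactly the strictly increasing chains in $[n]$, that is, the nonempty subsets of $\{0,1,\dots,n\}$, and the vertices of $\sd\Delta^n$ are precisely these nonempty subsets, with the face relation given by inclusion. Passing to the fundamental category, $\tau_1\sd\Delta^n$ is therefore the poset of nonempty subsets of $\{0,\dots,n\}$ ordered by inclusion, which is by definition $\cat{P}([n])\setminus\eset$. This isomorphism of categories is the heart of the argument.

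Next I would match up the distinguished objects on either side. The minimal elements of $\cat{P}([n])\setminus\eset$ are exactly the nonempty subsets admitting no smaller nonempty subset, namely the singletons $\{k\}$ for $k=0,\dots,n$; under the identification above these are precisely the vertices hit by the inclusions $i_k\colon[0]\to\tau_1\sd\Delta^n$, $0\mapsto\{k\}$, of Theorem~\ref{thm:sdiscof}. Hence the inclusions of the respective minima of $\cat{P}([n])\setminus\eset$ are exactly the maps $i_k$, and Theorem~\ref{thm:sdiscof} asserts both that each $i_k$ is a cofibration and that $\tau_1\sd\Delta^n$ is cofibrant. This finishes the proof.

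The only point requiring care---and it is a matter of bookkeeping rather than a genuine obstacle---is the verification of the identification $\cat{P}([n])\setminus\eset = \tau_1\sd\Delta^n$ together with the claim that the minima correspond exactly to the singleton vertices; once this is in place there is nothing further to do. It is worth noting the contrast with Lemma~\ref{lem:bopcof}: there the relevant poset $\cat{P}([n])\setminus[n]$ of \emph{proper} subsets is not of the form $\tau_1\sd\Delta^m$, which is exactly why a retraction onto $\xi(\cat{P}([n])\setminus[n])$ had to be constructed, whereas here the poset is literally a subdivided simplex and Theorem~\ref{thm:sdiscof} applies on the nose.
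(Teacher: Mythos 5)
Your proposal is correct and is exactly the paper's own argument: the paper proves this lemma in one line by noting $\cat{P}([n])\setminus\eset\cong\tau_1\sd\Delta^n$ and invoking Theorem~\ref{thm:sdiscof}. Your write-up merely spells out the isomorphism and the matching of minima with singleton vertices in more detail, which is fine but not a different route.
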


\begin{proof}
	Since $\cat{P}([n])\setminus\eset\cong\tau_1\sd\Delta^{n}$, this follows from \ref{thm:sdiscof}.
\end{proof}

\begin{mythm}\label{thm:sliscof}
	Every finite semilattice is cofibrant.
\end{mythm}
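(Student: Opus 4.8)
The plan is to realize an arbitrary finite semilattice as a retract of one of the two building blocks provided by Lemmas~\ref{lem:bopcof} and~\ref{lem:bnmiscof}, and then to invoke the fact that cofibrations, and hence cofibrant objects, are closed under retracts. I would treat the meet-semilattice case in full and obtain the join-semilattice case dually, using~\ref{lem:bnmiscof} in place of~\ref{lem:bopcof}.

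So let $\cat{L}$ be a finite meet-semilattice with minimum $\hat 0$, and write $\uparrow x=\{y\in\cat{L}:x\le y\}$ for its principal filters. Since $\cat{L}$ has $n+1$ elements for some $n$, any bijection of its underlying set with $\{0,\dotsc,n\}$ induces an isomorphism of posets $\cat{P}(\cat{L})\setminus\cat{L}\cong\cat{P}([n])\setminus[n]$ carrying the top element to $[n]$ and the minimum $\eset$ to $\eset$. It therefore suffices to exhibit $\cat{L}$ as a retract of $\cat{P}(\cat{L})\setminus\cat{L}$ that is compatible with minima. The key point is the choice of maps: I would take the inclusion $i\colon\cat{L}\to\cat{P}(\cat{L})\setminus\cat{L}$ to be $i(x)=\cat{L}\setminus\uparrow x$, and the retraction $p\colon\cat{P}(\cat{L})\setminus\cat{L}\to\cat{L}$ to be $p(A)=\bigwedge(\cat{L}\setminus A)$. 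One then checks: $i$ is monotone and injective, and lands in proper subsets since $x\in\uparrow x$ forces $x\notin i(x)$, so $i(x)\neq\cat{L}$; $p$ is well defined because for every proper subset $A\subsetneq\cat{L}$ the complement $\cat{L}\setminus A$ is a nonempty finite subset of a meet-semilattice and hence has a meet, and it is monotone because $A\subseteq A'$ gives $\cat{L}\setminus A\supseteq\cat{L}\setminus A'$ and thus $p(A)\le p(A')$; and finally $p\circ i=\id$, since $\cat{L}\setminus i(x)=\uparrow x$ and $\bigwedge\uparrow x=x$. Moreover $i(\hat 0)=\cat{L}\setminus\uparrow\hat 0=\eset$ and $p(\eset)=\bigwedge\cat{L}=\hat 0$, so both maps send minimum to minimum.

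These two value identities say precisely that the minimum inclusion $\tel\to\cat{L}$ is a retract of the minimum inclusion $\tel\to\cat{P}(\cat{L})\setminus\cat{L}$: the relevant retraction diagram has the identity of $\tel$ along its top row, the composite $\cat{L}\xrightarrow{i}\cat{P}(\cat{L})\setminus\cat{L}\xrightarrow{p}\cat{L}$ (equal to $\id$) along its bottom row, and the respective minimum inclusions as its vertical maps; its left square commutes by $i(\hat 0)=\eset$ and its right square by $p(\eset)=\hat 0$. Since $\tel\to\cat{P}(\cat{L})\setminus\cat{L}$ is a cofibration by~\ref{lem:bopcof} and cofibrations are closed under retracts, $\tel\to\cat{L}$ is a cofibration, and so $\cat{L}$ is cofibrant by~\ref{lem:imapycof}. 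For a finite join-semilattice with maximum $\hat 1$ the dual maps $i(x)=\{y:y\le x\}$ and $p(A)=\bigvee A$ exhibit it as a retract of $\cat{P}([n])\setminus\eset$, which is cofibrant by~\ref{lem:bnmiscof}; closure of cofibrant objects under retracts then finishes this case as well.

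I expect the only genuine subtlety to be the well-definedness and monotonicity of the retraction $p$, and this is exactly what dictates the form of the embedding. Using the complement-of-filter map $i(x)=\cat{L}\setminus\uparrow x$ together with $p(A)=\bigwedge(\cat{L}\setminus A)$ means $p$ only ever computes meets of \emph{nonempty} subsets, which always exist in a finite meet-semilattice; the naive principal-ideal embedding would instead force the retraction to take joins (or an empty meet), demanding a maximum that need not be present, and would fail. Once the identities $i(\hat 0)=\eset$ and $p(\eset)=\hat 0$ are verified, the remaining bookkeeping, namely that the single pair $(i,p)$ simultaneously witnesses the retract of objects and the retract of minimum inclusions, is routine.
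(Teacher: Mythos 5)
Your proof is correct and follows essentially the same route as the paper: both arguments exhibit a finite join-semilattice as a retract of $\cat{P}(L)\setminus\eset$ via principal ideals and joins (Lemma~\ref{lem:bnmiscof}), and a finite meet-semilattice as a retract of $\cat{P}(M)\setminus M$ (Lemma~\ref{lem:bopcof}), using closure of cofibrations under retracts. Your explicit maps $i(x)=\cat{L}\setminus\mathord{\uparrow}x$ and $p(A)=\bigwedge(\cat{L}\setminus A)$ are precisely what the paper's dualization-plus-complementation argument unwinds to, so the difference is purely presentational (your additional tracking of minimum inclusions is a bonus the paper defers to Corollary~\ref{cor:slinccof}).
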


\begin{proof}
	Let $L$ be a finite join--semilattice. Define
	\begin{align*}
		i\colon L&\to\cat{P}(L)\setminus \eset\text{,}\\
		x&\mapsto\left\{\,y\in L\,\middle|\, y\le x\,\right\}
	\intertext{and}
		p\colon\cat{P}(L)\setminus\eset&\to L\text{,}\\
		A&\mapsto\bigvee A\text{,}
	\end{align*}
	where $\bigvee A$ denotes the join over all elements of $A$ in $L$. Then $p\circ i = \id$, hence $L$ is a retract of $\cat{P}(L)\setminus\eset$ and since $\cat{P}(L)\setminus\eset$ is cofibrant by Lemma~\ref{lem:bnmiscof}, so is $L$. Thus every join--semilattice is cofibrant.

	Let $M$ be a finite meet--semilattice. Since $M^\mathrm{op}$ is a join--semilattice and $\cat{P}(M) = \cat{P}(M^\mathrm{op})$ we obtain a retract diagram
	\begin{equation*}
		\begin{tikzcd}
			M^\mathrm{op}\ar[r,"i"]&{\cat{P}}(M)\setminus\eset\ar[r,"p"]& M^\mathrm{op}
		\end{tikzcd}\text{,}
	\end{equation*}
	where $i$ and $p$ are given as before. Dualizing every object we obtain a retract
	\begin{equation*}
		\begin{tikzcd}
			M\ar[r,"i^\mathrm{op}"]&(\cat{P}(M)\setminus\eset)^\mathrm{op}\ar[r,"p^\mathrm{op}"]& M
		\end{tikzcd}\text{.}
	\end{equation*}
	But $(\cat{P}(M)\setminus\eset)^\mathrm{op}$ is isomorphic to $\cat{P}(M)\setminus M$, which is cofibrant by Lemma~\ref{lem:bopcof} and hence so is $M$.
\end{proof}

\begin{mycor}\label{cor:slinccof}
	Let $S$ be a finite semilattice, and $i_m\colon\tel\to S$ an inclusion that maps the single element of $\tel$ to a local minimum $m$ in $S$, then $i_m$ is a cofibration.
\end{mycor}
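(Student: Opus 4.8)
The plan is to reuse the two retract diagrams built in the proof of Theorem~\ref{thm:sliscof} and to invoke that cofibrations are closed under retracts in the arrow category. The only genuinely new work is to locate the image of the local minimum $m$ under the section of each retraction and to recognise that image as an inclusion we have already shown to be a cofibration. Throughout I read \emph{local minimum} as a minimal element, that is, an element $m$ with no $y<m$.

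First I would treat the join--semilattice case. Let $S=L$ be a finite join--semilattice with $n+1$ elements and let $m$ be a minimal element. Recall from Theorem~\ref{thm:sliscof} the maps $i\colon L\to\cat{P}(L)\setminus\eset$, $x\mapsto\{\,y\mid y\le x\,\}$, and $p\colon\cat{P}(L)\setminus\eset\to L$, $A\mapsto\bigvee A$, with $p\circ i=\id$. Since $m$ is minimal we have $\{\,y\mid y\le m\,\}=\{m\}$, so $i(m)=\{m\}$ and $p(\{m\})=m$. Under the isomorphism $\cat{P}(L)\setminus\eset\cong\tau_1\sd\Delta^n$ of Lemma~\ref{lem:bnmiscof} the singleton $\{m\}$ is a vertex $\{k\}$, so the inclusion $\iota_{\{m\}}\colon\tel\to\cat{P}(L)\setminus\eset$ of $\{m\}$ is one of the inclusions $i_k$ and hence a cofibration by Theorem~\ref{thm:sdiscof}. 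I would then assemble the retract diagram
\begin{equation*}
\begin{tikzcd}
\tel\ar[d,"i_m"]\ar[r]&\tel\ar[d,"\iota_{\{m\}}"]\ar[r]&\tel\ar[d,"i_m"]\\
L\ar[r,"i"]&\cat{P}(L)\setminus\eset\ar[r,"p"]&L
\end{tikzcd}
\end{equation*}
whose rows compose to identities and whose squares commute by the two identities $i(m)=\{m\}$ and $p(\{m\})=m$. This exhibits $i_m$ as a retract of the cofibration $\iota_{\{m\}}$, so $i_m$ is a cofibration.

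For the meet--semilattice case I would dualise, exactly as in Theorem~\ref{thm:sliscof}. A finite meet--semilattice $M$ has a unique minimum $m=\bigwedge M$, which is therefore its only local minimum. Dualising the retract $M^\mathrm{op}\to\cat{P}(M)\setminus\eset\to M^\mathrm{op}$ gives a retract $M\to(\cat{P}(M)\setminus\eset)^\mathrm{op}\to M$, and under the complementation isomorphism $(\cat{P}(M)\setminus\eset)^\mathrm{op}\cong\cat{P}(M)\setminus M$ the image of $m$ is $\eset$: the section sends $m$ to its $M^\mathrm{op}$--downset, which is the $M$--upset $\{\,y\mid m\le y\,\}=M$ because $m$ is the bottom of $M$, and complementation carries $M$ to $\eset$. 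Thus, restricted to the terminal object, the dualised diagram exhibits $i_m$ as a retract of the minimum inclusion $\tel\to\cat{P}(M)\setminus M$, which is a cofibration by Lemma~\ref{lem:bopcof}; hence $i_m$ is a cofibration.

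The one step that needs care, and which I regard as the main obstacle, is the bookkeeping in the meet case: correctly tracking where the minimum of $M$ lands after passing to $M^\mathrm{op}$, dualising, and applying complementation, and then checking that the resulting square on $\tel$ genuinely restricts the retract diagram of Theorem~\ref{thm:sliscof}. Once the target is identified as $\eset$ (respectively $\{m\}$ in the join case), everything reduces to closure of cofibrations under retracts together with the already--established cofibrancy of the minimum and vertex inclusions.
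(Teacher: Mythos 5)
Your proposal is correct and matches the paper's own proof essentially step for step: in the join case the paper likewise exhibits $i_m$ as a retract of the singleton inclusion $\iota_m\colon\tel\to\cat{P}(L)\setminus\eset$ (a cofibration by Theorem~\ref{thm:sdiscof}), and in the meet case it uses the same complementation isomorphisms to realise $i_m$ as a retract of $\iota_\eset\colon\tel\to\cat{P}(M)\setminus M$, which is a cofibration by Lemma~\ref{lem:bopcof}. Your bookkeeping of where $m$ lands (at $\{m\}$, respectively at $\eset$) is exactly the content of the paper's commuting retract diagrams.
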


\begin{proof}
	Let $L$ be a finite join--semilattice, $m\in L$ be a local minimum. Define
	\begin{align*}
		i_m\colon\tel&\to L\text{,}\\
		0&\mapsto m
	\intertext{and}
		\iota_m\colon\tel&\to\cat{P}(L)\setminus\eset\text{,}\\
		0&\mapsto\left\{m\right\}\text{.}
	\end{align*}		
	We obtain a diagram
	\begin{equation*}
		\begin{tikzcd}
			\tel\ar[r]\ar[d,"i_m"]&\tel\ar[r]\ar[d,"\iota_m"]&\tel\ar[d,"i_m"]\\
			L\ar[r,"i"]&{\cat{P}}(L)\setminus\eset\ar[r,"p^\mathrm{op}"]& L
		\end{tikzcd}\text{,}
	\end{equation*}
	where $i$ and $p$ are given as in the proof of Theorem~\ref{thm:sliscof}. It is easy to see that every square commutes and since $\iota_m$ is a cofibration by Theorem~\ref{thm:sdiscof}, so is $i_m$.

	Now let $M$ be a finite meet--semilattice, $m\in M$ be the minimal element. Define
	\begin{align*}
		i_m\colon\tel&\to M\text{,} \\
		0&\mapsto m
	\intertext{as before, and}
		\iota_\eset\colon\tel&\to\cat{P}(M)\setminus M\text{,}\\
		0&\mapsto\eset\text{.}
	\end{align*}
	Consider the isomorphisms
	\begin{align*}
		(\cat{P}(M)\setminus\eset)^\mathrm{op}\xrightarrow{\varphi}{\cat{P}(M)\setminus M}\xrightarrow{\psi}{(\cat{P}(M)\setminus\eset)^\mathrm{op}}\text{,}
	\end{align*}
	both of which are given by mapping a subset $A\subseteq M$ to its complement. We obtain a retract diagram
		\begin{equation*}
		\begin{tikzcd}
			\tel\ar[rr]\ar[d,"i_m"]&&\tel\ar[rr]\ar[d,"\iota_\eset"]&&\tel\ar[d,"i_m"]\\
			M\ar[r,"i^\mathrm{op}"]&(\cat{P}(M)\setminus\eset)^\mathrm{op}\ar[r,"\varphi"]&\cat{P}(M)\setminus M\ar[r,"\psi"]&(\cat{P}(M)\setminus\eset)^\mathrm{op}\ar[r,"p"]& M
		\end{tikzcd}\text{,}
	\end{equation*}
	and since $\iota_\eset$ is a cofibration by Lemma~\ref{lem:bopcof}, so is $i_m$.
\end{proof}

\subsection{Chains}

\begin{mydef}
	Let $\cat{C}$ be a category. We call $\cat{C}$ a \emph{chain} if $\cat{C}$ is either isomorphic to a finite ordinal $[n]$, or to the natural numbers $\mathbb{N}$.
\end{mydef}

\begin{mythm}\label{thm:cacof}
	Every chain is cofibrant and the inclusion of the minimum is a cofibration.
\end{mythm}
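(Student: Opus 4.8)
The plan is to split the statement according to the two kinds of chains and to reduce the finite case entirely to the semilattice results already proved. If $\cat{C}$ is a finite chain $[n]$, then $\cat{C}$ is in particular a finite semilattice, so it is cofibrant by Theorem~\ref{thm:sliscof}. Its minimum $0$ is a local minimum, so the inclusion $\tel\to[n]$ of the minimum is a cofibration by Corollary~\ref{cor:slinccof}. This disposes of the finite case immediately, and the only real work is the infinite chain $\mathbb{N}$.

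For $\mathbb{N}$ I would use the presentation as a sequential colimit $\mathbb{N}=\colim_n[n]$ along the standard inclusions $[n]\hookrightarrow[n+1]$, and first show that each of these transition maps is a cofibration. The key observation is that $[n+1]$ is obtained from $[n]$ by gluing on a single new top element: identifying the bottom of $[1]$ with the top $n$ of $[n]$ produces exactly $[n+1]$. In other words, $[n]\to[n+1]$ is the cobase change of the minimum inclusion $\tel\to[1]$ along the map $\tel\to[n]$, $0\mapsto n$, in the pushout square
\begin{equation*}
	\begin{tikzcd}
		\tel\ar[r,"0\mapsto n"]\ar[d]&{[n]}\ar[d]\\
		{[1]}\ar[r]&{[n+1]}
	\end{tikzcd}\text{.}
\end{equation*}
Since $\tel\to[1]$ is a cofibration by Corollary~\ref{cor:slinccof} and cofibrations are stable under pushout, every transition map $[n]\to[n+1]$ is a cofibration.

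The minimum inclusion $\tel=[0]\to\mathbb{N}$ is then the colimit leg of the tower $[0]\to[1]\to[2]\to\dotsb$, that is, the countable transfinite composite of the cofibrations just produced; as the Thomason model structure is cofibrantly generated, cofibrations are closed under transfinite composition, so $\tel\to\mathbb{N}$ is a cofibration and $\mathbb{N}$ is cofibrant by Lemma~\ref{lem:imapycof}. The only point that requires care is the verification that the pushout above really computes $[n+1]$, with $[n]\to[n+1]$ the standard inclusion, and that this pushout and the subsequent colimit are formed identically in $\Pos$, $\Ac$ and $\Cat$; this follows from the fact that pushouts and colimits along cofibrations agree in all three structures, and concretely we are only concatenating two chains along a shared endpoint. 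I therefore expect no genuine obstacle beyond this bookkeeping.
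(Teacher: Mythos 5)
Your proposal is correct and follows essentially the same route as the paper: the finite case is delegated to Theorem~\ref{thm:sliscof} and Corollary~\ref{cor:slinccof}, and for $\mathbb{N}$ the paper likewise builds the chain as a sequential colimit $[0]\to[1]\to[2]\to\dotsb$ in which each transition map is a pushout of the minimum inclusion $\tel\to[1]$ (a cofibration by Corollary~\ref{cor:slinccof}), then invokes closure of cofibrations under transfinite composition and Lemma~\ref{lem:imapycof}. The only cosmetic difference is that the paper phrases the colimit as an explicitly constructed functor $X\colon\mathbb{N}\to\Cat$ rather than as the tower of standard inclusions, but the pushout squares and the concluding argument are identical.
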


\begin{proof}
	Let $\cat{C}$ be a chain. If $\cat{C}$ is finite, then it is cofibrant by Theorem~\ref{thm:sliscof} and the inclusion of the minimum is a cofibration by Corollary~\ref{cor:slinccof}. Assume that $\cat{C}$ is isomorphic to $\mathbb{N}$. We will construct a sequence
	\begin{align*}
		X\colon\mathbb{N}&\to\Cat\text{,}\\
			i&\mapsto X_i\text{,}\\
		(i\to i+1)&\mapsto F_i
	\end{align*}
	such that $X_0$ is cofibrant, $\colim_\mathbb{N} X\cong\cat{C}$, and the map $X_0\to\colim_\mathbb{N} X$ is a cofibration and the inclusion of the minimum, which yields that $\colim_\mathbb{N} X$ is cofibrant.
	Let $X_0 = {x_0}$. Assume that 
	\begin{align*}
		X_i = x_0\xrightarrow{f_0} x_1\xrightarrow{f_1}x_2\xrightarrow{f_2}\cdots\xrightarrow{f_{i-1}}x_i
	\end{align*}
	and let $\cat{D} = x\to y$. We construct $X_{i+1}$ from $X_i$ via the pushout
	\begin{equation*}
		\begin{tikzcd}
			\tel\ar[d, "h"']\ar[r, "f"] &\ar[d]\cat{D}\\
			X_i \ar[r, "F_i"] & X_{i+1}
		\end{tikzcd}\text{,}
	\end{equation*}
	where $f$ and $h$ are given by $f(0)=x$ and $h(0)=x_i$. Since $f$ is a cofibration by Corollary~\ref{cor:slinccof}, so is $F_i$. Moreover, since the class of cofibrations is closed under transfinite composition, the map $X_0\to\colim_\mathbb{N} X$ is a cofibration and thus $\colim_\mathbb{N} X$ is cofibrant. Furthermore $\cat{C}$ is a universal co-cone for $X$ by construction and thus $\colim_\mathbb{N} X\cong\cat{C}$.
\end{proof}

\subsection{Finite Zigzags}
We will proof that every finite zigzag is cofibrant by showing that a certain class of zigzags is cofibrant (Lemma~\ref{lem:chaincof}) and then glue together arbitrary finite zigzags from elements of this class (Theorem~\ref{thm:zziscof}). At first, we should give a definition of what we mean by zigzag.

\begin{mydef}
	A \emph{zigzag} is a category $\cat{Z}$, which is generated by a (possibly infinite) directed graph
	\begin{align*}
		\dotsb\leftrightarrow x_{i-1} \leftrightarrow x_{i} \leftrightarrow x_{i+1} \leftrightarrow\dotsb
	\end{align*}
	where $\leftrightarrow$ denotes either an arrow pointing to the left, or to the right.
\end{mydef}

Alternatively one might say a zigzag is a category generated by a total order
\begin{align*}
	\dotsb\to x_{i-2}\to x_{i-1}\to x_i\to x_{i+1}\to x_{i+2}\to\dotsb
\end{align*}
where some of the generating arrows are flipped:
\begin{align*}
	\dotsb\leftarrow x_{i-2}\to x_{i-1}\leftarrow x_i\leftarrow x_{i+1}\to x_{i+2}\to\dotsb
\end{align*}

\begin{mylem}\label{lem:chaincof}
	Let $\cat{Z}$ be a finite zigzag with a global maximum, then $\cat{Z}$ is cofibrant and the minimum inclusions are cofibrations.
\end{mylem}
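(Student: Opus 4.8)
The plan is to recognize that the hypothesis of a global maximum forces $\cat{Z}$ into a very restricted shape, turning it into a finite join--semilattice, so that the statement follows immediately from Theorem~\ref{thm:sliscof} and Corollary~\ref{cor:slinccof}.

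First I would pin down the shape. Write the generating graph as $x_0 \leftrightarrow x_1 \leftrightarrow \dotsb \leftrightarrow x_n$ and let $x_k$ be the global maximum. Since $\cat{Z}$ is the free category on an acyclic path graph, it is a poset in which $x_i \le x_j$ holds exactly when there is a directed path along the segment of the graph joining them. I would then argue by contradiction that every edge to the left of $x_k$ must point rightwards and every edge to its right must point leftwards: if some edge between $x_{j}$ and $x_{j+1}$ with $j<k$ pointed leftwards, then no directed path would run from $x_{j}$ towards $x_k$, so $x_{j}\not\le x_k$, contradicting maximality. Hence $\cat{Z}$ takes the \emph{tent} form
\[
	x_0 \to x_1 \to \dotsb \to x_k \leftarrow x_{k+1} \leftarrow \dotsb \leftarrow x_n\text{,}
\]
an element $x_k$ sitting atop two (possibly empty) descending chains, whose local minima are precisely the two endpoints $x_0$ and $x_n$.

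Next I would observe that this tent is a finite join--semilattice. The upper set of any $x_i$ is the ascending segment from $x_i$ up to the peak $x_k$; for $x_i$ and $x_j$ on the same side one upper segment is contained in the other, giving the join, and for $x_i, x_j$ on opposite sides the two upper segments meet exactly in $\{x_k\}$, so their join is $x_k$. Thus every pair has a least upper bound and $\cat{Z}$ is a join--semilattice. Theorem~\ref{thm:sliscof} then gives that $\cat{Z}$ is cofibrant, and Corollary~\ref{cor:slinccof} gives that the inclusion of either local minimum $x_0$ or $x_n$ is a cofibration, which is exactly the asserted statement.

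I do not expect a genuine obstacle here: the only real content is the combinatorial observation that a global maximum collapses a zigzag to a tent, and this is immediate from the directed--path description of the order, after which everything reduces to the already-established semilattice results. One could instead prove the lemma more constructively, building the tent from the peak $x_k$ by iterated pushouts along the cofibrations $\tel\to\cat{D}$ used in the proof of Theorem~\ref{thm:cacof} and attaching the two descending chains one arrow at a time; this would dovetail with the gluing construction in the following theorem, but the semilattice route is the shortest.
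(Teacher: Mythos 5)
Your main argument is correct, and it takes a genuinely different route from the paper. The paper proves the lemma by hand: it writes the zigzag in the tent form $x_0\to\dotsb\to x_i\leftarrow\dotsb\leftarrow x_{n+1}$ and constructs an explicit embedding $i\colon\cat{Z}\to\tau_1\sd\Delta^n$ (sending $x_k$ to initial or terminal segments of $\{0,\dotsc,n\}$) together with an explicit retraction $p\colon\tau_1\sd\Delta^n\to\cat{Z}$, so that $\cat{Z}$ is a retract of $\tau_1\sd\Delta^n$ and the two endpoint inclusions are retracts of the cofibrations $\iota_0,\iota_n$ of Theorem~\ref{thm:sdiscof}. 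You instead observe that the tent is a finite join--semilattice (elements on the same branch are comparable, and elements on opposite branches have the peak $x_k$ as their join), so cofibrancy is Theorem~\ref{thm:sliscof} and the minimum inclusions are Corollary~\ref{cor:slinccof} applied to the local minima $x_0$ and $x_n$. This reduction is shorter and conceptually cleaner, and you also supply the verification, which the paper leaves implicit, that a global maximum forces the tent shape. The two proofs are nevertheless cousins: unwinding Theorem~\ref{thm:sliscof} realizes $\cat{Z}$ as a retract of $\cat{P}(\cat{Z})\setminus\eset\cong\tau_1\sd\Delta^{n+1}$, one dimension larger than the paper's hand-crafted retract of $\tau_1\sd\Delta^n$, so both arguments ultimately exhibit the tent as a retract of a subdivided simplex; the paper's version is just more economical.

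One caveat: the constructive alternative you sketch in your closing remark does not work as stated. The cofibration $\tel\to\cat{D}$ used in Theorem~\ref{thm:cacof} is the inclusion of the \emph{source} $x$ of $\cat{D}=x\to y$, and pushouts along it attach a new arrow pointing upward out of the attaching point. Building the tent from the peak $x_k$ downward would require attaching arrows whose \emph{target} is glued to the existing poset, i.e.\ pushouts along the inclusion $\tel\to\cat{D}$, $0\mapsto y$, of the maximum --- and neither the paper nor your argument establishes that this map is a cofibration. This is precisely why both the paper's proof and your main argument resort to retract arguments rather than cell attachments for the tent itself; only the subsequent gluing of tents at shared local minima (Theorem~\ref{thm:zziscof}) proceeds by pushouts.
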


\begin{proof}
	The claim is trivial for zigzags with one or less elements. Hence let $\cat{Z}$ be a zigzag with $n+2$ elements and a global maximum. If $\cat{Z}$ is isomorphic to $[n+1]$, $\cat{Z}$ is cofibrant by Theorem~\ref{thm:sliscof} and we are done. Otherwise, we can write $\cat{Z}$ as 
	\begin{equation*}
		\cat{Z} = x_0\xrightarrow{f_0}\dotsb \xrightarrow{f_{i-2}}x_{i-1}\xrightarrow{f_{i-1}}x_i\xleftarrow{f_i}\dotsb \xleftarrow{f_n}x_{n+1}\text{.}
	\end{equation*}
	Assume without loss of generality that $i>n/2$. By Theorem~\ref{thm:sdiscof}, the inclusions 
	\begin{align*}
		\iota_k\colon[0]&\hookrightarrow \tau_1\sd \Delta^n\text{,}\\
		0&\mapsto \left\{k\right\}
	\end{align*}
	are cofibrations. We construct $\cat{Z}$ as a retract of $\tau_1\sd\Delta^n$ and the inclusions $i_0, i_{n+1}\colon \tel\to\cat{Z}$ of the minima as retracts of $\iota_0$ and $\iota_n$, respectively. Define
	\begin{align*}
		i\colon \cat{Z}&\to\tau_1\sd \Delta^n\text{,}\\
		x_k &\mapsto \begin{cases}
		           	\{0,1,\dotsc,k\}			& \text{if } k<i \\
				\{0,1,\dotsc,n\}			& \text{if } k=i \\
				\{k-1,k,\dotsc, n\}	& \text{if } k>i \\
		           \end{cases}
	\intertext{and}
		p\colon\tau_1\sd\Delta^n&\to\cat{Z}\text{,}\\
		\sigma = \{k_1,k_2,\dotsc,k_p\} &\mapsto 	\begin{cases}
								x_{\left|\sigma\right|-1}& \text{if } 0\le k_1,\dotsc, k_p < i \\
								x_{n+2-\left|\sigma\right|} &\text{if } i\le k_1,\dotsc, k_p\le n \\
								x_i & \text{else} \\
								\end{cases}\text{.}
	\end{align*}
	Then $p\circ i = \id$ and we obtain a retract diagram
 	\begin{equation*}
 		\begin{tikzcd}
 		\tel\ar[r]\ar[d,"i_k"]&\tel\ar[r]\ar[d, "\iota_l"]&\tel\ar[d,"i_k"]\\
 		\cat{Z}\ar[r, "i"]&\tau_1\sd N[n]\ar[r, "p"]&\cat{Z}
 		\end{tikzcd}
 	\end{equation*}
	where $k=l=0$ or $k=n+1$, $l=n$. Since $\iota_0$ and $\iota_{n}$ are cofibrations, so are $i_0$ and $i_{n+1}$. Hence $\cat{Z}$ is cofibrant by Lemma~\ref{lem:imapycof}. 
\end{proof}

\begin{mythm}\label{thm:zziscof}
	Every finite zigzag is cofibrant, and every inclusion of a minimum into a zigzag is a cofibration.
\end{mythm}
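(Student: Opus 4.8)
The plan is to reduce the general case to Lemma~\ref{lem:chaincof} by decomposing an arbitrary finite zigzag $\cat{Z}$ into sub-zigzags that each possess a global maximum and then re-assembling $\cat{Z}$ from these pieces by pushouts along their shared local minima. First I would cut $\cat{Z}$ at each of its local minima. Since local minima and local maxima alternate along a zigzag, each resulting piece $T_1,\dotsc,T_k$ (the \emph{teeth}) contains exactly one local maximum and hence has a global maximum: an interior piece rises from one valley to a peak and descends to the next, and a boundary piece running to a local-maximum endpoint is monotone. By Lemma~\ref{lem:chaincof} every tooth $T_j$ is therefore cofibrant, and the inclusion $\tel\to T_j$ of any of its local minima is a cofibration.

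Next I would rebuild $\cat{Z}$ tooth by tooth. Consecutive teeth $T_j$ and $T_{j+1}$ meet in a single shared valley $v_j$, which is a local minimum of both. Setting $Y_1=T_1$, I would form $Y_{j+1}$ as the pushout
\begin{equation*}
\begin{tikzcd}
\tel\ar[d]\ar[r]&T_{j+1}\ar[d]\\
Y_j\ar[r]&Y_{j+1}
\end{tikzcd}\text{,}
\end{equation*}
where the left map picks out $v_j\in Y_j$ (a point of $T_j\subseteq Y_j$) and the top map picks out $v_j\in T_{j+1}$. Since $\tel\to T_{j+1}$ is a cofibration, each map $Y_j\to Y_{j+1}$ is a cofibration. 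Composing the cofibration $\iel\to T_1=Y_1$ of Lemma~\ref{lem:chaincof} with the chain $Y_1\to Y_2\to\dotsb\to Y_k=\cat{Z}$ then shows that $\iel\to\cat{Z}$ is a cofibration, so $\cat{Z}$ is cofibrant.

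For the statement about minima, given any minimum $m$ of $\cat{Z}$ I would run the same construction but start the glueing from a tooth $T$ having $m$ as a local minimum, attaching the remaining teeth outward on both sides; since the teeth form a path and each attachment is again a pushout along a cofibration $\tel\to T_{\mathrm{new}}$, the inclusion $T\to\cat{Z}$ is a composite of cofibrations and hence a cofibration. Precomposing with the cofibration $\tel\to T$, $0\mapsto m$, from Lemma~\ref{lem:chaincof} exhibits $\tel\to\cat{Z}$, $0\mapsto m$, as a cofibration.

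I expect the only genuine obstacle to be verifying that the pushouts along the shared valleys reproduce $\cat{Z}$ exactly, rather than a category with spurious composite morphisms. This is where it is essential that each $v_j$ is a local minimum and therefore receives no arrows in either tooth, so that glueing introduces no new composites; this is what makes the amalgamation honest in $\Cat$, $\Ac$, and $\Pos$ alike, exactly as in the pushout argument of Theorem~\ref{thm:cacof}. The combinatorial decomposition into teeth and the reordering of the glueing for a chosen minimum are then routine once the alternation of local extrema along the zigzag has been set up carefully.
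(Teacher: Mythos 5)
Your proof is correct, and it rests on the same two pillars as the paper's argument: Lemma~\ref{lem:chaincof} applied to the maximal sub-zigzags possessing a global maximum, and closure of cofibrations under pushout and composition when gluing along shared local minima. The organization, however, differs in a way worth recording. The paper argues by induction on the number $N$ of local minima, peeling off a single tooth $\cat{Z}$ from a zigzag $\cat{Z}_{N-1}$; for that pushout to produce cofibrations on \emph{both} structure maps, the statement that minimum inclusions are cofibrations must be carried along in the inductive hypothesis, since the leg $\tel\to\cat{Z}_{N-1}$ is a cofibration only by induction. Your linear reassembly needs only the leg $\tel\to T_{j+1}$ into the freshly attached tooth to be a cofibration, which Lemma~\ref{lem:chaincof} supplies outright, and you recover the statement for an arbitrary minimum $m$ by re-anchoring the assembly at a tooth containing $m$; no strengthened induction is needed. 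Your version is also slightly more robust: the paper's inductive step presupposes a splitting of $\cat{Z}_N$ into a tooth with a global maximum and a zigzag with exactly $N-1$ local minima glued at a common minimum, which forces $\cat{Z}_N$ to have an endpoint that is a local minimum, so it does not literally apply to a zigzag both of whose ends are local maxima; your decomposition, which admits the monotone boundary pieces as (degenerate) teeth, covers all cases uniformly. Finally, your closing check that each gluing point receives no arrows in either piece, so that the pushouts create no spurious composites and agree in $\Pos$, $\Ac$ and $\Cat$, is exactly the verification these gluing arguments require and is the same point implicitly used throughout the paper.
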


\begin{proof}
	Given a zigzag $\cat{Z}_n$, we prove the claim by induction over the number $n$ of local minima. If $n=1$, $\cat{Z}_n$ is a either a chain, hence cofibrant and the inclusion of the minimum is a cofibration by Theorem~\ref{thm:cacof}, or can be obtained by gluing together two chains $\cat{C}_1$, $\cat{C}_2$ along their minima via the pushout
	\begin{equation*}
		\begin{tikzcd}
			\ar[d,"i"]\tel\ar[r,"j"]&\ar[d,"\alpha"]\cat{C}_1\\
			\cat{C}_2\ar[r, "\beta"]&\cat{Z}_n
		\end{tikzcd}
	\end{equation*}
	where $i$ and $j$ are the respective minimum inclusions. Since $i$ and $j$ are cofibrations by Theorem~\ref{thm:cacof}, so are $\alpha$ and $\beta$ and thus, in particular, the compositions $\alpha\circ j$ or $\beta\circ i$.

	Now assume the for every $n<N$, a zigzag with $n$ local minima is cofibrant, and every inclusion of a minimum is a cofibration. Let $\cat{Z}_N$ be a zigzag with $N$ local minima. There is a zigzag $\cat{Z}_{N-1}$ with $N-1$ local minima, and a zigzag $\cat{Z}$ with a global maximum, such that we can construct $\cat{Z}_N$ via a pushout
 	\begin{equation*}	
 		\begin{tikzcd}
 		\ar[d, "i_0"']\tel\ar[r, "k_{N-1}"] &  \cat{Z}_{N-1}\ar[d, "\iota"]\\
 		\cat{Z}\ar[r, "\kappa"] & \cat{Z}_N
 		\end{tikzcd}\text{,}
 	\end{equation*}
 	where $i_0$ and $k_{N-1}$ are inclusions of local minima, hence cofibrations. Thus, so are $\iota$ and $\kappa$ and therefore also the compositions 
	\begin{align*}
		\iel\to\cat{Z}\xrightarrow{\kappa}\cat{Z}_N\text{,}&\\
		\tel\xrightarrow{i_0}\cat{Z}\xrightarrow{\kappa}\cat{Z}_N\text{,}&\\
		\tel\xrightarrow{i_1}\cat{Z}\xrightarrow{\kappa}\cat{Z}_N\text{,}&
	\intertext{and}\\
		\tel\xrightarrow{k_j}\cat{Z}_{N-1}\xrightarrow{\kappa}\cat{Z}_N\text{,}&\qquad j=1,\dotsc, N-1\text{,}
	\end{align*}
	where $i_1$ is the other minimum inclusion of $\cat{Z}$, and $k_j$ are the minimum inclusions of $\cat{Z}_{N-1}$.
\end{proof}

\begin{myrem}
	Since the map $\iota$ from the previous proof is a cofibration, we can construct some infinite cofibrant zigzags with a countable number of objects via transfinite composition, similar to the methods used in proof of Lemma~\ref{lem:chaincof}. There are however zigzags, where this will not work. 

	Assume that $\cat{Z}$ is a countable infinite zigzag, with a finite number $N$ of local maxima. If 
	\begin{align*}
		\cat{Z} = \dotsb\leftrightarrows x_{i-1}\to x_{i}\leftarrow x_{i+1}\leftarrow\dotsb\text{,}
	\end{align*}
	such that there are no more local extrema to the right of $x_i$, then $\cat{Z}$ is not constructable as a pushout along a cofibration using the methods from the previous proof.
\end{myrem}

\subsection{Posets generated by directed trees}
By a directed tree, we mean a directed graph which is also a rooted tree, satisfying that every edge points away from the root. There is a natural poset structure on the set of vertices: given two vertices $x,y$, we say that $x\le y$ if and only if the unique path from the root to $y$ passes through $x$. We call the resulting poset the \emph{poset generated by the tree}.

Note that there is a natural grading on the resulting poset structure, where the rank of an element $x$ is given by the length of the minimal chain including $x$ and the root. We will now show, that any such poset is cofibrant in the Thomason model structure, and that the inclusion of the root is a cofibration:

\begin{mythm}
	Every countable poset generated by a directed tree is cofibrant, and the inclusion of the root is a cofibration.
\end{mythm}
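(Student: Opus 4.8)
The plan is to mimic the construction for chains in Theorem~\ref{thm:cacof}, building the poset up one vertex at a time by attaching leaves along cofibrations. Write $\cat{T}$ for the poset generated by a directed tree with root $r$. Since every vertex is joined to $r$ by a unique path, $r$ is the global minimum, and for each $y$ the principal down-set $\{x:x\le y\}$ is exactly the path from $r$ to $y$; in particular every vertex has only finitely many ancestors and hence finite rank. The only nonidentity morphisms below any vertex thus form a finite chain, which is what makes the pushouts below thin.

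First I would fix an enumeration of the countably many vertices as $v_0=r,v_1,v_2,\dotsc$ in which the parent of every vertex precedes it. Such an enumeration exists because the ancestor relation is well-founded with finite predecessor-sets: starting from any bijection of the vertex set with $\mathbb{N}$ and inserting, before each vertex, the finitely many of its ancestors not yet listed, one obtains the desired listing by a standard topological sort. Setting $X_n$ to be the full subposet of $\cat{T}$ on $\{v_0,\dotsc,v_n\}$, the parent-before-child condition guarantees that each $X_n$ is closed under passing to ancestors, hence is down-closed in $\cat{T}$ and is itself a tree poset with root $r$, with $X_0=\tel$.

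Next I would realise each inclusion $X_n\hookrightarrow X_{n+1}$ as a pushout. Let $p$ be the parent of $v_{n+1}$ and let $\cat{D}=x\to y$. With
\begin{align*}
	f\colon\tel&\to\cat{D}\text{,} & 0&\mapsto x\text{,}\\
	h\colon\tel&\to X_n\text{,} & 0&\mapsto p\text{,}
\end{align*}
form the pushout
\begin{equation*}
	\begin{tikzcd}
		\tel\ar[d,"h"']\ar[r,"f"]&\cat{D}\ar[d]\\
		X_n\ar[r]&X_{n+1}
	\end{tikzcd}\text{.}
\end{equation*}
One checks that this pushout in $\Cat$ is thin: the only new nonidentity morphisms are the composites $a\to p\to y$ for $a\le p$, and $y$ carries no outgoing nonidentity morphism, so the result is precisely $X_n$ with a new maximal element $v_{n+1}=y$ covering $p$, i.e.\ the full subposet $X_{n+1}$. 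Since $f$ is the inclusion of the minimum of the two-element chain $\cat{D}$, it is a cofibration by Corollary~\ref{cor:slinccof}, and therefore its pushout $X_n\to X_{n+1}$ is a cofibration.

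Finally, since $X_n$ is the full subposet on $\{v_0,\dotsc,v_n\}$ and every vertex is eventually listed, we have $\colim_{\mathbb{N}}X_n\cong\cat{T}$; as the class of cofibrations is closed under transfinite composition, the map $\tel=X_0\to\cat{T}$ is a cofibration, and this map is exactly the inclusion of the root. Cofibrancy of $\cat{T}$ then follows from Lemma~\ref{lem:imapycof}, and when $\cat{T}$ is finite the argument simply terminates at a finite stage. The step needing the most care is the verification that the categorical pushout above introduces no unexpected morphisms and genuinely returns the subtree $X_{n+1}$, together with the existence of the parent-respecting enumeration; both reduce to the finiteness of ancestor-sets, which is precisely where the tree hypothesis is used.
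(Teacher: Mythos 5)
Your proof is correct and follows the same overall strategy as the paper's: realize $\cat{T}$ as an $\mathbb{N}$-indexed colimit starting from the root, where each step is a pushout along the cofibration $\tel\to\cat{D}$ (the minimum inclusion into the two-element chain), then conclude by closure of cofibrations under pushout and transfinite composition together with Lemma~\ref{lem:imapycof}. The only real difference is the decomposition: the paper attaches all vertices of rank $i+1$ simultaneously, in a single pushout along a coproduct $\coprod_{j}\bigl(\coprod_{k}(\tel\to\cat{D})\bigr)$ indexed by the rank levels of the tree, whereas you attach one vertex per step, which forces your extra topological-sort argument producing a parent-before-child enumeration (available because the tree is countable and ancestor sets are finite). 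The paper's rank-by-rank grouping dispenses with any enumeration at the cost of invoking closure of cofibrations under coproducts; your vertex-by-vertex version avoids coproducts but needs the enumeration. Both are sound, and your explicit ``thinness'' check that the pushout in $\Cat$ really returns the full subposet $X_{n+1}$ is a point the paper leaves implicit.
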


\begin{proof}
	Let $\cat{T}$ be a countable poset generated by a directed tree, and $\operatorname{rk}\colon\cat{T}\to\mathbb{N}$ the associated rank function. We define a sequence
	\begin{align*}
		X\colon\mathbb{N}&\to\Cat\text{,}\\
			i&\mapsto X_i\text{,}\\
		(i\to i+1)&\mapsto F_i\text{,}
	\end{align*}
	such that $X_0$ is cofibrant, $\colim_\mathbb{N} X\cong \cat{T}$ and the map $X_0\to\colim_\mathbb{N} X$ is cofibrant and the inclusion of the root, which yields that $\cat{T}$ is cofibrant.
	For that purpose, let $X_0$ be the category with a single object $r$, and no non--identity morphisms. Assume that $X_i$ is a subposet of $\cat{T}$ that contains all elements with rank lower or equal to $i$. If the rank of $\cat{T}$ is lower than $i$, we set $X_{i+1}=X_i$. Otherwise we construct $X_{i+1}$ from $X_i$ as follows: Let $J\subseteq\cat{T}$ be the set of all elements with rank $i$. Given $j\in J$, we define 
	\begin{align*}
		K_j := \left\{\,t\in\cat{T}\,\middle|\,\operatorname{rk}(t)=i+1\text{ and } t\ge j\,\right\}\text{,}
	\end{align*}
	\ie the set of all rank $i+1$ elements that are smaller than $j$. Let $\cat{D}=x\to y$. We obtain $X_{i+1}$ via the pushout
	\begin{equation*}
		\begin{tikzcd}
		\ar[dd, "h"]\coprod\limits_{j\in J}\left(\coprod\limits_{k\in K_j}\tel\right)\ar[rrr,"\coprod\limits_{j\in J}\left(\coprod\limits_{k\in K_j}f\right) "]&&&\coprod\limits_{j\in J}\left(\coprod\limits_{k\in K_j}\cat{D}\right)\ar[dd]\\\\
		X_i\ar[rrr, "F_i"]&&&X_{i+1}
		\end{tikzcd}
	\end{equation*}
	where the maps $f$ and $h$ are given by
	\begin{align*}
		f\colon\tel&\to\cat{D}\text{,}\\
			0&\mapsto x\text{,}
	\intertext{and}
		h\colon \coprod\limits_{j\in J}\left(\coprod\limits_{k\in K_j}\tel\right)&\to X_i\\
			(0_k)_j &\mapsto j\text{.}
	\end{align*}
	Since $f$ is a cofibration, so is $\coprod_{j\in J}\left(\coprod_{k\in K_j}f\right)$ and hence $F_i$. Thus the transfinite composition $X_0\to\colim_\mathbb{N} X$---which is also the inclusion of the root---is a cofibration. Hence by Lemma~\ref{lem:imapycof}, $\colim_\mathbb{N} X$ is cofibrant. Furthermore $\cat{T}$ is a universal co-cone for $X$ by construction, which finishes the proof.
\end{proof}
\section{Cofibrant objects with a fixed number of elements}
\label{sec:finpos}
In this section, we proof that every poset with five or less elements is cofibrant and that their respective inclusions of minima are cofibrations. Most of those posets are already covered by previous theorems, so there are only a handful of posets which we have to construct by hand. Before starting with the proofs, we need a small lemma which we use throughout this section.
\begin{mylem}\label{lem:retpush} Let $Q$ be a poset, $A\subseteq Q$ a subset and $i\colon A\to[n]$ an injective map of sets. Let furthermore 
	\begin{align*}
		m\colon N(\coprod_{a\in A}\tel)&\to\Delta^n\\
		0_a&\mapsto i(a)
	\end{align*}
	be a map in $\sSet$. If there is a retract $h\colon\coprod_{a\in A}\tel\to Q$ of $\thol m$ satisfying $h(0_a)=a$, then the poset $P$ obtained by the pushout
	\begin{equation*}
		\begin{tikzcd}
			\ar[d, "h"]\coprod\limits_{a\in A}\tel\ar[r]&\ar[d,"\alpha"]\tel\\
			Q\ar[r]&P
		\end{tikzcd}
	\end{equation*}
	is cofibrant, and the inclusion $\alpha$ is a cofibration.
\end{mylem}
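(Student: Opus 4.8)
The plan is to reduce everything to a single statement: that the map $\alpha\colon\tel\to P$ is a cofibration. Once this is known, the cofibrancy of $P$ is immediate from Lemma~\ref{lem:imapycof}, since $\tel$ is cofibrant and $\alpha$ is a cofibration out of it. The whole point of the retract hypothesis is that it forces $h$ itself to be a cofibration; the map $\alpha$ is then nothing but the cobase change of $h$ along the terminal map $\coprod_{a\in A}\tel\to\tel$ in the defining square, and cofibrations are stable under pushout.

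So the first thing I would verify is that $m$ is a monomorphism in $\sSet$. The source $N(\coprod_{a\in A}\tel)$ is the nerve of a discrete category, whose nondegenerate simplices are exactly its vertices $0_a$, one for each $a\in A$; in every degree each simplex is a degeneracy of a unique vertex. Since $m$ sends $0_a$ to the vertex $i(a)$ of $\Delta^n$ and $i$ is injective, $m$ is injective on vertices and hence levelwise injective, i.e.\ a monomorphism. By the remark following Lemma~\ref{lem:imapycof}, the left Quillen functor $\thol$ then carries $m$ to a cofibration $\thol m$. Now the hypothesis says precisely that $h$ is a retract of $\thol m$ in the arrow category, so it fits into a commutative diagram
\begin{equation*}
\begin{tikzcd}
\coprod\limits_{a\in A}\tel \ar[r] \ar[d, "h"] & \thol N\bigl(\coprod\limits_{a\in A}\tel\bigr) \ar[r] \ar[d, "{\thol m}"] & \coprod\limits_{a\in A}\tel \ar[d, "h"] \\
Q \ar[r] & \thol\Delta^n \ar[r] & Q
\end{tikzcd}
\end{equation*}
whose horizontal composites are identities (the condition $h(0_a)=a$ pins down $h$ on objects and is part of this retract datum). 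Since the cofibrations of any model category are closed under retracts, $h$ is a cofibration, and therefore so is its cobase change $\alpha$; the conclusion follows.

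I expect the only genuinely delicate point to be the verification that $m$ is a monomorphism, which rests on recognising the source as the nerve of a \emph{discrete} category so that injectivity on vertices propagates to every degree. Everything after that is a mechanical application of two standard closure properties of cofibrations---under retracts and under pushout---combined with the already established fact that $\thol$ takes monomorphisms to cofibrations. In particular note that no cofibrancy assumption on $Q$ is needed: $Q$ enters only as the codomain of $h$, and the entire argument lives on the left-hand leg of the pushout square.
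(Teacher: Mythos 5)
Your proof is correct and follows exactly the paper's argument: $m$ is a monomorphism, so $\thol m$ is a cofibration, hence its retract $h$ is one too, hence so is its pushout $\alpha$, and cofibrancy of $P$ follows from Lemma~\ref{lem:imapycof}. The only difference is that you spell out the verification that $m$ is levelwise injective (via the nerve of a discrete category), which the paper simply asserts.
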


\begin{proof}
	Since $m$ is a monomorphism, $\thol m$ is a cofibration and thus, so is $h$. Hence $\alpha$ is a cofibration and by Lemma~\ref{lem:imapycof}, $P$ is cofibrant.
\end{proof}
\subsection{Posets with four or less elements}

\begin{mythm}\label{thm:lt3el}
	Every poset with three or less elements is cofibrant, and every inclusion of a local minimum into one of those posets is a cofibration.
\end{mythm}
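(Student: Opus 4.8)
The plan is to settle the finitely many isomorphism types of posets with at most three elements by reducing each to an earlier result, using the single structural observation that such a poset is either \emph{connected} --- in which case it is automatically a semilattice --- or a disjoint union of strictly smaller posets that we have already shown to be cofibrant. So the whole argument is a short case analysis once one auxiliary fact about coproducts is in place.

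First I would record that fact, which is valid in any model category. If $\cat{C}$ and $\cat{D}$ are cofibrant, then writing the coproduct $\cat{C}\amalg\cat{D}$ as the pushout of $\iel\to\cat{D}$ along $\iel\to\cat{C}$ exhibits the coproduct inclusion $\cat{C}\to\cat{C}\amalg\cat{D}$ as a cobase change of the cofibration $\iel\to\cat{D}$, hence as a cofibration; composing with $\iel\to\cat{C}$ then shows $\cat{C}\amalg\cat{D}$ is cofibrant, and iterating handles finite coproducts. Moreover, if $m$ is a local minimum lying in the summand $\cat{C}$ and $\tel\to\cat{C}$ is a cofibration, then the inclusion $\tel\to\cat{C}\amalg\cat{D}$ of $m$ factors as $\tel\to\cat{C}\to\cat{C}\amalg\cat{D}$, a composite of cofibrations, so it is again a cofibration. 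This uses only stability of cofibrations under pushout and composition and the cofibrancy of $\tel$ from Lemma~\ref{lem:imapycof}.

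With this in hand the enumeration is routine. Up to isomorphism the connected posets with at most three elements are the chains $[n]$ with $n\le 2$, the poset consisting of a least element below two incomparable elements (a meet--semilattice), and the poset consisting of a greatest element above two incomparable elements (a join--semilattice); each of these is a finite semilattice, hence cofibrant by Theorem~\ref{thm:sliscof}, and each inclusion of a local minimum is a cofibration by Corollary~\ref{cor:slinccof}. The disconnected types are the two-- and three--element antichains $\tel\amalg\tel$ and $\tel\amalg\tel\amalg\tel$, together with the union $[1]\amalg\tel$ of a two--chain and an isolated point. Since $\tel$ is cofibrant and $[1]$ is a cofibrant chain whose minimum inclusion is a cofibration by Theorem~\ref{thm:cacof}, the coproduct observation above shows that these posets are cofibrant and that every inclusion of a local minimum --- namely each point of an antichain, and both the bottom of the two--chain and the isolated point of $[1]\amalg\tel$ --- is a cofibration. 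The empty poset is the initial object, trivially cofibrant, with no local minimum to consider.

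I expect no genuine obstacle here: the only ingredient beyond the cited theorems is the formal behaviour of cofibrations under pushout and composition. The one point I would state explicitly, so that nothing is overlooked, is that \emph{every} connected poset on at most three elements is a semilattice; this is immediate from the above list, but it is the hinge that lets Theorem~\ref{thm:sliscof} and Corollary~\ref{cor:slinccof} carry the connected cases.
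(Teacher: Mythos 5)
Your proposal is correct and follows essentially the same route as the paper: connected posets on at most three elements are semilattices, handled by Theorem~\ref{thm:sliscof} and Corollary~\ref{cor:slinccof}, while disconnected ones are coproducts of cofibrant pieces. In fact you are slightly more thorough than the paper, which only remarks that coproducts of cofibrant objects are cofibrant; your explicit factorization $\tel\to\cat{C}\to\cat{C}\amalg\cat{D}$ of a local-minimum inclusion as a composite of cofibrations supplies the part of the claim about minima of disconnected posets that the paper's proof leaves implicit.
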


\begin{proof}
	Every connected poset with three or less elements is a semilattice, hence cofibrant by Theorem~\ref{thm:sliscof} and the inclusions are cofibrations by the Corollary~\ref{cor:slinccof}. Furthermore, since coproducts of cofibrant objects are cofibrant, every poset with three or less elements is cofibrant.
\end{proof}

\begin{mythm}\label{thm:posf}
	Every poset with four elements is cofibrant, and the respective inclusions of minima are cofibrations.
\end{mythm}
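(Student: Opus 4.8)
The plan is to reduce to connected posets, enumerate the ten of them, and isolate the single poset that is not already covered by the results of Section~\ref{sec:tcl}. First I would dispose of the disconnected posets: any disconnected poset $P$ on four elements is a coproduct of connected posets each with at most three elements, all cofibrant by Theorem~\ref{thm:lt3el}, and a coproduct of cofibrant objects is cofibrant. A local minimum $m$ lies in one component $\cat C$ (with $\le 3$ elements), and its inclusion factors as $\tel\to\cat C\hookrightarrow P$; the first map is a cofibration by Theorem~\ref{thm:lt3el}, and the coproduct injection $\cat C\hookrightarrow P$ is a cofibration, being a cobase change of $\iel\to\cat C'$ for the complementary cofibrant factor $\cat C'$, so the composite is a cofibration. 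For the connected case I would observe that nine of the ten posets are already settled: eight of them (among which the chain, the two claws and the diamond $B_2$) are finite join- or meet-semilattices, hence cofibrant with all local-minimum inclusions cofibrations by Theorem~\ref{thm:sliscof} and Corollary~\ref{cor:slinccof}, and the ninth is the four-element zigzag, covered with its minimum inclusions by Theorem~\ref{thm:zziscof}. This leaves exactly one poset, the crown $\cat K$ with minima $m_1,m_2$, maxima $M_1,M_2$, and all four relations $m_i<M_j$.

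For $\cat K$ I would use Lemma~\ref{lem:retpush}. Since $\cat K$ has no greatest element whereas every $\tau_1\sd\Delta^n$ has one (the vertex $\{0,\dots,n\}$), $\cat K$ cannot be a retract of a single subdivision, and one is forced to the double subdivision $\thol\Delta^2$. Take $Q$ to be the five-element zigzag $W=(a_1<M_1>m_2<M_2>a_2)$, which is cofibrant by Theorem~\ref{thm:zziscof}, and exhibit it as a retract of $\thol\Delta^2$ via the embedding $a_1\mapsto\{\{0\}\}$, $a_2\mapsto\{\{1\}\}$, $m_2\mapsto\{\{0,1,2\}\}$, and $M_1,M_2$ mapping to the two maximal flags $\{0\}\subset\{0,1\}\subset\{0,1,2\}$ and $\{1\}\subset\{0,1\}\subset\{0,1,2\}$. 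Seating $a_1,a_2$ at the corner vertices is the crucial point: it makes the inclusion $h\colon\tel\sqcup\tel\to W$ of $\{a_1,a_2\}$ a retract of $\thol m$ for the monomorphism $m\colon N(\tel\sqcup\tel)\to\Delta^2$ with $0_{a_1}\mapsto 0$, $0_{a_2}\mapsto 1$, so that the hypotheses of Lemma~\ref{lem:retpush} hold with $A=\{a_1,a_2\}$ and $n=2$. The lemma then gives that the pushout collapsing $a_1,a_2$ to a single point is cofibrant and that the inclusion of that point is a cofibration; a direct inspection of the pushout shows the collapse produces a new minimum lying below both $M_1$ and $M_2$, i.e.\ exactly $\cat K$. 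Hence $\cat K$ is cofibrant and $\tel\xrightarrow{m_1}\cat K$ (with $m_1$ the collapsed point) is a cofibration, and the remaining inclusion $\tel\xrightarrow{m_2}\cat K$ is obtained by composing it with the automorphism of $\cat K$ interchanging $m_1$ and $m_2$, hence is also a cofibration.

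The main obstacle is the crown, and within it the only unavoidable computation is producing the monotone retraction $p\colon\thol\Delta^2\to W$ and checking $p\circ i=\id$; its fibres are the shaded regions in the accompanying figure, and verifying monotonicity over all vertices of the doubly subdivided triangle is the one genuinely hands-on step. Everything else—the reduction to connected posets, the recognition of the eight semilattices and the one zigzag, and the formal consequences of Lemma~\ref{lem:retpush}—is bookkeeping. Conceptually it is worth stressing both why the second subdivision is forced (the crown has no extremum, unlike any $\tau_1\sd\Delta^n$, which always has a terminal object) and why placing the two collapsed points at corner vertices of $\thol\Delta^2$ is precisely what lets Lemma~\ref{lem:retpush} certify the minimum inclusion as a cofibration.
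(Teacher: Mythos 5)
Your proof is correct, and its skeleton---reduction to connected posets, eight semilattices handled by Theorem~\ref{thm:sliscof} and Corollary~\ref{cor:slinccof}, one zigzag, one crown---matches the paper's; the differences lie in how the two non-semilattices are treated. For the four-element zigzag you cite Theorem~\ref{thm:zziscof}, which is legitimate (it is proved earlier and independently of the present theorem) and saves the explicit pushout of $\cat{D}=(x\to y)$ and $\cat{E}=(b_1\to a\leftarrow b_2)$ that the paper performs. For the crown the routes genuinely diverge, even though both realize it as the pushout collapsing the two endpoints $a_1,a_2$ of the five-element fence $W=(a_1<M_1>m_2<M_2>a_2)$: everything hinges on the two-point inclusion $h\colon\tel\sqcup\tel\to W$ being a cofibration. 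The paper gets this for free by recognizing that $W\cong\thol\Delta^1$ and that $h$ \emph{is} $\thol(\partial\Delta^1\hookrightarrow\Delta^1)$, the image of a monomorphism under a left Quillen functor; you instead embed $W$ into $\thol\Delta^2$ with $a_1,a_2$ at the corner vertices and invoke Lemma~\ref{lem:retpush}, which obliges you to produce a monotone retraction $p\colon\thol\Delta^2\to W$ with $p\circ i=\id$ and $p(\{\{0\}\})=a_1$, $p(\{\{1\}\})=a_2$. You assert this retraction rather than exhibit it, but it does exist and can be described compactly: send the singleton chains $\{\{0\}\}$ and $\{\{1\}\}$ to $a_1$ and $a_2$, every longer chain containing $\{0\}$ to $M_1$, every longer chain containing $\{1\}$ to $M_2$ (no chain contains both), and every remaining vertex of $\thol\Delta^2$ to $m_2$; monotonicity and $p\circ i=\id$ are immediate from this description, so your argument is complete. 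The trade-off: the paper's observation makes the crown a two-line consequence of the left Quillen property and never leaves $\Delta^1$---which also shows that your claim of being ``forced'' to $\thol\Delta^2$ is misleading, since the crown is obtained as a pushout rather than as a retract---whereas your route, though heavier, runs entirely on Lemma~\ref{lem:retpush}, the same mechanism the paper uses for the five-element posets, so it is a coherent uniform alternative.
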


\begin{proof}
	Every poset with four elements that is a coproduct of cofibrant posets is cofibrant, hence every disconnected poset with four or less elements is cofibrant by Theorem~\ref{thm:lt3el}. Up to isomorphism, there are ten connected posets with four elements. Eight of those are semilattices, hence cofibrant by Theorem~\ref{thm:sliscof}, and their respective inclusions of minima are cofibrant by Corollary~\ref{cor:slinccof}.

	The only two posets that are not semilattices are
	\begin{align*}
		P_1 = \begin{tikzcd}[ampersand replacement=\&]
			y_1\&y_2\\
			x_1\ar[u]\ar[ur]\&x_2\ar[u]
		\end{tikzcd}
		&&\text{and}&&
		P_2 = \begin{tikzcd}[ampersand replacement=\&]
			y_1\&y_2\\
			x_1\ar[u]\ar[ur]\&x_2\ar[ul]\ar[u]
		\end{tikzcd}\text{.}
	\end{align*}
	Let
	\begin{align*}
		\cat{D} = \begin{tikzcd}[ampersand replacement=\&]
			y\\
			\ar[u]x	
		\end{tikzcd}
		&&\text{and}&&
		\cat{E} = \begin{tikzcd}[ampersand replacement=\&]
			\&a\&\\
			b_1\ar[ur]\&\&\ar[ul]b_2
		\end{tikzcd}\text{.}
	\end{align*}
	We construct $P_1$ from $\cat{D}$ and $\cat{E}$ via the pushout
	\begin{equation*}
		\begin{tikzcd}
			\ar[d, "\iota_{b_1}"]\tel\ar[r, "\iota_x"]&\cat{D}\ar[d, "\alpha"]\\
			\cat{E}\ar[r, "\beta"]&P_1
		\end{tikzcd}\text{,}
	\end{equation*}
	where $\iota_{b_1}$ and $\iota_x$ are given by $\iota_{b_1}(0)=b_1$ and $\iota_x(0)=x$ respectively. Since $\iota_{b_1}$ and $\iota_x$ are cofibrations by Theorem~\ref{thm:lt3el}, so are $\alpha$ and $\beta$ and hence---in particular---the compositions with the respective inclusions of minima into $\cat{D}$ and $\cat{E}$ and every inclusion of a minimum into $P_1$ can be written as such a composition.

	Regarding $P_2$, consider the pushout
	\begin{equation*}
		\begin{tikzcd}
			\thol\partial\Delta^1\ar[r]\ar[d,"h"]&\tel\ar[d, "\alpha"]\\
			\thol\Delta^1\ar[r]& P_2
		\end{tikzcd}\text{,}
	\end{equation*}
	where $h$ is the usual boundary inclusion. Since $h$ is a cofibration, so is $\alpha$. Hence by Lemma~\ref{lem:imapycof}, $P_2$ is cofibrant. Note that $\alpha$ is one of the inclusion $0\mapsto x_k$, and since $P_2$ is symmetric, the other inclusion has to be a cofibration as well.
\end{proof}
\subsection{Posets with five Elements}\label{sec:fivecof}
In this section, we will show that every poset with five or less elements is cofibrant, and that every inclusion of a minimum into one of those is a cofibration. As before, we only have to consider posets that are connected. According to the \emph{Chapel Hill Poset Atlas}\footnote{\url{http://www.unc.edu/~rap/Posets/index.html}}, there are up to isomorphism 44 connected posets with five elements. 25 of those are semilattices. Of the remaining 19, nine can be constructed via simple pushouts in a similar fashion to $P_1$  in the previous proof, \ie by gluing a category with two elements and a single non-identity morphism between those two to a cofibrant poset with four elements. Of the remaining ten, one is $\thol\Delta^1$. To see that the inclusion of $\{\{0,1\}\}$ is a cofibration, we have to construct it by gluing two copies of the poset $\cat{E}$ from the previous section together at one of their respective local minima.

There are nine posets left that have to be considered separately. Those are
\begin{alignat*}{3}
	P_1 =& \begin{tikzcd}[ampersand replacement=\&]
	      	\& y_1\& y_2\\
		x_1\ar[ur]\&x_2\ar[u]\ar[ur]\&x_3\ar[u]\ar[ul]
	      \end{tikzcd}\text{, }&&
	P_2 =& \begin{tikzcd}[ampersand replacement=\&]
	      	y_1\&\&y_2\\
		x_1\ar[u]\ar[urr]\&\ar[ul]\ar[ur]x_2\&\ar[ull]\ar[u]x_3
	      \end{tikzcd}\text{, }\\
	P_3 =& \begin{tikzcd}[ampersand replacement=\&]
	      	\& z\\
		y_1\ar[ur]\&y_2\\
		\ar[u]x_1\ar[ur]\&\&\ar[ul]\ar[uul]x_2
	      \end{tikzcd}\text{, }&&
	P_4 =& \begin{tikzcd}[ampersand replacement=\&]
		z_1\& z_2\\
		y_1\ar[u]\ar[ur]\&\ar[u]\ar[ul]y_2\\
		x\ar[u]
	      \end{tikzcd}\text{, }\\
	P_5 =& \begin{tikzcd}[ampersand replacement=\&]
	      	z_1\&\& z_2\\
		\& \ar[ul]y\ar[ur]\\
		x_1\ar[ur]\&\&\ar[ul]x_2
	      \end{tikzcd}\text{, }&&
	P_6 =& \begin{tikzcd}[ampersand replacement=\&]
	      	y_1\&y_2\& y_3\\
		\ar[u]x_1\ar[ur]\ar[urr]\&\&\ar[ull]\ar[ul]x_2\ar[u]
	      \end{tikzcd}\text{, }\\
	P_7 =& \begin{tikzcd}[ampersand replacement=\&]
	      	\& z\\
		y_1\ar[ur]\&\& \ar[ul]y_2\\
		x_1\ar[u]\ar[urr]\&\&\ar[ull]\ar[u]x_2
	      \end{tikzcd}\text{, }&&
	P_8 =& \begin{tikzcd}[ampersand replacement=\&]
		z_1\&\& z_2\\
		y_1\ar[u]\ar[urr]\&\&\ar[ull]\ar[u]y_2\\
		\&\ar[ul]x\ar[ur]
	      \end{tikzcd}\text{, }\\
	\text{and }P_9 =& \begin{tikzcd}[ampersand replacement=\&]
	       	z\\
		y_1\ar[u]\&y_2\\
		x_1\ar[u]\ar[ur]\&x_2\ar[u]\ar[ul]
	       \end{tikzcd}\text{.}
	\end{alignat*}

\begin{mylem}\label{lem:poiscof}
	The poset $P_1$ is cofibrant, and every inclusion of a minimum into $P_1$ is a cofibration
\end{mylem}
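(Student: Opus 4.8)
The plan is to realise $P_1$ as a retract of the cofibrant category $\thol\Delta^2$ and to extract the three minimum inclusions as cofibrations by feeding suitable retract data into Lemma~\ref{lem:retpush}. Since $\thol$ is a left Quillen functor and $\Delta^2$ is cofibrant, $\thol\Delta^2$ is cofibrant; once $P_1$ is displayed as a retract of it, cofibrancy of $P_1$ follows because cofibrant objects are closed under retracts (equivalently, $\iel\to P_1$ is a retract of the cofibration $\iel\to\thol\Delta^2$).

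First I would write down an explicit order-preserving section $s\colon P_1\to\thol\Delta^2$ and retraction $r\colon\thol\Delta^2\to P_1$ with $r\circ s=\id$, reading both off a labelling of the doubly subdivided triangle $\sd^2\Delta^2$: each of $x_1,x_2,x_3,y_1,y_2$ is assigned a vertex, and the fibres of $r$ are the indicated regions collapsed onto these elements. Verifying that $r$ is monotone and that $r\circ s=\id$ is the combinatorial core, and is checked by hand on the finitely many vertices of $\sd^2\Delta^2$.

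For the minimum inclusions I would invoke Lemma~\ref{lem:retpush}. The key point is that the corner vertices are the ones that manifestly carry a cofibration $\tel\to\thol\Delta^2$, being the images under the left Quillen functor $\thol$ of the monomorphisms $\{j\}\colon\Delta^0\to\Delta^2$, whereas the edge and barycentre vertices do not obviously do so. Hence, for each minimum $x$, I would produce a retract presentation of $P_1$ (or of a slightly enlarged poset) over some $\thol\Delta^n$ in which a copy of $x$ sits at a corner; then Lemma~\ref{lem:retpush}, applied with $A$ the set of these corner copies and $i\colon A\to[n]$ picking out the corresponding vertices, exhibits the inclusion of $x$ as the cofibration $\alpha$. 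For $x_1$ a single corner suffices, so $A=\{x_1\}$ and the collapse is trivial, giving $\alpha=i_{x_1}$. For $x_2$ there is a genuine obstruction: no single vertex of $\thol\Delta^2$ lies above all three minima at once, because the three vertex faces $\{0\},\{1\},\{2\}$ do not form a chain, so $y_1$ cannot dominate three corners simultaneously. I would therefore use the split presentation in which $x_2$ is replaced by two elements $x_{2_a},x_{2_b}$ placed at two corners, and collapse $A=\{x_{2_a},x_{2_b}\}$ back to $x_2$ via Lemma~\ref{lem:retpush}, which at the same time reproves cofibrancy. Finally, the automorphism of $P_1$ interchanging $x_2$ and $x_3$ (and fixing $x_1,y_1,y_2$) shows that the inclusion of $x_3$ is a cofibration as soon as that of $x_2$ is.

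The hard part will be the explicit bookkeeping on $\sd^2\Delta^2$: choosing the vertex assignments and the collapsing regions so that the retractions are monotone, satisfy $r\circ s=\id$, and place the relevant minimum at a corner, all simultaneously. Everything else---cofibrancy of $\thol\Delta^2$, the cofibration property of $\thol$ on monomorphisms, and the passage from the retract data to the conclusion---is routine given Lemma~\ref{lem:retpush} and Theorem~\ref{thm:sdiscof}.
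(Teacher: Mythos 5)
Your strategy has a fatal obstruction at its very first step, and the obstruction propagates to your treatment of $i_{x_1}$. The poset $P_1$ is \emph{not} a retract of $\thol\Delta^n$ for any $n$: the nerve $N(P_1)$ is the $4$-cycle on $x_2,y_1,x_3,y_2$ together with the pendant edge $x_1$--$y_1$, hence homotopy equivalent to a circle, while $N(\thol\Delta^n)\cong\sd^2\Delta^n$ has contractible realization. Since the nerve is a functor, maps $s\colon P_1\to\thol\Delta^n$ and $r\colon\thol\Delta^n\to P_1$ with $r\circ s=\id$ would exhibit the identity of $H_1(N(P_1);\mathbb{Z})\cong\mathbb{Z}$ as factoring through the first homology of a contractible space, which is impossible. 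So the explicit $s$ and $r$ you propose to ``check by hand'' do not exist. This alone does not kill the cofibrancy claim, because your $x_2$-argument---split $x_2$ into $x_{2_a},x_{2_b}$ at two corners, exhibit the split poset (whose nerve is a tree, so no such obstruction arises) as a retract of $\thol\Delta^2$, and apply Lemma~\ref{lem:retpush}---is exactly the paper's argument, and it yields cofibrancy of $P_1$ together with the cofibrations $i_{x_2}$ and, by the symmetry you correctly note, $i_{x_3}$.

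What the obstruction does kill is your handling of $i_{x_1}$. Taking $A=\{x_1\}$ with ``trivial collapse'' in Lemma~\ref{lem:retpush} requires, by the very hypothesis of that lemma, that $Q=P_1$ itself be a retract of $\thol\Delta^n$---exactly what was just ruled out. Your hedge of passing to an enlarged poset does not repair this: with a singleton $A$ the pushout of the lemma returns the enlarged poset rather than $P_1$, and the collapse map identifying $x_{2_a}$ with $x_{2_b}$ is not a cofibration (Thomason cofibrations are retracts of Dwyer maps, in particular injective on objects), so there is no way to compose back down to $P_1$. This is precisely why the paper gives a second, genuinely different construction for $i_{x_1}$: it glues the two endpoints of $\thol\Delta^1$ onto the two maxima of the four-element poset $Q$ given by $x_1<y_1$, $x_2<y_1$, $x_2<y_2$, obtaining a seven-element poset $\tilde P$ whose nerve, like $N(P_1)$, contains a cycle; then $\tel\to Q\to\tilde P$ is a composite of cofibrations (Theorem~\ref{thm:posf} plus the pushout), and $i_{x_1}$ is exhibited as a retract of this composite. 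The idea you are missing is that for $i_{x_1}$ the middle object of the retract diagram must itself have nontrivial $H_1$, so it can never be any $\thol\Delta^n$ and must be manufactured by gluing. Note that you have the difficulty exactly backwards: the case you flag as obstructed ($x_2$) is the one your method handles, while the case you call trivial ($x_1$) is the one that needs a new idea.
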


\begin{proof}
	Let
	\begin{equation*}
		\tilde{P} = \begin{tikzcd}
		           	&y_1&&y_2\\
				x_{2_a}\ar[ur]&x_1\ar[u]&\ar[ul]x_3\ar[ur]&\ar[u]x_{2_b}
		           \end{tikzcd}\text{,}
	\end{equation*}
	and
	\begin{align*}
		h\colon\thol\partial\Delta^1&\to\tilde P\text{,}\\
		\{\{0\}\}&\mapsto x_{2_a}\text{,}\\
		\{\{1\}\}&\mapsto x_{2_b}\text{.}
	\end{align*}
	Then $P_1$ is given by the pushout diagram
	\begin{equation*}
		\begin{tikzcd}
			\ar[d, "h"]\thol\partial\Delta^1\ar[r]&\tel\ar[d, "i_{x_2}"]\\
			\tilde P\ar[r]&P_1
		\end{tikzcd}\text{,}
	\end{equation*}
	where $i_{x_2}(0)=x_2$. We have to show that $h$ is a cofibration. For that purpose, let
	\begin{align*}
		m\colon\partial\Delta^1&\to\Delta^2\text{,}\\
		0&\mapsto 0\text{,}\\
		1&\mapsto 2
	\end{align*}
	in $\sSet$. Since $m$ is a monomorphism, $\thol m$ is a cofibration. We will construct $h$ as a retract of $\thol m$. Consider the embedding of $\tilde P$ into $\thol\Delta^2$ given as follows:
	\begin{equation*}
	\poinsd\text{.}
	\end{equation*}
	Folding this along the axis between $x_1$ and $x_{2_b}$, we obtain
	\begin{equation*}
	\poinsdf\text{.}
	\end{equation*}
	From here, it is easy to see that we can obtain $h$ as a retract of $\thol m$ as indicated in the diagram above. Hence $i_{x_2}$ is a cofibration and by symmetry, so is $i_{x_3}\colon\tel\to P_1$, given by $i_{x_3}(0)=x_3$. Thus by Lemma~\ref{lem:imapycof}, $P_1$ is cofibrant.

	To show that the inclusion
	\begin{align*}
		i_{x_1}\colon\tel&\to P_1\text{,}\\
		0&\mapsto x_1
	\end{align*}
	is a cofibration, we need to construct $P_1$ differently. Consider the poset $Q$ given by
	\begin{equation*}
		Q = \begin{tikzcd}
			y_1&y_2\\
		    	\ar[u]x_1&\ar[ul]\ar[u]x_2
		    \end{tikzcd}\text{,}
	\end{equation*}
	and let
	\begin{align*}
		f\colon\thol\partial\Delta^1&\to Q\text{,}\\
		\{\{0\}\}&\mapsto y_1\text{,}\\
		\{\{1\}\}&\mapsto y_2\text{.}
	\end{align*}
	Taking the pushout
	\begin{equation*}
		\begin{tikzcd}
			\ar[d, "h"]\thol\partial\Delta^1\ar[r, "f"]&\ar[d,"\alpha"]Q\\
				\thol\Delta^1\ar[r]&\tilde P
		\end{tikzcd}\text{,}
	\end{equation*}
	where $h$ is the usual boundary inclusion, we obtain the poset $\tilde P$ given by
	\begin{equation*}
		\tilde P = \begin{tikzcd}
				z_1&z_2\\
				y_1\ar[u]&y_2\ar[u]\ar[ul]&y_3\ar[ul]\\
				&x_2\ar[ul]\ar[ur]&x_1\ar[u]
			\end{tikzcd}
	\end{equation*}
	The map $\iota_{x_1}\colon\tel\to Q$, given by $\iota_{x_1}(0)=x_1$ is a cofibration by Theorem~\ref{thm:posf}, and since $h$ is a cofibration, so is $\alpha$ and in particular the composition $\alpha\circ\iota_{x_1}$. We will construct $i_{x_1}$ as a retract of $\alpha\circ\iota_{x_1}$. For that purpose, let
	\begin{align*}
		i\colon P_1&\to\tilde P\\
		x_1&\mapsto x_1\text{,}&x_3&\mapsto y_2\text{,}\\
		x_2&\mapsto x_2\text{,}&y_k&\mapsto z_k
	\intertext{and}
		p\colon\tilde P&\to P_1\\
		x_1&\mapsto x_1\text{,}	&y_1&\mapsto y_1\text{,}	&z_1&\mapsto y_1\text{,}\\
		x_2&\mapsto x_2\text{,}	&y_2&\mapsto x_3\text{,}	&z_2&\mapsto y_2\text{,}\\
					&&y_3&\mapsto y_2\text{.}
	\end{align*}
	It is easy to see that $p\circ i=\id$ and that this gives $i_{x_1}$ as a retract of $\alpha\circ\iota_{x_1}$.
\end{proof}

\begin{mylem}
	The poset $P_2$ is cofibrant, and every inclusion of a minimum into $P_2$ is a cofibration.
\end{mylem}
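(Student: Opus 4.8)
The plan is to reproduce the first half of the proof of Lemma~\ref{lem:poiscof}, taking advantage of the fact that $P_2$ is the complete bipartite poset in which each of $x_1,x_2,x_3$ lies below each of $y_1,y_2$. Every permutation of $\{x_1,x_2,x_3\}$ is an automorphism of $P_2$, so it is enough to prove that $P_2$ is cofibrant and that the single inclusion $i_{x_1}\colon\tel\to P_2$, $0\mapsto x_1$, is a cofibration; the other two minimum inclusions are then conjugate to $i_{x_1}$ by an automorphism and hence also cofibrations. This is the simplification over $P_1$, where $x_1$ was not symmetric to $x_2,x_3$ and required a separate construction.

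First I would split $x_1$ into two minima and form the auxiliary poset
\[
	\tilde P=\begin{tikzcd}[ampersand replacement=\&]
		y_1\&\&\&y_2\\
		x_{1_a}\ar[u]\&x_2\ar[ul]\ar[urr]\&x_3\ar[ull]\ar[ur]\&x_{1_b}\ar[u]
	\end{tikzcd}\text{,}
\]
in which $x_{1_a}\le y_1$, $x_{1_b}\le y_2$ and $x_2,x_3\le y_1,y_2$. Collapsing $\{x_{1_a},x_{1_b}\}$ to a single point recovers exactly $P_2$, so $P_2$ is the pushout of $\tel\leftarrow\tel\sqcup\tel\xrightarrow{h}\tilde P$ with $h(0_{x_{1_a}})=x_{1_a}$ and $h(0_{x_{1_b}})=x_{1_b}$. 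This is the situation of Lemma~\ref{lem:retpush} with $Q=\tilde P$, $A=\{x_{1_a},x_{1_b}\}$, $n=2$ and the injection $x_{1_a}\mapsto0$, $x_{1_b}\mapsto2$, so that $m\colon\partial\Delta^1\to\Delta^2$ sends the two endpoints to $0$ and $2$. Everything then reduces to exhibiting $h$ as a retract of $\thol m$.

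To do so I would embed $\tilde P$ into $\thol\Delta^2$, which I identify with the poset of nonempty chains in $\cat{P}([2])\setminus\eset$ ordered by inclusion. Concretely, send
\[
	x_{1_a}\mapsto\{\{0\}\}\text{,}\quad x_{1_b}\mapsto\{\{2\}\}\text{,}\quad x_2\mapsto\{\{0,2\}\}\text{,}\quad x_3\mapsto\{\{0,1,2\}\}\text{,}
\]
\[
	y_1\mapsto\{\{0\}\subset\{0,2\}\subset\{0,1,2\}\}\text{,}\quad y_2\mapsto\{\{2\}\subset\{0,2\}\subset\{0,1,2\}\}\text{.}
\]
One checks directly that this is a full order embedding: the two maximal chains $y_1,y_2$ share the subchain $\{0,2\}\subset\{0,1,2\}$, whose two members provide the common lower bounds $x_2,x_3$, while the minimal elements $\{\{0\}\}$ and $\{\{2\}\}$ — which are exactly the two vertices in the image of $\thol m$ — lie below $y_1$ and $y_2$ respectively and below nothing else in the image. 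Folding $\thol\Delta^2$ onto this copy of $\tilde P$, in the style of the figures accompanying Lemma~\ref{lem:poiscof}, would supply an order-preserving retraction $r$ with $r\circ s=\id$ and $r\circ\thol m=h$, presenting $h$ as a retract of $\thol m$. By Lemma~\ref{lem:retpush}, $P_2$ is then cofibrant and $i_{x_1}=\alpha$ is a cofibration, and the symmetry argument of the first paragraph finishes the proof.

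The main obstacle is the construction and verification of the fold $r$. In the proof of $P_1$ only one element lay below both maxima, whereas here both $x_2$ and $x_3$ must be common lower bounds of $y_1$ and $y_2$; this forces $y_1$ and $y_2$ to be realised as two maximal chains agreeing in their top two members, which tightens the configuration and makes it less transparent that the other vertices of $\thol\Delta^2$ can be folded onto the six-element image without creating spurious comparabilities. Producing this explicit retraction — best recorded by an embedding/folding picture analogous to those used for $P_1$ — and checking that it is order-preserving is the crux of the argument.
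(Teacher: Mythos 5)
Your symmetry reduction and your pushout set-up are both sound: identifying $x_{1_a}$ with $x_{1_b}$ in your six-element poset $\tilde P$ does produce $P_2$, so by Lemma~\ref{lem:retpush} everything hinges on exhibiting $h$ as a retract of $\thol m$. But that step --- which you correctly single out as the crux and leave as a folding ``to be produced'' --- is not merely delicate, it is impossible. Consider the element $e=\{\{0,2\}\subset\{0,1,2\}\}$ of $\thol\Delta^2$, i.e.\ the union of the two chains $s(x_2)=\{\{0,2\}\}$ and $s(x_3)=\{\{0,1,2\}\}$. It satisfies $s(x_2),s(x_3)\le e\le s(y_1),s(y_2)$, so any order-preserving retraction $r$ with $r\circ s=\id$ would have to send $e$ to an element of $\tilde P$ lying above both $x_2$ and $x_3$ and below both $y_1$ and $y_2$; no such element exists, since the only common upper bounds of $x_2,x_3$ in $\tilde P$ are $y_1,y_2$ themselves, and they are incomparable. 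Nor can this be repaired by a cleverer embedding or a larger simplex: $\thol\Delta^n$ is the poset of nonempty chains of nonempty subsets of $[n]$ ordered by inclusion, and in any such chain poset two elements with a common upper bound have a join (the union of the two chains) lying below every common upper bound. This property passes to retracts (given $a,b\le c,d$ in the retract, apply $r$ to $s(a)\vee s(b)$), and your $\tilde P$ visibly fails it at $x_2,x_3\le y_1,y_2$. Hence your $\tilde P$ is not a retract of any $\thol\Delta^n$, and Lemma~\ref{lem:retpush} can never be fed with this auxiliary poset. This is exactly the structural difference from $P_1$: there only one element ($x_3$) had to sit below both maxima, so the bounded-join property was not violated.

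The paper's proof avoids the obstruction by building its auxiliary poset by gluing rather than folding. It takes the four-element complete bipartite poset $Q$ (with $a_1,a_2\le b_1,b_2$), already cofibrant with cofibrant minimum inclusions by Theorem~\ref{thm:posf}, and pushes the cofibration $\thol(\partial\Delta^1\hookrightarrow\Delta^1)$ out along the map $\thol\partial\Delta^1\to Q$, $\{\{k\}\}\mapsto b_k$. The resulting seven-element poset $\tilde P$ acquires two new points $z_1,z_2$ above $y_1,y_2$ and a point $x_2$ below $z_1,z_2$ --- precisely the intermediate elements your $\tilde P$ lacks --- and $P_2$ is a retract of it, with section $x_k\mapsto x_k$, $y_k\mapsto z_k$ and retraction fixing $x_k,y_k$ and collapsing $z_k\mapsto y_k$. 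This exhibits $i_{x_1}$ as a retract of the composite cofibration $\tel\to Q\to\tilde P$, and your symmetry argument then finishes the proof. To salvage your approach you would have to enlarge $\tilde P$ so that every pair of elements bounded above acquires a join before attempting to realize it as a retract of $\thol\Delta^n$; as written, the argument has a fatal gap.
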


\begin{proof}
	 Let
	\begin{align*}
		m\colon\partial\Delta^1&\to\Delta^1\text{,}\\
		0&\mapsto 0\text{,}\\
		1&\mapsto 1
	\end{align*}
	in $\sSet$. Since $m$ is a monomorphism, $\thol m$ is a cofibration. Let $Q$ be the poset
	\begin{equation*}
		\begin{tikzcd}
			b_1 & b_2 \\
			a_1\ar[u]\ar[ur] & a_2\ar[u]\ar[ul]
		\end{tikzcd}\text{.}
	\end{equation*}
	Consider the pushout diagram
	\begin{equation*}
	\begin{tikzcd}
		\ar[d,"\thol m"]\thol\partial\Delta^1 \ar[r,"i_b"]& Q\ar[d, "\alpha"]\\
		\thol\Delta^1\ar[r, "\beta"]&\tilde P
	\end{tikzcd}\text{,}
	\end{equation*}
	where $i_b$ is given by $i_b(\{\{k\}\}) = b_k$. The poset $\tilde P$ is given by 
	\begin{equation*}
		\tilde P = \begin{tikzcd}
		    	& y_1\ar[d]\\
			& z_1\\
			x_1\ar[uur]\ar[ddr] & \ar[u]x_2\ar[d] &\ar[uul]\ar[ddl]x_3\\
			& z_2\\
			& y_2\ar[u]
		    \end{tikzcd}
	\end{equation*}
	and the map $\alpha$ by
	\begin{align*}
		\alpha\colon Q&\to\tilde P\text{,}\\
			a_1&\mapsto x_1\text{,}\\
			a_2&\mapsto x_3\text{,}\\
			b_k&\mapsto y_k
	\end{align*}
	Since $\thol m$ is a cofibration, so is $\alpha$ and in particular the compositions $\alpha\circ \iota_{a_k}$, where
	\begin{align*}
		\iota_{a_k}\colon\tel&\to Q\text{,}\\
		0&\mapsto a_k
	\end{align*}
	for $k=1,3$. Let
	\begin{align*}
		i_{x_k}\colon\tel&\to P_2\text{,}\\
		0&\mapsto x_k\text{,}
	\end{align*}
	where $k=1,2,3$. We will construct $i_{x_1}$ as a retract of $\iota_{b_1}$ via the retract diagram
	\begin{equation*}
		\begin{tikzcd}
			\ar[d, "i_{x_1}"]\tel\ar[r]& \ar[d,"\iota_{b_1}"]\tel\ar[r]&\tel\ar[d,"i_{x_k}"]\\
			P_2\ar[r, "i"]& \tilde P\ar[r, "p"]& P_2 
		\end{tikzcd}\text{,}
	\end{equation*}
	where $i$ and $p$ are given as
	\begin{align*}
		i\colon P_2&\to\tilde P\text{,}\\
		x_k&\mapsto x_k\text{,}\\
		y_k&\mapsto y_k\text{,}
	\intertext{and}
		p\colon \tilde P&\to P_2\text{,}\\
		x_k&\mapsto x_k\text{,}\\
		y_k&\mapsto y_k\text{,}\\
		z_k&\mapsto y_k\text{.}
	\end{align*}
	It is easy to see that $p\circ i=\id$. Hence $i_{x_1}$ is a cofibration and by symmetry, so are $i_{x_2}$ and $i_{x_3}$.
\end{proof}

\begin{mylem}
	The poset $P_3$ is cofibrant, and every inclusion of a minimum into $P_3$ is a cofibration.
\end{mylem}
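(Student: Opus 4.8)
The plan is to treat $P_3$ exactly as $P_1$ was handled in Lemma~\ref{lem:poiscof}. Since $P_3$ has the two incomparable minima $x_1,x_2$ and the two incomparable maxima $y_2,z$, it is not a semilattice and cannot be built by gluing a two--element category onto a cofibrant four--element poset, so I first split one minimum, realize the split poset as a retract of a subdivided simplex, and then recover $P_3$ by re--identifying the split point. Concretely, I would introduce the auxiliary poset $\tilde P$ obtained from $P_3$ by splitting $x_2$ into a copy $x_{2_a}$ lying below $y_2$ and a copy $x_{2_b}$ lying below $z$, that is
\[
	\tilde P = \bigl(\,x_1<y_1<z,\ x_1<y_2,\ x_{2_a}<y_2,\ x_{2_b}<z\,\bigr)\text{.}
\]
Then $P_3$ is the pushout of $\tel\leftarrow\thol\partial\Delta^1\xrightarrow{H}\tilde P$, where $H$ sends the two points of $\thol\partial\Delta^1\cong\tel\sqcup\tel$ via $\{\{0\}\}\mapsto x_{2_a}$ and $\{\{1\}\}\mapsto x_{2_b}$; identifying these two minima produces precisely the comparabilities $x_2<y_2$ and $x_2<z$ and creates no further relations, so the pushout is indeed $P_3$.

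Because $i_{x_2}\colon\tel\to P_3$ is the cobase change of $H$, it suffices to show that $H$ is a cofibration; cofibrancy of $P_3$ then follows from Lemma~\ref{lem:imapycof}. I would establish this in the same way as in Lemma~\ref{lem:poiscof}: take $m\colon\partial\Delta^1\to\Delta^2$ to be the monomorphism $0\mapsto 0$, $1\mapsto 2$, so that $\thol m$ is a cofibration, and then exhibit $H$ as a retract of $\thol m$. For this I would embed $\tilde P$ into $\thol\Delta^2=\tau_1\sd^2\Delta^2$ with $x_{2_a}$ and $x_{2_b}$ placed at the vertices $\{0\}$ and $\{2\}$, and fold the doubly subdivided triangle onto the image of $\tilde P$ to obtain a retraction $p$ with $p\circ i=\id$; reading $H$ off the resulting retract diagram displays it as a retract of $\thol m$, hence a cofibration.

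It remains to show that the other minimum inclusion $i_{x_1}\colon\tel\to P_3$ is a cofibration, which, as in the second half of Lemma~\ref{lem:poiscof}, calls for a separate construction rather than a symmetry argument (here $x_1$ lies below three elements and $x_2$ below two, so the two minima are not interchangeable). I would mirror that second construction: start from a suitable cofibrant four--element poset $Q$ in which $x_1$ appears as a local minimum, so that the vertex inclusion $\iota_{x_1}\colon\tel\to Q$ is a cofibration by Theorem~\ref{thm:posf} (via Corollary~\ref{cor:slinccof}); glue $\thol\Delta^1$ onto $Q$ along two of its elements through the boundary inclusion $\thol\partial\Delta^1\hookrightarrow\thol\Delta^1$, whose cobase change $\alpha$ is a cofibration, so that $\alpha\circ\iota_{x_1}$ is a cofibration; and finally exhibit $i_{x_1}$ as a retract of $\alpha\circ\iota_{x_1}$ by writing down explicit maps $i\colon P_3\to\tilde P'$ and $p\colon\tilde P'\to P_3$ with $p\circ i=\id$ carrying $x_1$ to $x_1$.

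The main obstacle is the explicit geometry of the two retracts: one must pin down an order--embedding of $\tilde P$ into $\thol\Delta^2$ sending $x_{2_a},x_{2_b}$ to the prescribed vertices together with a compatible fold, and a second auxiliary poset $\tilde P'$ witnessing the cofibration $i_{x_1}$, just as the diagrams accompanying Lemma~\ref{lem:poiscof} do for $P_1$. Everything else---that the pushouts produce the stated posets, that $p\circ i=\id$, and that the relevant squares commute---is then a routine verification.
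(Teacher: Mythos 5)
Your proposal is correct, and for the minimum $x_2$ it coincides with the paper's proof: the paper also splits $x_2$ into two minimal points (its $\tilde P_2$), shows that the two--point inclusion $\thol\partial\Delta^1\to\tilde P_2$ is a retract of $\thol m$ for a monomorphism $m\colon\partial\Delta^1\to\Delta^2$ by embedding $\tilde P_2$ into $\thol\Delta^2$ and folding, and then recovers $P_3$ together with the cofibration $i_{x_2}$ from the identifying pushout; as in your sketch, all the real content sits in the explicit embedding--plus--fold, which the paper supplies as a picture and you defer. Where you genuinely diverge is $i_{x_1}$: the paper runs the same splitting trick a second time (its $\tilde P_1$, splitting $x_1$ into points under $y_1$ and under $y_2$, with a second embedding--and--fold diagram), whereas you reduce to Theorem~\ref{thm:posf} by gluing $\thol\Delta^1$ onto a four--element poset $Q$ along the boundary inclusion and exhibiting $i_{x_1}$ as a retract of $\alpha\circ\iota_{x_1}$, exactly as in the second half of Lemma~\ref{lem:poiscof}. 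Your route does close: take $Q$ to be the full subposet of $P_3$ on $\{x_1,y_1,y_2,z\}$ (a meet--semilattice, so $\iota_{x_1}$ is a cofibration by Corollary~\ref{cor:slinccof}, or directly by Theorem~\ref{thm:posf}), glue the fence $\thol\Delta^1$ at $y_2$ and $z$, and write $y_2<b_0>c<b_1>z$ for the attached fence; then $i\colon P_3\to\tilde P'$ with $x_1\mapsto x_1$, $y_1\mapsto y_1$, $y_2\mapsto b_0$, $z\mapsto b_1$, $x_2\mapsto c$, and $p\colon\tilde P'\to P_3$ with $b_0\mapsto y_2$, $b_1\mapsto z$, $c\mapsto x_2$ and all other elements fixed, are order--preserving, satisfy $p\circ i=\id$, and make the retract squares commute, so $i_{x_1}$ is a cofibration and cofibrancy of $P_3$ follows from Lemma~\ref{lem:imapycof} as you say. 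The trade--off: the paper's argument treats both minima uniformly at the price of a second subdivided--simplex picture; your argument for $x_1$ needs no subdivision geometry at all, only the already--established four--element case plus a seven--element retract that is easy to verify by hand.
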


\begin{proof}
	We will give two constructions of $P_3$. One for each of the inclusions of $x_1$ and $x_2$ respectively. Let 
	\begin{align*}
		\tilde P_1 = \begin{tikzcd}[ampersand replacement=\&]
 	             	z\& y_2\\
 			y_1\ar[u]\& x_2\ar[ul]\ar[u]\&x_{1_b}\ar[ul]\\
 			x_{1_a}\ar[u]
 			\end{tikzcd} && \text{and}&&
		\tilde P_2 = \begin{tikzcd}[ampersand replacement=\&] 
		       	\&z\\
			x_{2_a}\ar[ur]\&y_1\ar[u]\&x_1\\
			\&y_2\ar[u]\ar[ur]\&x_{2_b}\ar[u]
		        \end{tikzcd}\text{.}
	\end{align*}
	Similar to the proof of $P_1$ being cofibrant, we construct cofibrant maps 
	\begin{align*}
		h_k\colon\thol\Delta^1&\to\tilde P_k\text{,}\\
		\{\{0\}\}&\mapsto x_{k_a}\text{,}\\
		\{\{1\}\}&\mapsto x_{k_b}
	\end{align*}
	as retracts of an inclusion $\thol m\colon\thol\partial\Delta^1\to\thol\Delta^2$, and then obtain $P_3$ via pushouts
	\begin{equation*}
		\begin{tikzcd}
			\ar[d, "h_k"]\thol\partial\Delta^1\ar[r, "f"]&\tel\ar[d, "\alpha"]\\
			\tilde P_k\ar[r,"\beta"]&P_3
		\end{tikzcd}\text{.}
	\end{equation*}
	Let
	\begin{align*}
		m\colon\partial\Delta^1&\to\Delta^2\text{,}\\
		0&\mapsto 0\text{,}\\
		1&\mapsto 2\text{.}
	\end{align*}
	We obtain $h_1$ as a retract of $\thol m$ as indicated in the diagram below
	\begin{equation*}
		\ptoinsd\text{,}
	\end{equation*}
	and $h_2$ as a retract of $\thol m$ as follows:
	\begin{equation*}
		\pttinsdf\text{.}
	\end{equation*}
	Note that we skipped the step where we apply the folding from the proof of Theorem~\ref{lem:poiscof}. Since $m$ is a monomorphism in $\sSet$, $\thol m$ is a cofibration and thus, so is $h_k$ for $k=1,2$.
\end{proof}

\begin{mylem}
	The poset $P_4$ is cofibrant, and every inclusion of a minimum into $P_4$ is a cofibration.
\end{mylem}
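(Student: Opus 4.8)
The two minima of $P_4$ are $x$ and $y_2$. The four elements $y_1,y_2,z_1,z_2$ form the four--element poset in which $y_1$ and $y_2$ both lie below $z_1$ and $z_2$, while $x$ lies below $y_1$ only. The plan is to read off cofibrancy of $P_4$ together with the cofibration property of $i_x$ from a single pushout, and to treat $i_{y_2}$ separately by the doubling--and--folding method of Lemma~\ref{lem:poiscof}; the inclusion $i_{y_2}$ is the only reason $P_4$ is not handled by one gluing as in the nine standard cases.

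First I would present $P_4$ as the result of attaching a new minimum below $y_1$. Let $Q$ be the poset on $\{y_1,y_2,z_1,z_2\}$ with $y_1,y_2$ below $z_1,z_2$, which is cofibrant and whose minimum inclusions are cofibrations by Theorem~\ref{thm:posf}, and let $\cat{D}=x\to y$. Consider the pushout
\begin{equation*}
\begin{tikzcd}
\tel \ar[r, "f"] \ar[d, "h"] & \cat{D} \ar[d, "\alpha"]\\
Q \ar[r, "\beta"] & P_4
\end{tikzcd}
\end{equation*}
with $f(0)=y$ and $h(0)=y_1$. Since $h$ is the inclusion of the minimum $y_1$ of $Q$ it is a cofibration, hence so is its cobase change $\alpha$; as $\cat{D}$ is cofibrant, the composite $\iel\to\cat{D}\xrightarrow{\alpha}P_4$ is a cofibration and $P_4$ is cofibrant. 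Writing $\iota_x\colon\tel\to\cat{D}$ for the inclusion of the minimum of $\cat{D}$, which is a cofibration by Corollary~\ref{cor:slinccof}, the composite $\alpha\circ\iota_x$ is exactly the inclusion $i_x\colon\tel\to P_4$, $0\mapsto x$, so $i_x$ is a cofibration.

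This does not cover $i_{y_2}$, since $\beta$ is the cobase change of $f$, the inclusion of the \emph{maximum} of $\cat{D}$, which need not be a cofibration. For $i_{y_2}$ I would split $y_2$ into two copies and form the auxiliary poset $\tilde P$ on $\{x,y_1,y_{2_a},y_{2_b},z_1,z_2\}$ with $x\le y_1\le z_1,z_2$, $y_{2_a}\le z_1$ and $y_{2_b}\le z_2$, so that identifying $y_{2_a}$ with $y_{2_b}$ recovers $P_4$. By Lemma~\ref{lem:retpush} it then suffices to embed $\tilde P$ into $\thol\Delta^2$ and fold it back onto itself in such a way that the two--point map $h\colon\thol\partial\Delta^1\to\tilde P$ sending the endpoints to $y_{2_a}$ and $y_{2_b}$ is realised as a retract of $\thol m$, where $m\colon\partial\Delta^1\to\Delta^2$ is the monomorphism $0\mapsto 0$, $1\mapsto 2$. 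Collapsing the two endpoints of $\thol\partial\Delta^1$ to a point turns the pushout of Lemma~\ref{lem:retpush} into the identification $y_{2_a}=y_{2_b}$, so the lemma yields simultaneously that $P_4$ is cofibrant and that the inclusion of the collapsed point, namely $i_{y_2}$, is a cofibration.

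The main obstacle is this geometric step, exactly as in the proofs of $P_1$ and $P_3$: producing an explicit order embedding $\tilde P\hookrightarrow\thol\Delta^2$ that admits a retraction carrying $y_{2_a}$ and $y_{2_b}$ to the images of $0$ and $2$ under $\thol m$. I expect it to be a variant of the embedding used for $P_1$, with the maximal vertices $z_1,z_2$ playing the role of $y_1,y_2$ there and the extra chain $x\le y_1$ accommodated in the remaining barycentric strata. Once the picture is fixed, the remaining checks --- that the fold is order preserving and restricts to the identity on $\tilde P$, and that $h$ is the induced retract of $\thol m$ --- are routine, and all formal conclusions follow from stability of cofibrations under cobase change and composition together with Lemmas~\ref{lem:imapycof} and~\ref{lem:retpush}.
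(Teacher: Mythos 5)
Your first half coincides with the paper's own proof: the paper also realises $P_4$ as the pushout of $\cat{D}=x\to y$ along the minimum inclusion $\iota_{y_1}\colon\tel\to Q$ of the four--element diamond $Q$, concludes that $\alpha$ is a cofibration by cobase change (using Theorem~\ref{thm:posf}), and obtains $i_x=\alpha\circ\iota_x$ as a composite of cofibrations; this part of your proposal is complete and correct, including your observation that $\beta$ is a cobase change of a \emph{maximum} inclusion and therefore gives no information about $i_{y_2}$. Your second half also follows the paper's strategy to the letter: the paper introduces exactly the same six--element poset $\tilde P$ (splitting $y_2$ into $y_{2_a}\le z_1$ and $y_{2_b}\le z_2$), exhibits the two--point map $h\colon\thol\partial\Delta^1\to\tilde P$ hitting $y_{2_a},y_{2_b}$ as a retract of $\thol m$ for a vertex inclusion $m\colon\partial\Delta^1\to\Delta^2$, and applies Lemma~\ref{lem:retpush}.

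However, in that second half you stop precisely where the mathematical content lies: you only assert that you ``expect'' a suitable embedding $\tilde P\hookrightarrow\thol\Delta^2$ with a folding retraction to exist. Until such a retract presentation of $h$ is actually exhibited, nothing has been proved about $i_{y_2}$ --- Lemma~\ref{lem:retpush} has no content without it, and this construction is the only non--formal step of the whole argument. That is a genuine gap. It can be closed exactly as you anticipate (this is what the paper's figure encodes). Embed $\tilde P$ as the full subposet of $\thol\Delta^2=\tau_1\sd^2\Delta^2$ on the chains
\begin{align*}
	y_{2_a}&\mapsto\{\{0\}\}\text{,} & y_{2_b}&\mapsto\{\{1\}\}\text{,} & x&\mapsto\{\{0,1,2\}\}\text{,}\\
	y_1&\mapsto\{\{0,1\},\{0,1,2\}\}\text{,} & z_1&\mapsto\{\{0\},\{0,1\},\{0,1,2\}\}\text{,} & z_2&\mapsto\{\{1\},\{0,1\},\{0,1,2\}\}\text{;}
\end{align*}
one checks these six chains form a poset isomorphic to $\tilde P$, with no extra relations. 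Define $p\colon\thol\Delta^2\to\tilde P$ by first applying the involution of $\tau_1\sd^2\Delta^2$ induced by the simplicial automorphism of $\Delta^2$ exchanging the vertices $1$ and $2$ (the ``fold''), and then sending a chain $c$ of the folded half to: $y_{2_a}$ if $c=\{\{0\}\}$; $y_{2_b}$ if $c=\{\{1\}\}$; $z_1$ if $\{0\}\in c$ and $c\neq\{\{0\}\}$; $z_2$ if $\{1\}\in c$ and $c\neq\{\{1\}\}$; $y_1$ if $\{0,1\}\in c$ but $\{0\},\{1\}\notin c$; and $x$ otherwise. A direct check shows $p$ is order preserving and $p\circ i=\id_{\tilde P}$, so $h$ is a retract of $\thol m$ with $m\colon 0\mapsto 0$, $1\mapsto 1$ (your choice $1\mapsto 2$ works equally well by the symmetry of $\Delta^2$, after composing the embedding with that symmetry). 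With this in place, your application of Lemma~\ref{lem:retpush} identifies $y_{2_a}$ with $y_{2_b}$, yields $P_4$ as the pushout, and shows both that $P_4$ is cofibrant and that $i_{y_2}$ is a cofibration, completing the proof.
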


\begin{proof}
	Again, we have to construct $P_4$ twice, once to show that the inclusion 
	\begin{align*}
		i_x\colon\tel&\to P_4\text{,}\\
		0&\mapsto x
	\end{align*}
	is a cofibration, and once to show that 
	\begin{align*}
		i_{y_2}\colon\tel&\to P_4\text{,}\\
		0&\mapsto y_2
	\end{align*}
	is a cofibration.

	We will start with $i_x$. Let $\cat{D} = x\to y$, and $Q$ be the poset given by
	\begin{equation*}
		Q = \begin{tikzcd}
		    	&z_2\\
			y_1\ar[ur]\ar[dr]&&\ar[ul]\ar[dl]y_2\\
			&z_1
		    \end{tikzcd}\text{.}
	\end{equation*}
	We obtain $P_4$ via the pushout
	\begin{equation*}
		\begin{tikzcd}
			\ar[d, "\iota_{y_1}"]\tel\ar[r, "\iota_y"]&\cat{D}\ar[d,"\alpha"]\\
			Q\ar[r, "\beta"]& P_4
		\end{tikzcd}\text{,}
	\end{equation*}
	where $\iota_y(0)=y$ and $\iota_{y_1}(0)=y_1$. Since $\iota_{y_1}$ is a cofibration by Theorem~\ref{thm:posf}, so is $\alpha$ and hence in particular the composition $i_x = \alpha\circ\iota_x$ where $\iota_x\colon\tel\to\cat{D}$ is given by $\iota_x(0)=x$.

	To show that $i_{y_2}$ is a cofibration, let $\tilde P$ be the poset 
	\begin{equation*}
		\tilde P = \begin{tikzcd}
			z_1&&z_2\\
			y_{2_a}\ar[u]&y_1\ar[ul]\ar[ur]&y_{2_b}\ar[u]\\
			&x\ar[u]
		\end{tikzcd}\text{.}
	\end{equation*}
	We will use the same procedure as before, \ie construct the two--point embedding 
	\begin{align*}
		h\colon\thol\Delta^1&\to \tilde P\text{,}\\
		\{\{0\}\}&\mapsto y_{2_a}\text{,}\\
		\{\{1\}\}&\mapsto y_{2_b}
	\end{align*}
	as a retract of a two--point embedding into the folded $\thol\Delta^2$ as indicated by the following diagram:
	\begin{equation*}
		\pfinsdf\text{.}
	\end{equation*}
	Then apply Lemma~\ref{lem:retpush} to glue $\tilde P$ together at $y_{2_a}$ and $y_{2_b}$.
\end{proof}

\begin{mylem}
	The poset $P_5$ is cofibrant, and every inclusion of a minimum into $P_5$ is a cofibration.
\end{mylem}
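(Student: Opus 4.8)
The plan is to exploit that $P_5$ is the union of two semilattices meeting only in the middle element $y$: below $y$ sits the join--semilattice $L$ on $\{x_1,x_2,y\}$ with $x_1,x_2<y$, and above $y$ sits the meet--semilattice $U$ on $\{y,z_1,z_2\}$ with $y<z_1,z_2$. Since every non--identity morphism of $P_5$ factors through $y$, gluing $L$ and $U$ along $y$ reconstructs $P_5$ on the nose; unlike the earlier posets $P_1$ through $P_4$, no folding or retract of $\thol\Delta^n$ is required here.

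Concretely, I would first note that $L$ and $U$ are cofibrant by Theorem~\ref{thm:sliscof}. Next, since $y$ is the minimum of the semilattice $U$, the inclusion $\iota_y\colon\tel\to U$ with $0\mapsto y$ is a cofibration by Corollary~\ref{cor:slinccof}. I would then realize $P_5$ as the pushout
\begin{equation*}
	\begin{tikzcd}
		\ar[d, "h"']\tel\ar[r, "\iota_y"]&\ar[d, "\alpha"]U\\
		L\ar[r, "\beta"]&P_5
	\end{tikzcd}\text{,}
\end{equation*}
where $h(0)=y$ is the inclusion of the maximum of $L$. As $\iota_y$ is a cofibration, so is its pushout $\beta$, and since $L$ is cofibrant the composite $\iel\to L\xrightarrow{\beta}P_5$ is a cofibration; hence $P_5$ is cofibrant.

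For the minimum inclusions, I would observe that the minima of $P_5$ are precisely $x_1$ and $x_2$, both lying in $L$. As local minima of the semilattice $L$ their inclusions $\tel\to L$ are cofibrations by Corollary~\ref{cor:slinccof}, so composing with $\beta$ shows that each minimum inclusion $\tel\to L\xrightarrow{\beta}P_5$ is a cofibration; the symmetry exchanging $x_1$ and $x_2$ means a single computation suffices. The only genuine care is needed in the decomposition itself, which I regard as the main (though modest) obstacle: one must push out along the correct leg, namely the minimum inclusion $\iota_y$ into $U$---note that $y$ is the maximum of $L$ and so is \emph{not} covered by Corollary~\ref{cor:slinccof} on that side---and check that this pushout introduces no extra morphisms and returns exactly $P_5$.
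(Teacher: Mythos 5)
Your proof is correct and is essentially the paper's own argument: the paper likewise splits $P_5$ into the lower join--semilattice ($Q_2$, your $L$) and the upper meet--semilattice ($Q_1$, your $U$), glues them at $y$ by pushing out along the minimum inclusion $\tel\to Q_1$ (a cofibration by Corollary~\ref{cor:slinccof}), and then composes with the minimum inclusions $\tel\to Q_2$ exactly as you do. The only cosmetic difference is that the paper concludes cofibrancy of $P_5$ via Lemma~\ref{lem:imapycof} from the cofibration $\tel\to P_5$, whereas you compose $\iel\to L\to P_5$ directly; both are valid.
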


\begin{proof}
	Let 
	\begin{align*}
		Q_1 = \begin{tikzcd}[ampersand replacement=\&]
		      	z_1\&\&z_2\\
			\&\ar[ur]y_a\ar[ul]
		      \end{tikzcd}
			&&\text{and}&&
		Q_2 = \begin{tikzcd}[ampersand replacement=\&]
		      	\&y_b\\
			x_1\ar[ur]\&\&x_2\ar[ul]
		      \end{tikzcd}
	\end{align*}
	and
	\begin{align*}
		\iota_{x_k}\colon\tel&\to Q_2\text{,}\\
		0&\mapsto x_k
	\end{align*}
	for $k=1,2$. Then we can construct $P_5$ via the pushout
	\begin{equation*}
		\begin{tikzcd}
			\tel\ar[r, "\iota_{y_b}"]\ar[d, "\iota_{y_a}"]& Q_2\ar[d, "\alpha"]\\
			Q_1\ar[r]& P_5
		\end{tikzcd}\text{,}
	\end{equation*}
	where $\iota_{y_a}$ and $\iota_{y_b}$ are given by $\iota_{y_a}(0)=y_a$ and $\iota_{y_b}(0)=y_b$. Since $\iota_{y_a}$ is a cofibration by Corollary~\ref{cor:slinccof}, so is $\alpha$ and thus in particular the compositions $\alpha\circ i_{x_k}$ for $k=1,2$. By Lemma~\ref{lem:imapycof}, $P_5$ is cofibrant and the minima inclusions are cofibrations.
\end{proof}

\begin{mylem}
	The poset $P_6$ is cofibrant, and every inclusion of a minimum into $P_6$ is a cofibration
\end{mylem}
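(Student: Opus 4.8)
The plan is to proceed exactly as in the proof of Lemma~\ref{lem:poiscof} for $P_1$: split one of the two minima of $P_6$, embed the resulting poset into $\thol\Delta^2$, realise a two--point inclusion as a retract of the image of a monomorphism under $\thol$, and then glue the two copies back together by a pushout. The poset $P_6$ is the \emph{complete bipartite} poset with minima $x_1,x_2$ and maxima $y_1,y_2,y_3$, in which $x_i\le y_j$ for all $i,j$; being neither a join-- nor a meet--semilattice, it is not covered by Theorem~\ref{thm:sliscof}. It does, however, admit the automorphism exchanging $x_1$ and $x_2$, which carries the inclusion $i_{x_1}\colon\tel\to P_6$ to $i_{x_2}$. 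Hence it suffices to realise $i_{x_1}$ as a cofibration; cofibrancy of $P_6$ then follows from Lemma~\ref{lem:imapycof}, and the inclusion $i_{x_2}$ by symmetry.

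First I would split the minimum $x_1$ into two elements $x_{1_a}$ and $x_{1_b}$, distributing the relations $x_1\le y_1,y_2,y_3$ among them---say $x_{1_a}\le y_1,y_2$ and $x_{1_b}\le y_3$---so that the two copies together still lie below all three maxima, while the relations $x_2\le y_1,y_2,y_3$ of the untouched minimum are retained. Call the resulting six--element poset $\tilde P$; identifying $x_{1_a}$ with $x_{1_b}$ returns $P_6$ on the nose. I would then take the monomorphism
\begin{align*}
	m\colon\partial\Delta^1&\to\Delta^2\text{,}\\
	0&\mapsto 0\text{,}\\
	1&\mapsto 2
\end{align*}
in $\sSet$ together with the two--point inclusion
\begin{align*}
	h\colon\thol\partial\Delta^1&\to\tilde P\text{,}\\
	\{\{0\}\}&\mapsto x_{1_a}\text{,}\\
	\{\{1\}\}&\mapsto x_{1_b}\text{.}
\end{align*}
Since $m$ is a monomorphism, $\thol m$ is a cofibration.

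The heart of the argument is to exhibit $h$ as a retract of $\thol m$: I would give an explicit order embedding of $\tilde P$ into $\thol\Delta^2$ sending $x_{1_a}$ and $x_{1_b}$ to the vertices $\{0\}$ and $\{2\}$, and an order--preserving collapse of $\thol\Delta^2$ back onto $\tilde P$ retracting it. This is precisely the hypothesis of Lemma~\ref{lem:retpush} with $A=\{x_{1_a},x_{1_b}\}$ and the injection $A\to[2]$, $x_{1_a}\mapsto 0$, $x_{1_b}\mapsto 2$. Applying that lemma to the pushout
\begin{equation*}
	\begin{tikzcd}
		\ar[d,"h"]\thol\partial\Delta^1\ar[r]&\tel\ar[d,"\alpha"]\\
		\tilde P\ar[r]&P_6
	\end{tikzcd}\text{,}
\end{equation*}
whose top map collapses the two boundary points, identifies $x_{1_a}$ with $x_{1_b}$ and recovers $P_6$, with $\alpha=i_{x_1}$. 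Since $h$ is a cofibration, so is $\alpha$, whence $P_6$ is cofibrant by Lemma~\ref{lem:imapycof}; by the symmetry above $i_{x_2}$ is a cofibration as well.

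The main obstacle is the combinatorial construction inside $\thol\Delta^2$: one must position the six elements of $\tilde P$ at vertices of the double subdivision so that the stated cover relations hold and so that $x_{1_a},x_{1_b}$ fall on the two vertices singled out by $\thol m$, and then verify that the accompanying collapse really is order preserving and restricts to the identity on $\tilde P$. As in Lemma~\ref{lem:poiscof}, folding the embedding should cut down the number of cells one has to inspect; once this single retract is checked, the pushout computation and the symmetry reduction are immediate.
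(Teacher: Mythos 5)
Your scaffolding mirrors the paper's strategy exactly---split a minimum of $P_6$, realise the inclusion of the split copies as a retract of the image of a monomorphism under $\thol$, glue back with a pushout via Lemma~\ref{lem:retpush}, and finish by Lemma~\ref{lem:imapycof} and symmetry---but the step you defer as ``the main obstacle'' is not merely unfinished: for your choice of $\tilde P$ it is impossible. With the splitting $x_{1_a}\le y_1,y_2$, $x_{1_b}\le y_3$, $x_2\le y_1,y_2,y_3$, the poset $\tilde P$ has height one, so its nerve is a graph: six vertices, six edges, connected, containing the cycle $x_{1_a}<y_1>x_2<y_2>x_{1_a}$. Hence $|N\tilde P|$ is homotopy equivalent to a circle. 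On the other hand, exhibiting $h$ as a retract of $\thol m$ in particular exhibits $\tilde P$ as a retract of $\thol\Delta^2$ in $\Pos$; applying the nerve and geometric realisation, $|N\tilde P|$ would be a retract of $|N\thol\Delta^2|\cong|\sd^2\Delta^2|\cong|\Delta^2|$, and a retract of a contractible space is contractible---a contradiction. The same obstruction rules out \emph{every} two-fold splitting: however you distribute $y_1,y_2,y_3$ over $x_{1_a},x_{1_b}$, one copy lies below at least two of the maxima, and since $x_2$ lies below all three, a four-cycle survives. (Note this is also why your appeal to Lemma~\ref{lem:poiscof} as a template is misleading: the analogous $\tilde P$ there is a tree, because in $P_1$ no two minima share more than one upper bound, whereas in $P_6$ the two minima share all three.)

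This is precisely why the paper splits the minimum into \emph{three} copies rather than two: its intermediate poset $Q$ has one copy of the split minimum below each single maximum, together with the untouched minimum below all three, so the nerve of $Q$ is a tree and the contractibility obstruction vanishes. The paper then realises the three-point inclusion $h\colon\coprod_{i=1}^3\tel\to Q$ (hitting the three copies) as a retract of $\thol$ applied to the vertex inclusion $N(\coprod_{i=1}^3\tel)\to\Delta^2$, i.e.\ of the inclusion of the three corner vertices $\{\{0\}\},\{\{1\}\},\{\{2\}\}$ of $\thol\Delta^2$, exhibiting the embedding and the order-preserving collapse explicitly. Lemma~\ref{lem:retpush} is stated for an arbitrary subset $A$ exactly so that it covers this three-point gluing: the pushout along $\coprod_{i=1}^3\tel\to\tel$ collapses the three copies to a single minimum, recovers $P_6$, and shows the inclusion of that minimum is a cofibration; cofibrancy and the other minimum then follow from Lemma~\ref{lem:imapycof} and the symmetry you correctly identified. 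So your argument can be repaired, but only by abandoning the two-point splitting and the map $m\colon\partial\Delta^1\to\Delta^2$ in favour of this three-point version.
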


\begin{proof}
	Let $Q$ be the poset
	\begin{equation*}
		Q = \begin{tikzcd}
		    	&&y_{1_b}\ar[d]\\
			&&x_2\\
			&&y_2\ar[u]\ar[dl]\ar[dr]\\
			&x_1&&x_3\\
			y_{1_a}\ar[ur]&&&&\ar[ul]y_{1_c}
	    \end{tikzcd}\text{,}
	\end{equation*}
	and let $h\colon\coprod_{i=1}^3\tel\to Q$ be the inclusion with image $\{y_{1_a},y_{1_b},y_{1_c}\}$. We obtain $h$ as a retract of
	\begin{align*}
		m\colon N(\coprod_{i=1}^3)\tel&\to\thol\Delta^2\text{,}\\
			0_k&\mapsto \{\{k\}\}
	\end{align*}
	as indicated by the following diagram:
	\begin{equation*}
		\psqinsd
	\end{equation*}
	We can construct $P_6$ via the pushout
	\begin{equation*}
		\begin{tikzcd}
			\ar[d,"h"]\coprod_{i=1}^3\tel\ar[r]&\tel\ar[d,"\alpha"]\\
			Q\ar[r]&P_6
		\end{tikzcd}\text{.}
	\end{equation*}
	Hence by Lemma~\ref{lem:retpush}, $P_6$ is cofibrant and the inclusion of $y_1$ into $P_6$ is a cofibration and by symmetry, so is the inclusion of $y_2$.
\end{proof}

\begin{mythm}
	The poset $P_7$ is cofibrant, and every inclusion of a minimum into $P_7$ is a cofibration.
\end{mythm}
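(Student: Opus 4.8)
The plan is to obtain $P_7$ by collapsing two ``split'' copies of a single minimum in an auxiliary six--element poset, so that Lemma~\ref{lem:retpush} delivers both the cofibrancy of $P_7$ and the cofibration property of the minimum inclusion in one stroke. Since the only minima of $P_7$ are $x_1$ and $x_2$, and $P_7$ is symmetric under the transposition $x_1\leftrightarrow x_2$, it is enough to treat the inclusion $i_{x_2}\colon\tel\to P_7$ with $0\mapsto x_2$; the inclusion of $x_1$ then follows by symmetry.

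Concretely, I would split $x_2$ into two points $x_{2_a}\le y_1$ and $x_{2_b}\le y_2$, leaving the remaining relations intact, thereby forming
\begin{equation*}
	\tilde P = \begin{tikzcd}[ampersand replacement=\&]
		\& z\\
		y_1\ar[ur]\&\&y_2\ar[ul]\\
		x_{2_a}\ar[u]\&x_1\ar[ul]\ar[ur]\&x_{2_b}\ar[u]
	\end{tikzcd}\text{.}
\end{equation*}
Identifying $x_{2_a}$ with $x_{2_b}$ produces a single minimum lying below both $y_1$ and $y_2$ while changing no other order relation, so the pushout collapsing these two points is exactly $P_7$, with the inclusion of the collapsed point equal to $i_{x_2}$. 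To invoke Lemma~\ref{lem:retpush} with $Q=\tilde P$ and $A=\{x_{2_a},x_{2_b}\}$, I would take the injection $i\colon A\to[2]$ with $x_{2_a}\mapsto 0$ and $x_{2_b}\mapsto 2$, together with the induced map $m\colon\partial\Delta^1\to\Delta^2$; the required inclusion $h\colon\thol\partial\Delta^1\to\tilde P$ then picks out $x_{2_a}$ and $x_{2_b}$.

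The one nontrivial point, and the main obstacle, is to realise $h$ as a retract of $\thol m$, i.e.\ to exhibit an order--embedding $s\colon\tilde P\hookrightarrow\thol\Delta^2$ sending $x_{2_a}$ and $x_{2_b}$ to the corner vertices $\{\{0\}\}$ and $\{\{2\}\}$, together with a poset retraction $r\colon\thol\Delta^2\to\tilde P$ satisfying $rs=\id$. Following the proof of Lemma~\ref{lem:poiscof}, I would place the six elements of $\tilde P$ on suitable cells of the doubly subdivided triangle $\thol\Delta^2$---with $x_{2_a}$ and $x_{2_b}$ on the two corner vertices, $z$ on a cell dominating the cells chosen for $y_1$ and $y_2$, and $x_1$ on a cell lying below both---and then define $r$ by folding $\thol\Delta^2$ onto this embedded copy. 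Checking that the fold is order--preserving and fixes the embedded copy pointwise is the delicate, picture--driven step. Once it is in place, $\thol m$ is a cofibration because $m$ is a monomorphism, hence $h$ is a cofibration as a retract of $\thol m$, and Lemma~\ref{lem:retpush} yields that $P_7$ is cofibrant and that $i_{x_2}$ is a cofibration. Appealing to the $x_1\leftrightarrow x_2$ symmetry then finishes the proof.
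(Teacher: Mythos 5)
Your symmetry reduction and your identification of $P_7$ as the pushout of $\tilde P$ collapsing $x_{2_a}$ and $x_{2_b}$ are both fine, but the step you defer as ``delicate, picture--driven'' is not delicate --- it is impossible, and this is exactly where the proposal breaks. In $\thol\Delta^2$, the poset of non-empty chains of non-empty subsets of $\{0,1,2\}$ ordered by inclusion, two distinct corner vertices such as $\{\{0\}\}$ and $\{\{2\}\}$ have \emph{no common upper bound}: any element above both would be a chain containing the incomparable subsets $\{0\}$ and $\{2\}$. Your $\tilde P$, however, retains the global maximum $z$ of $P_7$, with $x_{2_a}\le y_1\le z$ and $x_{2_b}\le y_2\le z$. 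If $h$ were a retract of $\thol m$, the left square of the retract diagram would force $s(x_{2_a})$ and $s(x_{2_b})$ to be the two corners $\{\{0\}\}$ and $\{\{2\}\}$, and monotonicity of $s$ would then make $s(z)$ a common upper bound of these corners --- a contradiction. So no monotone map $s\colon\tilde P\to\thol\Delta^2$ with the required corner values exists (order--embedding or not), $h$ is not a retract of $\thol m$, and the hypothesis of Lemma~\ref{lem:retpush} cannot be satisfied for $A=\{x_{2_a},x_{2_b}\}$. The failure is independent of your choices: for any $n$ and any injection $A\to[n]$, Lemma~\ref{lem:retpush} sends the points of $A$ to distinct corner vertices $\{\{i(a)\}\}$ of $\thol\Delta^n$, and distinct corners never admit a common upper bound. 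This obstruction is precisely what separates $P_7$ from the posets where the splitting trick does work ($P_1$, $P_3$, $P_4$, $P_9$): there, the two copies of the doubled point have no common upper bound in the split poset, whereas splitting a \emph{minimum} of $P_7$ cannot remove the top element $z$.

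The paper's proof therefore takes a genuinely different route, which indicates how a repair must look. It realizes $i_{x_1}$ as a retract of $\iota_{a_1}\colon\tel\to Q$, where $Q$ is a seven-element poset with two minima $a_1,a_2$, two middle elements $b_1,b_2$ and \emph{three} maxima $c_1,c_2,c_3$ (with $b_1\le c_1,c_2$ and $b_2\le c_2,c_3$); the retraction $p\colon Q\to P_7$ sends $a_k\mapsto x_k$, $b_k\mapsto y_k$, $c_1\mapsto y_1$, $c_3\mapsto y_2$ and $c_2\mapsto z$. The inclusion $\iota_{a_1}$ is shown to be a cofibration not by placing two separated points at corners, but by collapsing the whole (twice-folded) boundary $\thol\partial\Delta^2\to\thol\Delta^2$ to a single point in a pushout: collapsing the boundary is what legitimately produces a bottom element lying below several pairwise incomparable maxima, something no embedding of corner points can simulate. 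Your six-element $\tilde P$ with its single top element cannot serve as the auxiliary poset; any fix along your lines must replace it by a poset of the paper's kind, in which the element destined to become $z$ has been split as well.
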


\begin{proof}
	Let $Q$ be the poset
	\begin{equation*}
		Q = \begin{tikzcd}
		    	&c_1\\
			&b_1\ar[u]\ar[d]\\
			a_1\ar[uur]\ar[ur]\ar[r]\ar[dr]\ar[ddr]&c_2&\ar[uul]\ar[ul]\ar[l]\ar[dl]\ar[ddl]a_2\\
			&b_2\ar[u]\ar[d]\\
			&c_3
		    \end{tikzcd}\text{.}
	\end{equation*}
	We will obtain a cofibrant embedding 
	\begin{align*}	
		i_{x_1}\colon\tel&\to P_7\text{,}\\
			0&\mapsto x_1
	\end{align*}
	as a retract of the inclusion
	\begin{align*}
		\iota_{a_1}\colon\tel&\to Q\text{,}\\
		0&\mapsto a_1\text{,}
	\end{align*}
	and $\iota_{a_1}$ as a pushout of a retract of the boundary inclusion $\thol\partial\Delta^2\to\thol\Delta^2$. We will start with the embedding of the folded boundary into the folded $\thol\Delta^2$:
	\begin{equation*}
		\pictsdthf
	\end{equation*}
	Folding again at the axis between $\{\{0,1\}\}$ and $\{\{0,1,2\}\}$ as indicated in the diagram above, we obtain
	\begin{equation*}
		\pictsdttt\text{.}
	\end{equation*}
	We will denote the resulting inclusion by $m\colon B\to S$. We obtain the poset $Q$ by taking the pushout
	\begin{equation*}
		\begin{tikzcd}
			B\ar[r]\ar[d,"m"]&\ar[d, "\iota_{a_1}"]\tel\\
			S\ar[r]& Q
		\end{tikzcd}\text{,}		
	\end{equation*}
	and since $m$ is a cofibration, so is $\iota_{a_1}$. Let 
	\begin{align*}
		i\colon P_7&\to Q\text{,}\\
		x_k&\mapsto a_k\text{,}\\
		y_k&\mapsto b_k\text{,}\\
		z&\mapsto c_2
	\intertext{and}
		p\colon Q&\to P_7\text{,}\\
		a_k&\mapsto x_k\text{,}	&	c_1&\mapsto y_1\text{,}\\
		b_k&\mapsto y_k\text{,}	&	c_2&\mapsto z\text{,}\\
		&			&	c_3&\mapsto y_2\text{.}
	\end{align*}
	Then $p\circ i=\id$ and the diagram 
 	\begin{equation*}
 	\begin{tikzcd}
 		\ar[d,"i_{x_1}"]\tel\ar[r]&\ar[d, "\iota_{a_1}"]\tel\ar[r]&\tel\ar[d,"i_{x_1}"]\\
 		P_7\ar[r, "i"]& Q\ar[r,"p"]&P_7
 	\end{tikzcd}
 	\end{equation*}
	commutes. Thus, $i_{x_1}$ is a cofibration and by symmetry, so is $i_{x_2}\colon\tel\to P_7$, given by $i_{x_2}(0)=x_2$. Applying Lemma~\ref{lem:imapycof} yields that $P_7$ is cofibrant.
\end{proof}

\begin{mylem}\label{lem:peiscof}
	The poset $P_8$ is cofibrant, and the inclusion of the minimum into $P_8$ is a cofibration.
\end{mylem}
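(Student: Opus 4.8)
The plan is to obtain $P_8$ from a single application of Lemma~\ref{lem:retpush}, by writing it as the quotient of an ``unfolded'' poset in which the minimum has been split into two copies. Since $x$ is the unique minimum of $P_8$, it is enough by Lemma~\ref{lem:imapycof} to prove that the inclusion $i_x\colon\tel\to P_8$, $0\mapsto x$, is a cofibration, and the map $\alpha$ produced by Lemma~\ref{lem:retpush} will be exactly this inclusion. First I would split $x$ into two copies $x_a,x_b$ and consider
\begin{equation*}
	Q = \begin{tikzcd}[ampersand replacement=\&]
		z_1 \&\& z_2\\
		y_1\ar[u]\ar[urr] \&\& y_2\ar[ull]\ar[u]\\
		x_a\ar[u] \&\& x_b\ar[u]
	\end{tikzcd}\text{,}
\end{equation*}
in which $x_a\le y_1$ and $x_b\le y_2$, while the top four elements still carry the complete bipartite order of $P_8$. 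Identifying $x_a$ with $x_b$ to a single element $x$ creates exactly the relations $x\le y_1$ and $x\le y_2$ and no others, so the pushout collapsing $\{x_a,x_b\}$ to a point returns $P_8$.

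Next I would realise the inclusion $h\colon\tel\amalg\tel\to Q$ onto $\{x_a,x_b\}$ as a retract of $\thol m$, where
\begin{align*}
	m\colon N(\tel\amalg\tel)&\to\Delta^2\text{,}\\
	0_a&\mapsto 0\text{,}\\
	0_b&\mapsto 2
\end{align*}
is the monomorphism sending the two points to two corners of $\Delta^2$. Concretely, I would embed $Q$ into $\thol\Delta^2$ with $x_a$ and $x_b$ seated at the vertices $\{\{0\}\}$ and $\{\{2\}\}$, and then define a retraction $\thol\Delta^2\to Q$ by folding the twice--subdivided triangle onto this copy of $Q$, exactly in the manner used for $\tilde P$ in the proof that $P_4$ is cofibrant and with the folding of Lemma~\ref{lem:poiscof}. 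Because $m$ is a monomorphism, $\thol m$ is a cofibration, and hence so is its retract $h$.

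This embedding-and-folding step is the one I expect to be the main obstacle. One has to distribute the six elements of $Q$ among the vertices of $\sd^2\Delta^2$ so that every covering relation of $Q$ is carried to a relation of $\thol\Delta^2$, and then check that the chosen fold is a well-defined order preserving map splitting the embedding on the nose; everything else is formal. The payoff of going through $Q$ is that no inclusion of two \emph{distinct} minima ever has to be shown to be a cofibration: Lemma~\ref{lem:retpush} only requires the two copies of $x$ to land at distinct simplex vertices and then glues them back to a single point.

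Finally I would feed $h$ into Lemma~\ref{lem:retpush}. The pushout
\begin{equation*}
	\begin{tikzcd}
		\ar[d,"h"]\tel\amalg\tel\ar[r]&\ar[d,"\alpha"]\tel\\
		Q\ar[r]&P_8
	\end{tikzcd}
\end{equation*}
identifies $x_a$ with $x_b$, so its value is $P_8$, and the lemma yields at once that $P_8$ is cofibrant and that $\alpha=i_x$, the inclusion of the unique minimum, is a cofibration. This establishes both assertions.
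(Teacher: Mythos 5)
The step you yourself flag as ``the main obstacle''---realising $h\colon\tel\amalg\tel\to Q$, the inclusion of $\{x_a,x_b\}$, as a retract of $\thol m$---is not merely hard: it is impossible, because this $h$ is not a cofibration in any of the three Thomason structures, and a retract of the cofibration $\thol m$ would have to be one. To see the obstruction, compute homotopy types. In your $Q$ the subposet $\{y_1,y_2,z_1,z_2\}$ has a $4$--cycle as its nerve, and $x_a$ (resp.\ $x_b$) merely cones off the contractible arc $z_1\text{--}y_1\text{--}z_2$ (resp.\ $z_1\text{--}y_2\text{--}z_2$), so $|NQ|\simeq S^1$. If $h$ were a cofibration, then $\tel\amalg\tel$, $\tel$ and $Q$ would all be cofibrant and the pushout you form would be a homotopy pushout; its nerve would therefore be weakly equivalent to the homotopy pushout of $\ast\leftarrow S^0\to S^1$, i.e.\ a circle with an arc attached, a wedge of two circles. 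But $NP_8$ is a cone with apex the minimum $x$, hence contractible. The same failure can be seen without any homotopy theory: computed in $\Cat$, your pushout is not even a poset, since collapsing the \emph{discrete} subcategory $\{x_a,x_b\}$ identifies no morphisms, so the two composites $x\to y_1\to z_1$ and $x\to y_2\to z_1$ remain distinct (map the pushout to the free category on the Hasse diagram of $P_8$ to check they stay apart); if $h$ were a cofibration this non--poset would be cofibrant, contradicting \cite[Proposition~5.7]{thomason:ccmc}. This also shows your reading of Lemma~\ref{lem:retpush} is too generous: the lemma's hypothesis (the retract of $\thol m$) is exactly the cofibrancy of the inclusion of the point set $A$, and in every application in the paper the collapsed points have pairwise disjoint strict up--sets, which is what keeps the collapse both poset--valued and homotopically harmless. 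Splitting the minimum of $P_8$ violates this: the two copies $x_a,x_b$ stay joined through both $z_1$ and $z_2$, so re--identifying them must kill a loop, which no pushout along a cofibration can do.

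The paper's proof avoids precisely this trap by splitting a \emph{middle} element rather than the minimum. Its auxiliary poset is $Q'=\{x<y_1<z_1,\ x<y_2<z_1,z_2,\ x<y_3<z_2\}$, where the two elements to be glued, $y_1$ and $y_3$, have disjoint strict up--sets $\{z_1\}$ and $\{z_2\}$; one pushes out along the V--shaped poset $R=(y_1\leftarrow x\to y_2)$, embedded in $Q'$ by $x\mapsto x$, $y_1\mapsto y_1$, $y_2\mapsto y_3$ (this embedding is what gets exhibited as a retract of $\thol m$ for $m\colon\Delta^1\to\Delta^2$), and mapped onto $\cat{D}=x\to y$ on the other side. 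That pushout is $P_8$, it is a genuine homotopy pushout (all nerves involved are contractible), and the minimum inclusion is the cofibration $\tel\xrightarrow{\iota_x}\cat{D}\to P_8$, giving both claims. So your construction cannot be repaired by a cleverer folding; any correct argument has to glue along something like $R$ (or otherwise keep the identified points' up--sets disjoint), not along a discrete pair of copies of the minimum.
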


\begin{proof}
	Let $Q$ and $R$ be the posets given by
	\begin{align*}
		Q = \begin{tikzcd}[ampersand replacement=\&]
				z_1\&\&z_2\\
				y_1\ar[u]\&\ar[ul]y_2\ar[ur]\& y_3\ar[u]\\
				\&\ar[ul]\ar[u]x\ar[ur]
			\end{tikzcd}&&\text{and}&&
		R = \begin{tikzcd}[ampersand replacement=\&]
		    	y_1\&\&y_2\\
			\&\ar[ul]x\ar[ur]
		    \end{tikzcd}
	\end{align*}
	and let
	\begin{align*}
		h\colon R&\to Q\text{,}\\
		y_1&\mapsto y_1\text{,}\\
		y_2&\mapsto y_3\text{,}\\
		x&\mapsto x\text{.}
	\end{align*}
	Consider the embedding $m\colon\Delta^1\to\Delta^2$ that maps $\Delta^1$ to the $1$-simplex between $0$ and $1$. We obtain $h$ as a retract of $\thol m$ as indicated by the following diagram (where we skipped the usual folding and the image of $\thol m$ is located at the bottom of the diagram):
	\begin{equation*}
		\perqinsdf\text{.}
	\end{equation*}
	Now let $\cat{D}=x\to y$, and
	\begin{align*}
		f\colon R&\to\cat{D}\text{,}\\
		y_k&\mapsto y\text{,}\\
		x&\mapsto x\text{.}
	\end{align*}
	We obtain $P_8$ via the pushout
	\begin{equation*}
		\begin{tikzcd}
			\ar[d,"h"]R\ar[r,"f"]&\cat{D}\ar[d,"\alpha"]\\
			Q\ar[r]&P_8
		\end{tikzcd}\text{.}
	\end{equation*}
	Since $m$ is a monomorphism in $\sSet$, $\thol m$ is a cofibration and hence, so is $h$. Thus $\alpha$ is a cofibration, and since the inclusion $\iota_x\colon\tel\to\cat{D}$ given by $\iota_x(0)=x$ is a cofibration, so is the composition $i_x=\alpha\circ\iota_x$, which is the inclusion of the minimum into $P_8$. Hence by Lemma~\ref{lem:imapycof}, $P_8$ is cofibrant.
\end{proof}

\begin{mythm}
	The poset $P_9$ is cofibrant, and every inclusion of a minimum is a cofibration
\end{mythm}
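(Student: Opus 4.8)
The plan is to realise $P_9$ as the four--element diamond with one extra edge glued on top of $y_1$, exactly in the spirit of the construction of $P_4$. First I would let $Q$ be the poset
\begin{equation*}
	Q = \begin{tikzcd}[ampersand replacement=\&]
		y_1 \& y_2\\
		x_1\ar[u]\ar[ur] \& x_2\ar[ul]\ar[u]
	\end{tikzcd}\text{,}
\end{equation*}
i.e. the four--element poset in which both $x_1,x_2$ lie below both $y_1,y_2$; deleting $z$ from $P_9$ leaves precisely $Q$. As a poset with four elements, $Q$ is cofibrant and the inclusions $\iota_{x_1},\iota_{x_2}\colon\tel\to Q$ of its two minima are cofibrations by Theorem~\ref{thm:posf}.

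Next I would put $\cat{D}=x\to y$ and form the pushout
\begin{equation*}
	\begin{tikzcd}
		\tel\ar[r,"\iota_x"]\ar[d,"\iota_{y_1}"'] & \cat{D}\ar[d,"\alpha"]\\
		Q\ar[r,"\beta"] & P_9
	\end{tikzcd}\text{,}
\end{equation*}
where $\iota_x(0)=x$ is the inclusion of the minimum of the chain $\cat{D}$ and $\iota_{y_1}(0)=y_1$. Since $\iota_x$ is a cofibration by Theorem~\ref{thm:cacof}, and cofibrations are stable under pushout, $\beta$ is a cofibration as well. The point I want to stress is that this holds regardless of the attaching map $\iota_{y_1}$, so it is irrelevant that $y_1$ is a \emph{maximal} rather than a minimal element of $Q$. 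Because $\iel\to Q$ is a cofibration, the composite $\iel\to Q\xrightarrow{\beta}P_9$ is a cofibration and hence $P_9$ is cofibrant.

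For the minimum inclusions I would observe that $\beta$ is injective on objects with $\beta(x_k)=x_k$, so the inclusion $i_{x_k}\colon\tel\to P_9$ of the minimum $x_k$ factors as $i_{x_k}=\beta\circ\iota_{x_k}$. As a composite of the cofibration $\iota_{x_k}$ (from Theorem~\ref{thm:posf}) with the cofibration $\beta$, each $i_{x_k}$ is a cofibration; since $x_1$ and $x_2$ are the only minima of $P_9$, this covers every inclusion of a minimum.

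The one thing that genuinely needs checking is that the pushout really produces $P_9$ and not some larger category. Since $\iota_x$ is a cofibration, the pushout agrees in $\Pos$, $\Ac$ and $\Cat$, so it suffices to compute it once as the evident partial order on the five elements of $Q$ together with the adjoined top element $z$: the glued element $z$ (the image of $y\in\cat{D}$) acquires only the relation $y_1\le z$, whence by transitivity $x_1,x_2\le z$, while $z$ and $y_2$ remain incomparable because $y_2$ is incomparable to $y_1$ in $Q$. This bookkeeping, rather than any model--categorical difficulty, is the only place where I expect care to be needed.
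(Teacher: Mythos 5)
Your proof is correct, but it takes a genuinely different route from the paper's. The paper unfolds $P_9$ at $x_1$: it introduces a six-element poset $\tilde P$ in which $x_1$ is split into two points $x_{1_a},x_{1_b}$, shows that the two-point inclusion $h\colon\thol\partial\Delta^1\to\tilde P$ is a cofibration by exhibiting it as a retract of $\thol m$ for the monomorphism $m\colon\partial\Delta^1\to\Delta^2$ (this is where the folding pictures enter), and then recovers $P_9$ together with $i_{x_1}$ by the pushout identifying $x_{1_a}$ with $x_{1_b}$; symmetry handles $i_{x_2}$ and Lemma~\ref{lem:imapycof} gives cofibrancy. You instead attach the chain $\cat{D}=x\to y$ to the four-element diamond $Q$ at its maximal element $y_1$, pushing out the minimum inclusion $\tel\to\cat{D}$, which is a cofibration by Theorem~\ref{thm:cacof}; then $\beta\colon Q\to P_9$ is a cofibration by stability of cofibrations under pushout, cofibrancy of $P_9$ follows from cofibrancy of $Q$ (Theorem~\ref{thm:posf}), and both minimum inclusions of $P_9$ are the composites $\beta\circ\iota_{x_k}$ of cofibrations. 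All the ingredients you invoke are indeed available: Theorem~\ref{thm:posf} does establish that both minimum inclusions into the diamond (the poset called $P_2$ in that proof) are cofibrations, stability under pushout requires nothing of the attaching map $\iota_{y_1}$, and your computation of the pushout is right---no alternating composites arise beyond $x_k\le y_1\le z$, since nothing maps out of $z$ and nothing of $\cat{D}$ maps into $y_1$ except the identity, so the pushout in $\Cat$ is already the poset $P_9$. Your argument is shorter and avoids the subdivision/retract machinery entirely; in effect it extends the paper's own ``simple pushout'' template (gluing $\cat{D}$ along its minimum to a cofibrant four-element poset) to the case where the attachment point is not a minimum of the receiving poset, which is presumably exactly why the paper excluded $P_9$ from its list of nine easy cases and handled it separately. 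What the paper's heavier method buys is uniformity: the retract-of-$\thol\Delta^n$ technique is the one that also works for $P_1$ through $P_8$, where no such one-point gluing onto an already known cofibrant poset is available.
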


\begin{proof}
	Let 
	\begin{equation*}
		\tilde P = \begin{tikzcd} 
		           	z\\
				y_1\ar[u]&y_2\\
				x_{1_a}\ar[u]&x_2\ar[ul]\ar[u]&x_{1_b}\ar[ul]
		           \end{tikzcd}
	\end{equation*}
	and
	\begin{align*}
		h\colon\thol\partial\Delta^1&\to\tilde P\text{,}\\
		\{\{0\}\}&\mapsto x_{1_a}\text{,}\\
		\{\{1\}\}&\mapsto x_{1_b}\text{.}
	\end{align*}
	Then $P_9$ is given by the pushout diagram
	\begin{equation*}
		\begin{tikzcd}
			\ar[d, "h"]\thol\partial\Delta^1\ar[r]&\tel\ar[d, "i_{x_1}"]\\
			\tilde P\ar[r]&P_9
		\end{tikzcd}\text{,}
	\end{equation*}
	where $i_{x_1}$ is the inclusion given by $i_{x_1}(0)=x_1$. Let furthermore
	\begin{align*}
		h\colon\partial\Delta^1&\to\Delta^2\text{,}\\
		0&\mapsto 0\text{,}\\
		1&\mapsto 1
	\end{align*}
	in $\sSet$. We obtain $h$ as a retract of $\thol m$ as indicated by the following diagram:
	\begin{equation*}
		\pninsdf\text{.}
	\end{equation*}
	Hence $h$ is a cofibration and thus, so is $i_{x_1}$ (and by symmetry, so is $i_{x_2}\colon\tel\to P_9$, given by $i_{x_2}(0)=x_2$). So by Lemma~\ref{lem:imapycof}, $P_9$ is cofibrant.
\end{proof}

Combining all of our previous results, we have proved the following theorem:

\begin{mythm}
	Every poset with five or less elements is cofibrant, and the respective inclusions of minima are cofibrations.
\end{mythm}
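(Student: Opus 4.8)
The plan is to assemble the statement from the results of this and the preceding sections, reducing throughout to connected posets and organizing the argument by the number of elements. First I would dispose of disconnectedness: every poset is the coproduct of its connected components, coproducts of cofibrant objects are cofibrant, and a minimal element lies in a single component. Hence, granting the connected case, an arbitrary poset with five or fewer elements is cofibrant, and the inclusion $\tel\to P$ of any minimum factors as $\tel\to\cat{C}\to P$, where $\cat{C}$ is the component containing that minimum; the first map is a cofibration by the connected case and the second is a cofibration because it is a pushout of $\iel\to\cat{C}'$ along $\iel\to\cat{C}$, with $\cat{C}'$ the coproduct of the remaining components.

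For connected posets with four or fewer elements there is nothing new: cofibrancy together with the cofibrancy of minima inclusions is precisely the content of Theorem~\ref{thm:lt3el} for three or fewer elements and of Theorem~\ref{thm:posf} for four elements.

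The five-element case carries the weight of the argument, and I would treat it through the classification of the $44$ connected five-element posets into four families. The $25$ semilattices are cofibrant by Theorem~\ref{thm:sliscof}, with cofibrant minima inclusions by Corollary~\ref{cor:slinccof}. A further $9$ arise by gluing the arrow category $\cat{D}=x\to y$ to a cofibrant four-element poset along a minimum; exactly as in the treatment of $P_1$ inside Theorem~\ref{thm:posf}, the gluing map and the relevant minimum inclusions are cofibrations by Theorem~\ref{thm:posf}, so both legs of the pushout are cofibrations and every minimum inclusion of the glued poset arises as one of the resulting composites. The poset $\thol\Delta^1$ is cofibrant, being the image under the left Quillen functor $\thol$ of the cofibrant simplicial set $\Delta^1$; its inclusion of the central minimum $\{\{0,1\}\}$ is realized as a cofibration by gluing two copies of the poset $\cat{E}$ from Theorem~\ref{thm:posf} along a common local minimum, while its remaining minima inclusions are covered by Theorem~\ref{thm:zziscof}, as $\thol\Delta^1$ is a zigzag. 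Finally, the nine exceptional posets $P_1,\dots,P_9$ have each been shown cofibrant with cofibrant minima inclusions in the lemmas and theorems of Subsection~\ref{sec:fivecof}.

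The main obstacle is not any single hard step but the completeness of the case analysis: one must verify that these four families genuinely exhaust all $44$ connected five-element posets, and in particular that the nine glued posets are correctly identified as pushouts of $\cat{D}$ along cofibrant four-element posets. This is exactly the bookkeeping that the individual lemmas were designed to discharge. Once the classification is granted, cofibrancy and the cofibrancy of the minima inclusions follow in every family directly from the cited results, and the theorem follows.
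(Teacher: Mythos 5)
Your proposal is correct and follows essentially the same route as the paper, whose proof of this theorem is precisely the assembly of the preceding results (Theorems~\ref{thm:lt3el} and~\ref{thm:posf}, the semilattice results \ref{thm:sliscof} and~\ref{cor:slinccof}, the nine simple pushouts, the $\thol\Delta^1$ case, and the lemmas for $P_1,\dots,P_9$), organized by the same four-family classification of the $44$ connected five-element posets. Your explicit reduction to the connected case via coproducts and the observation that $\thol\Delta^1$ is a zigzag (so Theorem~\ref{thm:zziscof} also covers its minima inclusions) are welcome refinements of details the paper leaves implicit, but they do not change the structure of the argument.
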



\section{Appendix}
The posets on the following pages are mostly computer generated with the data pulled from \emph{Chapel Hill Poset Atlas}. We denote the posets by their generating graphs. That means that we will only use anonymous nodes, instead of named objects and will only draw the minimal generating set of arrows, \ie those arrows, that are indecomposable.

Moreover, when arranging the objects, we put our focus on avoiding crossing arrows.
\subsection{Posets with 4 or less elements}
There is exactly one poset with one element, and one connected poset with two. 

There are three connected posets with three elements, and all three are semilattices:

\begin{center}
	\posthree.
\end{center}

There are ten connected posets with four elements, eight of those are semilattices, namely:

\begin{center}
	\latfour.
\end{center}

and the remaining two are the posets $P_1$ and $P_2$ from Theroem~\ref{thm:posf}:

\begin{center}
	\posfour.
\end{center}

\subsection{Posets with 5 Elements}
There are 44 connected posets with five elements, 25 of those are semilattices and listed below:

\begin{center}
\latfive.
\end{center}

Of the remaining 19, nine can be constructed by gluing the connected poset with two elements along its minimum to a cofibrant poset with four elements. Those are 

\begin{center}
\pushfive.
\end{center}

Of the remaining ten, one is $\thol\Delta^1$:

\begin{center}
\scalebox{.6}{\begin{tikzpicture}[every node/.style=my node, every path/.style=my path]
\node (a) at (1,0) {};
\node (b) at (2,0) {};
\node (c) at (0,0) {};
\node (d) at (1.5,-1) {};
\node (e) at (.5,-1) {};
\draw (a)--(d);
\draw (b)--(d);
\draw (a)--(e);
\draw (c)--(e);
\end{tikzpicture}}.
\end{center}

The remaining nine are the posets $P_1$ to $P_9$ from Section~\ref{sec:fivecof}.

\begin{center}
	\potpn.
\end{center}

\section{Acknowledgments}
We would like to thank Viktoriya Ozornova, who has been working on the same topic and provided us with the general idea on how to prove Lemma~\ref{lem:chaincof}, as well as the proofs of Lemma~\ref{lem:poiscof} and Lemma~\ref{lem:peiscof} and supported us with several fruitful discussions about the general topic.

\bibliography{references}{}

\newcommand{\etalchar}[1]{$^{#1}$}
\providecommand{\bysame}{\leavevmode\hbox to3em{\hrulefill}\thinspace}
\providecommand{\MR}{\relax\ifhmode\unskip\space\fi MR }
\providecommand{\MRhref}[2]{%
  \href{http://www.ams.org/mathscinet-getitem?mr=#1}{#2}
}
\providecommand{\href}[2]{#2}
\begin{thebibliography}{BMO{\etalchar{+}}15}

\bibitem[AM14]{am:ncat}
Dimitri Ara and Georges Maltsiniotis, \emph{Vers une structure de cat\'egorie
  de mod\`eles \`a la {T}homason sur la cat\'egorie des {$n$}-cat\'egories
  strictes}, Adv. Math. \textbf{259} (2014), 557--654. \MR{3197667}

\bibitem[BMO{\etalchar{+}}15]{bmoopy:gcat}
Anna~Marie Bohmann, Kristen Mazur, Ang{\'e}lica~M. Osorno, Viktoriya Ozornova,
  Kate Ponto, and Carolyn Yarnall, \emph{A model structure on {$G Cat$}}, Women
  in topology: collaborations in homotopy theory, Contemp. Math., vol. 641,
  Amer. Math. Soc., Providence, RI, 2015, pp.~123--134. \MR{3380072}

\bibitem[{Bru}15]{bruckner:amsosac}
Roman {Bruckner}, \emph{{A Model Structure On The Category Of Small Acyclic
  Categories}}, ArXiv e-prints (2015).

\bibitem[Cis99]{cisinski:dnpr}
Denis-Charles Cisinski, \emph{La classe des morphismes de {D}wyer n'est pas
  stable par retractes}, Cahiers Topologie G\'eom. Diff\'erentielle Cat\'eg.
  \textbf{40} (1999), no.~3, 227--231. \MR{1716777 (2000j:18008)}

\bibitem[FP10]{fiorepaoli:nfcat}
Thomas~M. Fiore and Simona Paoli, \emph{A {T}homason model structure on the
  category of small {$n$}-fold categories}, Algebr. Geom. Topol. \textbf{10}
  (2010), no.~4, 1933--2008. \MR{2728481 (2011i:18011)}

\bibitem[MSZ16]{msz:htep}
Peter {May}, Marc {Stephan}, and Inna {Zakharevich}, \emph{The homotopy theory
  of equivariant posets}, ArXiv e-prints (2016).

\bibitem[Rap10]{raptis:hotopo}
George Raptis, \emph{Homotopy theory of posets}, Homology, Homotopy Appl.
  \textbf{12} (2010), no.~2, 211--230. \MR{2721035 (2011i:18031)}

\bibitem[Tho80]{thomason:ccmc}
Robert~W. Thomason, \emph{Cat as a closed model category}, Cahiers Topologie
  G\'eom. Diff\'erentielle \textbf{21} (1980), no.~3, 305--324. \MR{591388
  (82b:18005)}

\end{thebibliography}
\bibliographystyle{amsalpha}

\end{document}